\documentclass[11pt,a4paper]{amsart}

\usepackage[T1]{fontenc}
\usepackage[utf8]{inputenc}
\usepackage{amsmath,amssymb,amsthm,mathtools}
\usepackage{geometry}
\usepackage{hyperref}
\usepackage{setspace}
\newtheorem{theorem}{Theorem}[section]
\newtheorem{proposition}[theorem]{Proposition}
\newtheorem{lemma}[theorem]{Lemma}
\newtheorem{corollary}[theorem]{Corollary}
\newtheorem{definition}[theorem]{Definition}
\newtheorem{remark}[theorem]{Remark}
\newtheorem{assumption}[theorem]{Assumption}

\newcommand{\T}{\mathbb T}
\newcommand{\R}{\mathbb R}
\newcommand{\C}{\mathbb C}
\newcommand{\N}{\mathbb N}
\newcommand{\Z}{\mathbb Z}
\newcommand{\PP}{\mathbb P}
\newcommand{\Leb}{\operatorname{Leb}}

\newcommand{\Gr}{\operatorname{Gr}}
\newcommand{\cE}{\mathcal E}

\newcommand{\norm}[1]{\left\lVert #1\right\rVert}
\newcommand{\gap}{\Delta_{\mathrm{sp}}}
\newcommand{\Prob}{\operatorname{Prob}}

\title{Analyticity of Lyapunov Exponents for Mixed
Markov--Quasi-Periodic Cocycles}

\author[E.H. Y. Tall]{El Hadji Yaya Tall}
\address{Centro de investigación de Matemática Pura y Aplicada, Universidad de Costa Rica. San José, Costa Rica.}
\email{el.tallL@ucr.ac.cr}
\date{\today}

\begin{document}
\begin{abstract}
We study the dependence of the top Lyapunov exponent of a mixed
Markov quasi-periodic cocycle on the transition probabilities.
The transition matrix is assumed primitive, the Markov extension on
the torus is assumed non-resonant along directed cycles, and the top
Lyapunov exponent is assumed simple.  We first prove the result in the
irreducible case by a forward Markov transfer operator and contraction
estimates uniform over the initial Markov state.  The reducible case is
then obtained, as in the Bernoulli argument of
Bezerra-Sánchez-Tall, by decomposing along measurable invariant
sections into restricted and quotient bundle cocycles and inducting on
the fiber dimension.  A measurable invariant section need not admit a
continuous trivialization, even when the original matrices cocycle are
continuous.  We therefore formulate the irreducible analytic case for
essentially bounded measurable bundle cocycles with essentially bounded
inverses
\end{abstract}
\maketitle
\tableofcontents
\section{Introduction}

One of the central questions in the theory of linear cocycles is the
\emph{regularity problem}: how do the Lyapunov exponents depend on the
cocycle and on the driving dynamical system?  Besides measuring the
asymptotic growth of matrix products, Lyapunov exponents play a basic
role in smooth ergodic theory, random dynamical systems, and the
spectral theory of random and quasi-periodic operators.  Their
dependence on parameters has therefore been studied since
Furstenberg's foundational work on noncommuting random
products~\cite{Furstenberg63}.

For random matrix products, Furstenberg and Kifer~\cite{FK} and
Hennion~\cite{Hennion84} established early continuity results under
irreducibility-type hypotheses.  Bocker and Viana~\cite{BV17}
proved continuity for compactly supported random
\(\operatorname{GL}(2)\)-cocycles, and Malheiro and Viana~\cite{MV}
developed the corresponding stationary measure vector method for
finite-state Markov shifts.  Continuity results were subsequently
extended to cocycles with invariant holonomies by
Backes, Brown and Butler~\cite{BBB18}, while
Avila, Eskin and Viana~\cite{AEV23} obtained continuity for compactly
supported random products in arbitrary dimension.  The compactness
assumption is significant: Sánchez and Viana~\cite{SV20} show that in the
non-compactly supported setting continuity may fail.

Quantitative regularity is more delicate.  Le~Page~\cite{LePage}
proved Hölder continuity of the top exponent under strong
irreducibility and contraction assumptions.  In dimension two,
Tall--Viana~\cite{TV20} obtained pointwise Hölder continuity at every
compactly supported distribution with simple spectrum and a
log-Hölder modulus without the simplicity assumption. For finitely
supported measures with distinct exponents, Duarte and Klein~\cite{DK20} obtained, weak-Hölder continuity. More recently, Araújo~\cite{Araujo26} extended the pointwise Hölder conclusion of
Tall--Viana to random products in arbitrary fiber dimension, assuming
that the top exponent is simple and that the associated equator has dimension at most one.  Here the equator is the maximal invariant subspace on which the asymptotic growth is strictly smaller than the top exponent.

A parallel theory concerns quasi-periodic cocycles over irrational
translations of the torus, which arise naturally in the study of
quasi-periodic Schrödinger operators.  Regularity in this deterministic
setting is notably rigid and delicate; see, among others,
Avila, Jitomirskaya and Sadel~\cite{AJS14} and
Duarte and Klein~\cite{DK14,DK19}.  Mixed random quasi-periodic cocycles
were introduced by Cai, Duarte and Klein~\cite{CDKMixed} to combine
random selection with quasi-periodic motion in the base.  Their later
Furstenberg theory~\cite{CDK} provides uniform directional growth and
projective contraction estimates for the Bernoulli version of this
model.

Analytic dependence has a distinct history.  Ruelle~\cite{Ruelle}
used transfer operator methods to prove analyticity under suitable
hyperbolicity assumptions.  Peres~\cite{Peres} proved analytic
dependence on transition probabilities for finite random matrix
products, including Markov dependence under the appropriate
irreducibility and simplicity hypotheses.  More recently,
Amorim, Durães and Melo~\cite{ADM26} extended this circle of ideas to
compact, possibly infinite, symbol spaces using perturbation theory
for Markov operators on Banach spaces.  In the mixed
random quasi-periodic setting, Bezerra, Sánchez and Tall~\cite{BST}
proved real-analytic dependence on the Bernoulli probabilities when
the top Lyapunov exponent is simple.

The purpose of this work is to combine the Markov and mixed
random quasi-periodic directions.  Let \(X=\{1,\dots,N\}\), let
\(P=(p_{ij})\) be a finite-state transition matrix, and choose
translations \(\theta_i\in\T^m\) and continuous maps
\[
 A_i:\T^m\longrightarrow\operatorname{GL}_d(\R).
\]
On the one-sided Markov shift, consider
\[
 F_{\mathbf A}(x,t,v)
 =\bigl(\sigma x,t+\theta_{x_0},A_{x_0}(t)v\bigr).
\]
We keep \(\mathbf A=(A_i)_i\), the translations
\(\boldsymbol\theta=(\theta_i)_i\), and the support of \(P\) fixed,
and study the dependence of the top Lyapunov exponent on the positive
transition probabilities.

Two new features distinguish this problem from the Bernoulli model.
First, the transfer operator and every stationary object must retain
the current Markov state.  A single stationary projective measure is
replaced by a finite vector of conditional measures, and estimates
must remain uniform after conditioning on the initial state.  Second,
ergodicity of the torus extension is no longer detected by an
individual translation: the exact obstruction is a Fourier resonance
along an admissible directed cycle.  We therefore impose a cycle
non-resonance condition and prove that it is equivalent to ergodicity
of the stationary Markov torus skew product.

Our main theorem is local in the relative interior of a fixed support
face.  Under primitivity, cycle non-resonance, and simplicity of the
top exponent, it gives a holomorphic extension of the top Lyapunov
exponent to a complex neighborhood of the reference transition
matrix.  No irreducibility assumption is imposed in the final
statement.  We first prove an irreducible core using conditional
projective contraction and a direct Weierstrass argument.  Reducible
cocycles are then decomposed along state indexed measurable invariant
families into restricted and quotient bundle cocycles.  Simplicity
selects a unique locally dominant block, and induction on the fiber
dimension completes the proof.  Because measurable invariant families
need not possess continuous trivializations, the terminal irreducible
case is formulated for essentially bounded measurable bundle
cocycles with essentially bounded inverses.

Keeping the support fixed is essential for this reduction: the same
invariant family then remains invariant for every nearby stochastic
matrix. The proof is also deliberately direct. 
Finite transfer iterates are polynomial in the complexified transition entries,
projective contraction makes the logarithmic increments locally
uniformly Cauchy, and the Weierstrass theorem produces the holomorphic
limit.

\subsection*{Organization of the paper}

Section~2 describes the Markov base.  Its main purpose is
to identify the exact obstruction to ergodicity of the torus
extension. 
Section~3 introduces the cocycle and states the main result.  In Section~4 we replace the stationary contraction average used in the Bernoulli case by an estimate that is uniform over the initial Markov state.  This estimate is also taken uniformly over the torus point; both uniformities are essential for the submultiplicative and Cauchy
arguments.  The proof is organized as a finite state Markov adaptation
of the uniform averaged-convergence mechanism developed by
Cai, Duarte and Klein for Bernoulli mixed random quasi-periodic
cocycles~\cite{CDK}.
Sections~5 and~6 construct the complex transfer operator and a
uniformly convergent sequence of holomorphic logarithmic increments.
  
Section~7 carries out
the reducible induction in the Markov bundle setting.  It also records
the measurable bounded version of the irreducible case needed after
restriction and passage to a quotient; this is a separate argument,
not a formal consequence of proof for continuous cocycles.

\subsection*{Standing conventions}

All vector spaces are finite dimensional and endowed with Euclidean
norms.  Haar probability measure on \(\T^m\) is denoted by \(\Leb\).
Statements involving projective points are independent of the choice
of non-zero representatives.  A neighborhood inside \(H(U)\) always

\section{Statement and main results}
\subsection{The stationary Markov measure}

Let \(\Sigma=X^\N\) and let \(\sigma:\Sigma\to\Sigma\) be the
left shift.  A stochastic matrix \(P\) is called \emph{primitive} if
there exists \(n_0\ge1\) for which every entry of \(P^{n_0}\) is
strictly positive.

For a primitive \(P\), let \(\pi(P)\) be the unique stationary row
vector:
\[
 \pi(P)P=\pi(P),\qquad
 \pi_i(P)>0,\qquad
 \sum_i\pi_i(P)=1.
\]
The stationary Markov measure \(\mu_P\) is determined by
\[
 \mu_P([i_0,\dots,i_n])
 =\pi_{i_0}(P)p_{i_0i_1}\cdots p_{i_{n-1}i_n}.
\]

For each \(i\in X\), set
\[
 \Sigma_i=\{x\in\Sigma:x_0=i\}
\]
and denote by \(\mu_{P,i}\) the Markov path measure conditioned on
the initial state \(i\):
\[
 \mu_{P,i}:=\mu_P(\,\cdot\mid x_0=i).
\]
Thus, if \(i_0=i\),
\begin{equation}\label{eq:conditional-path-measure}
 \mu_{P,i}([i_0,\ldots,i_n])
 =
 p_{i_0i_1}\cdots p_{i_{n-1}i_n}.
\end{equation}
All integrals over \(\Sigma_i\) below are taken with respect to this
probability measure.  For a different transition matrix \(Q\), the
analogous conditional path measure is denoted by \(\mu_{Q,i}\).

\begin{lemma}\label{lem:stationary-analytic}
The map \(P\mapsto\pi(P)\) is real-analytic on every relative open
set of irreducible stochastic matrices with fixed support.
\end{lemma}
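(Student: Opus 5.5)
We prove the lemma by exhibiting \(\pi(P)\) as the unique solution of a linear system whose coefficients are affine in the entries of \(P\). We then write it as a quotient of polynomials with nonvanishing denominator. Fix a support \(S\subset X\times X\) and let \(\mathcal P_S\) be the set of irreducible stochastic matrices with \(p_{ij}>0\) exactly for \((i,j)\in S\). This set is relatively open in the affine subspace \(\{p_{ij}=0 \text{ for } (i,j)\notin S,\ \sum_j p_{ij}=1\}\), so real-analyticity means analyticity in the free coordinates \((p_{ij})_{(i,j)\in S}\) subject to the row-sum constraints. Equivalently, it suffices to show that \(\pi\) extends analytically to an open neighborhood in the ambient space of matrices supported on \(S\).

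\textbf{Step 1 (one-dimensional kernel).} For irreducible stochastic \(P\), Perron--Frobenius gives that \(1\) is a simple eigenvalue of \(P\). Hence \(\ker(I-P^{T})\) is one-dimensional and spanned by a strictly positive vector. Replace one equation of \(\pi(I-P)=0\) by the normalization \(\sum_i\pi_i=1\). Concretely, let \(M(P)\) be the \(N\times N\) matrix obtained from \(I-P\) by replacing its last column by the all-ones vector. Then \(\pi(P)\) is characterized by
\[
\pi(P)\,M(P)=e_N .
\]
Dropping one column loses no information: the columns of \(I-P\) sum to zero because the rows of \(P\) sum to \(1\), so the omitted equation is implied by the others.

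\textbf{Step 2 (invertibility of \(M(P)\)).} This is the only point requiring care. Suppose \(vM(P)=0\). Then \(v\sum_j\) (all columns of \(I-P\)) \(=0\) automatically. The first \(N-1\) equations therefore give \(v(I-P)=0\) entirely, so \(v=c\,\pi(P)\) by simplicity of the eigenvalue. The last column then gives \(c\sum_i\pi_i=0\), hence \(c=0\). Thus \(\det M(P)\neq 0\) on \(\mathcal P_S\).

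\textbf{Step 3 (analyticity).} By Cramer's rule,
\[
\pi(P)=e_N\,M(P)^{-1},
\]
whose entries are polynomials in the \(p_{ij}\) divided by \(\det M(P)\), itself a polynomial. Since the determinant does not vanish on \(\mathcal P_S\), it is nonzero on an open complex neighborhood of each point. The formula therefore gives a holomorphic, in particular real-analytic, extension of \(\pi\). This extension also serves later for complexified transition matrices. One could alternatively invoke the Markov chain tree theorem, which expresses \(\pi_i\) as a ratio of sums of products of entries over spanning arborescences with a positive denominator. The linear-algebra route above is shorter.

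The main obstacle, mild here, is Step 2: ensuring that the normalized system is nondegenerate. It uses exactly irreducibility, through the simplicity of the Perron eigenvalue. Primitivity is not needed for this lemma.
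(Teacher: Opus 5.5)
Your proposal is correct and takes essentially the same route as the paper: replace one redundant equation of $\pi(I-P)=0$ by the normalization $\sum_i\pi_i=1$, get invertibility from simplicity of the eigenvalue $1$, and conclude by Cramer's rule. Your column replacement in $I-P$ is just the transpose of the paper's row replacement in $P^T-I$, and your Step 2 spells out the invertibility argument that the paper only asserts.
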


\begin{proof}
The vector \(\pi(P)^T\) is the unique solution of
\[
 (P^T-I)\pi^T=0,\qquad \mathbf1^T\pi^T=1.
\]
Replace one redundant row of \(P^T-I\) by \(\mathbf1^T\).  The
resulting square matrix is invertible because the eigenvalue \(1\) is
simple.  Cramer's rule expresses each coordinate of \(\pi(P)\) as a
rational function of the free entries of \(P\), with non-vanishing
denominator near the given matrix.
\end{proof}

\subsection{Cycle non-resonance}

Define
\[
 F(x,t)=(\sigma x,t+\theta_{x_0}).
\]
The measure \(\mu_P\times\Leb\) is \(F\)-invariant.  Ergodicity of this
extension is not implied merely by the irrationality of one
translation.  The correct condition must take the transition graph
into account.

For \(k\in\Z^m\), introduce the diagonal unitary matrix
\[
 D_k=\operatorname{diag}
 \left(
 e^{2\pi i\langle k,\theta_1\rangle},\ldots,
 e^{2\pi i\langle k,\theta_N\rangle}
 \right)
\]
and the twisted transition matrix
\[
 P_k=D_kP.
\]

\begin{theorem}\label{thm:ergodicity-criterion}
Suppose \(P\) is irreducible.  The following conditions are equivalent.
\begin{enumerate}
 \item The skew product
 \[
 F(x,t)=(\sigma x,t+\theta_{x_0})
 \]
 is ergodic with respect to \(\mu_P\times\Leb\).
 \item For every \(0\ne k\in\Z^m\),
 \[
 1\notin\operatorname{spec}(D_kP).
 \]
 \item For every \(0\ne k\in\Z^m\), there is an admissible directed
 cycle \(c=(i_0,\ldots,i_{\ell-1},i_\ell=i_0)\) such that
 \[
 \left\langle k,\sum_{r=0}^{\ell-1}\theta_{i_r}\right\rangle
 \notin\Z.
 \]
\end{enumerate}
\end{theorem}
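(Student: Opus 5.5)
The plan is to prove (1)\(\Leftrightarrow\)(2) by Fourier expansion in the torus variable together with a Markov conditioning argument, and (2)\(\Leftrightarrow\)(3) by a combinatorial analysis of the unimodular eigenvectors of \(D_kP\). Throughout I use that \(\sigma\) is ergodic for \(\mu_P\) when \(P\) is irreducible (standard).

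\emph{(1)\(\Leftrightarrow\)(2).} Let \(f\in L^2(\mu_P\times\Leb)\) be \(F\)-invariant and expand
\[
f(x,t)=\sum_k f_k(x)e^{2\pi i\langle k,t\rangle}.
\]
Invariance gives, for every \(k\),
\[
f_k(x)=e^{2\pi i\langle k,\theta_{x_0}\rangle}f_k(\sigma x).
\]
For \(k=0\) this says \(f_0\) is \(\sigma\)-invariant, hence constant. So ergodicity fails exactly when some \(k\ne0\) admits a nonzero solution \(f_k\in L^2(\mu_P)\).

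Suppose such an \(f_k\) exists. Then \(|f_k|\) is \(\sigma\)-invariant, so I normalize \(|f_k|\equiv1\). Iterating the relation gives
\[
f_k(x)=e^{2\pi i\langle k,\theta_{x_0}+\dots+\theta_{x_{n-1}}\rangle}f_k(\sigma^nx).
\]
By the Markov property, conditioning on \(x_0,\dots,x_n\) yields
\[
E[f_k\mid x_0,\dots,x_n]=e^{2\pi i\langle k,\sum_{r<n}\theta_{x_r}\rangle}\,h(x_n),\qquad h(j):=\int_{\Sigma_j}f_k\,d\mu_{P,j}.
\]
By martingale convergence these conditional expectations tend to \(f_k\) in \(L^2\). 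Hence \(\int|h(x_n)|^2\,d\mu_P=\sum_j\pi_j|h(j)|^2\to1\). Since \(\pi_j>0\) and \(|h(j)|\le1\), this forces \(|h(j)|=1\) for all \(j\); in particular \(h\neq0\). Next I condition the one-step relation on \(x_0=i\) and use \eqref{eq:conditional-path-measure}, which gives
\[
h(i)=e^{2\pi i\langle k,\theta_i\rangle}\sum_j p_{ij}h(j),
\]
that is, \(D_kPh=h\). So (2) fails.

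Conversely, suppose \(D_kPh=h\) with \(h\ne0\) and \(k\neq0\). Choose \(i\) with \(|h_i|\) maximal. The inequality \(|h_i|\le\sum_jp_{ij}|h_j|\) together with irreducibility shows that \(|h|\) is constant. Equality in the triangle inequality then forces
\[
h_i=e^{2\pi i\langle k,\theta_i\rangle}h_j\quad\text{whenever }p_{ij}>0.
\]
With this edge relation, \(f(x,t)=h(x_0)e^{2\pi i\langle k,t\rangle}\) is \(F\)-invariant, by a direct check. It is nonconstant because \(k\ne0\), so (1) fails.

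\emph{(2)\(\Leftrightarrow\)(3).} By the previous paragraph, \(1\in\operatorname{spec}(D_kP)\) if and only if there is a unimodular \(h\) satisfying the edge relation \(h_i=\lambda_ih_j\) on every admissible edge \(i\to j\), where \(\lambda_i=e^{2\pi i\langle k,\theta_i\rangle}\). If such an \(h\) exists, multiplying the relations around any admissible cycle gives \(\prod_r\lambda_{i_r}=1\), i.e. \(\langle k,\sum_r\theta_{i_r}\rangle\in\Z\), so (3) fails for this \(k\).

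Conversely, assume every admissible cycle is resonant for \(k\). Every closed walk decomposes into cycles, so every closed walk is resonant as well. Fix a root \(i_*\) and define \(h_j\) as the inverse of the product of the \(\lambda\)'s along any admissible path from \(i_*\) to \(j\). Such paths exist by irreducibility. To see that \(h_j\) is well defined, take two paths from \(i_*\) to \(j\) and close each of them with one fixed return path from \(j\) to \(i_*\); both closed walks are resonant, so the two path products coincide. The edge relation follows by extending paths by one edge, so \(D_kPh=h\) and (2) fails.

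The main obstacle is the measurable step in (1)\(\Rightarrow\)(2): showing that a general \(L^2\) eigenfunction produces a genuine eigenvector of \(D_kP\) with \(h\ne0\). The martingale argument above, using positivity of \(\pi\), is what I would use to get \(h\neq0\); the rest is finite-dimensional.
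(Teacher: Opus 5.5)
Your proof is correct and follows the same route as the paper. Both arguments take Fourier coefficients in \(t\) conditioned on the present state to get a fixed vector of \(D_kP\). Both use the maximum-modulus argument to show \(|h|\) is constant and \(h_i=e^{2\pi i\langle k,\theta_i\rangle}h_j\) on every admissible edge. Both prove the converse with the path-product construction, made well defined by a common return path.

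Your martingale step goes further than the paper: it shows that a nonzero \(f_k\) forces \(|h(j)|=1\) for every \(j\), and your explicit invariant function \(h(x_0)e^{2\pi i\langle k,t\rangle}\) gives the converse directly. Together these supply what the paper's (1)\(\Leftrightarrow\)(2) paragraph leaves implicit, since an invariant function may a priori depend on the whole path and the paper never checks that its conditioned coefficients \(a(k)\) are nonzero.
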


\begin{remark}[Relation with the Bernoulli criterion]
In the Bernoulli case the transition matrix has identical rows,
\(P=\mathbf e p^T\), where \(p=(p_1,\ldots,p_N)^T\).  Therefore
\(D_kP=(D_k\mathbf e)p^T\) has rank one, and its only possibly
non-zero eigenvalue is
\[
 p^TD_k\mathbf e
 =
 \sum_{i=1}^N p_i
 e^{2\pi i\langle k,\theta_i\rangle}.
\]
Since the coefficients \(p_i\) are positive, this eigenvalue is equal
to \(1\) if and only if
\[
 \langle k,\theta_i\rangle\in\mathbb Z
 \qquad\text{for every }i.
\]
Thus Theorem~\ref{thm:ergodicity-criterion} reduces to the
Bernoulli criterion of Cai--Duarte--Klein
\cite[Proposition~2.1 and Theorem~2.3]{CDKMixed}: for every
\(k\in\mathbb Z^m\setminus\{0\}\), there must exist a frequency
\(\theta_i\) in the support of the Bernoulli law such that
\[
 \langle k,\theta_i\rangle\notin\mathbb Z.
\]
The cycle formulation is the corresponding extension to a 
Markov support graph, in which one-symbol loops need not be
admissible.
\end{remark}

\begin{proof}

\smallskip
\noindent $(1) \iff (2)$:
let \(h\in L^2(\mu_P\times\Leb)\) be \(F\)-invariant.  Disintegrating
with respect to the present state and taking Fourier coefficients in
the torus variable gives, for each \(k\in\Z^m\), a vector
\[
 a(k)=(a_1(k),\ldots,a_N(k))^T\in\C^N.
\]
The invariance equation says
\[
 a_i(k)
 =e^{2\pi i\langle k,\theta_i\rangle}
   \sum_{j=1}^Np_{ij}a_j(k),
 \qquad i=1,\ldots,N,
\]
or equivalently
\[
 a(k)=D_kPa(k).
\]
For \(k=0\), irreducibility of the stationary Markov shift implies
that the fixed vectors are constant.  Therefore all \(F\)-invariant
functions are constant exactly when \(D_kP\) has no fixed vector for
every \(k\ne0\).  This proves the equivalence of (1) and (2).

\smallskip
\noindent $(3) \implies (2)$:
suppose \(a=D_kPa\) with \(a\ne0\).  Let
\[
 R=\max_i|a_i|
\]
and choose \(i\) with \(|a_i|=R\).  Then
\[
 R=|a_i|
 \le\sum_jp_{ij}|a_j|\le R.
\]
Every inequality is an equality.  Hence \(|a_j|=R\) for every
successor \(j\) of \(i\), and all complex numbers
\[
 e^{2\pi i\langle k,\theta_i\rangle}a_j
\]
with \(p_{ij}>0\) have the same argument.  Irreducibility propagates
this conclusion through the whole graph.  Thus \(R>0\), all
\(|a_i|=R\), and for every admissible edge \(i\to j\),
\[
 a_i=e^{2\pi i\langle k,\theta_i\rangle}a_j.
\]
Multiplying these identities around a cycle
\(i_0\to\cdots\to i_{\ell-1}\to i_0\) yields
\[
 1=
 e^{2\pi i\langle k,\theta_{i_0}+\cdots+\theta_{i_{\ell-1}}\rangle}.
\]
Thus every directed cycle is resonant.

\smallskip
\noindent $(2) \implies (3)$:
conversely, suppose all directed cycles are resonant for a fixed
\(k\ne0\).  Fix a reference vertex \(i_*\) and set \(a_{i_*}=1\).
For any vertex \(j\), choose an admissible path
\[
 i_*=i_0\to i_1\to\cdots\to i_s=j
\]
and define
\[
 a_j=
 \exp\left(
 -2\pi i\left\langle k,
 \sum_{r=0}^{s-1}\theta_{i_r}\right\rangle
 \right).
\]
This does not depend on the chosen path.  Indeed, irreducibility
allows each of two candidate paths to be closed by a path from \(j\)
back to \(i_*\); the quotient of the two definitions is the phase
accumulated around a pair of directed cycles, and both phases equal
one.  For every edge \(i\to j\) we consequently have
\[
 a_i=e^{2\pi i\langle k,\theta_i\rangle}a_j.
\]
Since all successors of \(i\) give the same \(a_j\),
\[
 e^{2\pi i\langle k,\theta_i\rangle}
 \sum_jp_{ij}a_j=a_i.
\]
Therefore \(a=D_kPa\), so \(1\in\operatorname{spec}(D_kP)\).
\end{proof}

\begin{assumption}[Cycle non-resonance]\label{ass:cycle}
For every \(0\ne k\in\Z^m\), there is a directed cycle
\[
 c=(i_0,i_1,\dots,i_{\ell-1},i_\ell=i_0)
\]
in the support graph of \(P\) such that
\[
 \left\langle k,\sum_{r=0}^{\ell-1}\theta_{i_r}\right\rangle
 \notin\Z.
\]
\end{assumption}

\begin{proposition}\label{prop:base-ergodic}
If \(P\) is irreducible and Assumption~\ref{ass:cycle} holds, then
\((F,\mu_P\times\Leb)\) is ergodic.
\end{proposition}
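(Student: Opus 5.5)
This is an immediate consequence of Theorem~\ref{thm:ergodicity-criterion}. Assumption~\ref{ass:cycle} is, word for word, condition~(3) of that theorem: the directed cycles in the support graph of \(P\) are exactly the admissible directed cycles. Since \(P\) is irreducible, the theorem applies, and the implication \((3)\Rightarrow(2)\Rightarrow(1)\) yields ergodicity of \(F\) with respect to \(\mu_P\times\Leb\).

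The plan is to record this reduction, and also to make sure that the two implications actually used are fully justified, since the proof of Theorem~\ref{thm:ergodicity-criterion} is somewhat compressed at these points.

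For \((3)\Rightarrow(2)\), I would argue by contradiction. Suppose \(a=D_kPa\) with \(a\ne0\) for some \(k\ne0\). The maximum-modulus argument forces \(|a_j|=R>0\) for all \(j\), using irreducibility to reach every vertex from a maximizing one. It also forces equality of arguments in the convex combination \(\sum_j p_{ij}a_j\). Since all \(|a_j|=R\) and \(|\sum_jp_{ij}a_j|=R\), strict convexity of the disc gives that all \(a_j\) with \(p_{ij}>0\) coincide, and equal \(e^{-2\pi i\langle k,\theta_i\rangle}a_i\). Telescoping around any cycle then gives resonance, which contradicts the assumed non-resonant cycle.

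For \((2)\Rightarrow(1)\), the step I expect to need the most care is the Fourier disintegration. Let \(h\in L^2(\mu_P\times\Leb)\) be invariant, \(h\circ F=h\). Let \(\mathcal F_0\) be the \(\sigma\)-algebra generated by \(x_0\) and \(t\), and set \(h_0=\mathbb E[h\mid\mathcal F_0]\). I would reduce to functions depending only on \((x_0,t)\) as follows. By invariance, \(h=h\circ F^n\), and \(h\) is measurable with respect to the future \(\sigma\)-algebra of \((x_n,\,t+\theta_{x_0}+\dots+\theta_{x_{n-1}})\). The Markov property then shows that \(h_0\) satisfies \(h_0(i,t)=\sum_j p_{ij}h_0(j,t+\theta_i)\). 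Fourier expansion gives \(a(k)=D_kPa(k)\), so (2) forces \(a(k)=0\) for \(k\ne0\), while \(k=0\) gives a constant by Perron--Frobenius.

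To conclude that \(h\) itself is constant, I would apply the same argument to \(\mathbb E[h\mid\mathcal F_{0..n}]\), conditioning on \(x_0,\dots,x_n,t\). Alternatively, one can apply the martingale convergence theorem to \(\mathbb E[h\circ F^n\mid\mathcal F_0]\circ\) shifts: these conditional expectations are all constant and converge to \(h\), so \(h\) is constant. I would cite this reverse-martingale step rather than redo it, since it is standard for skew products over Markov shifts. Alternatively, one can cite the equivalence \((1)\Leftrightarrow(2)\) exactly as proved in Theorem~\ref{thm:ergodicity-criterion}.
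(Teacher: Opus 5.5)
Your proposal is correct and follows the paper's own route: the paper proves the proposition in one line by invoking the implication (3)\(\Rightarrow\)(1) of Theorem~\ref{thm:ergodicity-criterion}, exactly as you do. Your added verification of (2)\(\Rightarrow\)(1) supplies a step the paper leaves implicit (its Fourier expansion treats \(h\) as if it depended only on \((x_0,t)\)); the clean form of your fix is that invariance and the Markov property give \(\mathbb E[h\mid x_0,\dots,x_n,t]=h_0\circ F^n\), which is constant and converges to \(h\) by the ordinary forward martingale convergence theorem, not a reverse-martingale argument.
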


\begin{proof}
This is the implication (3)\(\Rightarrow\)(1) in
Theorem~\ref{thm:ergodicity-criterion}.
\end{proof}

\begin{corollary}\label{cor:self-loop}
Suppose \(P\) is irreducible and \(p_{jj}>0\) for some state \(j\).
If \(\theta_j\) is rationally independent, then
\((F,\mu_P\times\Leb)\) is ergodic.
\end{corollary}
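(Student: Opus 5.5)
The plan is to verify Assumption~\ref{ass:cycle} using the one-edge cycle at \(j\), and then invoke Proposition~\ref{prop:base-ergodic}. Since \(p_{jj}>0\), the self-loop \(c=(j,j)\), i.e.\ \(\ell=1\) with \(i_0=i_1=j\), is an admissible directed cycle in the support graph of \(P\). Its accumulated translation is simply \(\sum_{r=0}^{0}\theta_{i_r}=\theta_j\).

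Fix \(0\ne k\in\Z^m\). We must show \(\langle k,\theta_j\rangle\notin\Z\). Here "\(\theta_j\) rationally independent" is read in the standard sense for torus translations. Writing \(\theta_j=(\theta_j^{(1)},\dots,\theta_j^{(m)})\), the numbers \(1,\theta_j^{(1)},\dots,\theta_j^{(m)}\) are linearly independent over \(\mathbb Q\); equivalently, the translation by \(\theta_j\) is ergodic on \(\T^m\).

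Suppose, to the contrary, that \(\langle k,\theta_j\rangle=n\in\Z\). Then
\[
\sum_{s=1}^m k_s\theta_j^{(s)}-n\cdot1=0
\]
is a nontrivial rational relation, since some \(k_s\ne0\). This contradicts independence. So the self-loop is a non-resonant cycle for every \(k\ne0\), and Assumption~\ref{ass:cycle} holds.

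Since \(P\) is irreducible, Proposition~\ref{prop:base-ergodic}, which is the implication (3)\(\Rightarrow\)(1) of Theorem~\ref{thm:ergodicity-criterion}, then gives ergodicity of \((F,\mu_P\times\Leb)\).

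There is no real obstacle here. The only point needing care is the interpretation of the hypothesis. It must include the constant \(1\), i.e.\ mean that \(\langle k,\theta_j\rangle\notin\Z\) for all \(k\ne0\), rather than mere \(\mathbb Q\)-independence of the coordinates. I would state this convention explicitly in the proof. One can also phrase the argument through item (2) of the theorem, but that is unnecessary.
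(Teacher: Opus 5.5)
Your proposal is correct and follows essentially the same route as the paper: the self-loop at \(j\) is an admissible one-edge cycle with displacement \(\theta_j\); rational independence then gives \(\langle k,\theta_j\rangle\notin\Z\) for every \(k\ne0\); and the implication (3)\(\Rightarrow\)(1) of Theorem~\ref{thm:ergodicity-criterion} yields ergodicity. Your explicit remark that the independence hypothesis must include the constant \(1\) is a worthwhile clarification, since the paper uses that reading only implicitly.
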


\begin{proof}
The self-loop \(j\to j\) is a directed cycle whose displacement is
\(\theta_j\).  For \(k\ne0\), rational independence gives
\(\langle k,\theta_j\rangle\notin\Z\).  Apply
Theorem~\ref{thm:ergodicity-criterion}.
\end{proof}

\begin{remark}
If \(P\) is strictly positive, every state has a self-loop.  In that
common special case, the original and simpler assumption that at least
one translation is rationally independent is sufficient.  The full
cycle criterion is needed only because a primitive matrix may contain
zeros and the irrational state may fail to lie on a one-edge cycle.
\end{remark}

\section{Cocycles and the main result}
For \(x=(x_n)_{n\ge0}\), define
\[
 t_n=t+\sum_{r=0}^{n-1}\theta_{x_r},\qquad
 A_x^n(t)=A_{x_{n-1}}(t_{n-1})\cdots A_{x_0}(t).
\]
By Oseledets' theorem, the ergodic system of
Proposition~\ref{prop:base-ergodic} has Lyapunov exponents
\[
 \lambda_1(P)\ge\cdots\ge\lambda_d(P).
\]
We write \(\lambda_+(P)=\lambda_1(P)\) and
\(\gap=\lambda_1(P)-\lambda_2(P)\).

Because the symbol set and the torus are compact and each \(A_i\) is
continuous with values in \(\operatorname{GL}_d(\R)\), both
\[
 \log^+\norm{A_{x_0}(t)}
 \quad\text{and}\quad
 \log^+\norm{A_{x_0}(t)^{-1}}
\]
are bounded.  Thus all integrability assumptions in the invertible
version of Oseledets' theorem are automatic.  For almost every
\((x,t)\), there is a filtration
\[
 \{0\}=V_{\ell+1}(x,t)\subsetneq V_\ell(x,t)
 \subsetneq\cdots\subsetneq V_1(x,t)=\R^d
\]
such that every non-zero
\(v\in V_r(x,t)\setminus V_{r+1}(x,t)\) satisfies
\[
 \lim_{n\to\infty}\frac1n\log\norm{A_x^n(t)v}
 =\lambda_r(P),
\]
where repetitions are included according to multiplicity.
\begin{definition}\label{def:irreducible}
The Markov family
\((\mathbf A,\boldsymbol\theta,U)\) is \emph{irreducible} if there is
no collection of measurable maps
\[
 \mathcal V_i:\T^m\to\Gr(k,d),\qquad i\in X,\quad 0<k<d,
\]
satisfying
\[
 A_i(t)\mathcal V_i(t)=\mathcal V_j(t+\theta_i)
\]
for every admissible edge \(i\to j\) and for
Lebesgue-almost every \(t\).
\end{definition}

Kifer's non-random filtration for random bundle maps leads to the
weaker notion of \emph{quasi-irreducibility}: there is no proper
measurable invariant subbundle whose Lyapunov exponent is strictly
smaller than \(\lambda_1(P)\); see
\cite[Chapter~III]{Kifer}.  This terminology is also used in the
Bernoulli formulation of Cai--Duarte--Klein
\cite[Definition~3.4]{CDK}.  In the present Markov setting the
subbundles are state-indexed and their invariance equations are
imposed only along admissible edges.  Irreducibility implies
quasi-irreducibility.  We keep the stronger hypothesis in the main
irreducible theorem to avoid an additional layer of notation, but the
proof of the uniform-growth lemma below uses only the corresponding
Markov quasi-irreducibility property.

Let \(U=(u_{ij})\) be the support matrix of \(P\), and put
\[
 \cE=\{(i,j):u_{ij}=1\}.
\]
We shall also use a measurable category that is stable under
restriction to invariant families and passage to quotients.  For
\(r\geq1\), let \(\mathcal B_r(U)\) consist of collections

\[
 \mathbf B=(B_{ij})_{(i,j)\in\cE},
 \qquad
 B_{ij}:\mathbb T^m\longrightarrow\operatorname{GL}_r(\mathbb R),
\]
whose entries are measurable and satisfy
\begin{equation}\label{eq:measurable-bounded-class}
 \operatorname*{ess\,sup}_{(i,j)\in\cE,\,t\in\mathbb T^m}
 \max\bigl\{\|B_{ij}(t)\|,\|B_{ij}(t)^{-1}\|\bigr\}<\infty.
\end{equation}
The map attached to the edge \(i\to j\) sends the fiber over
\((i,t)\) to the fiber over \((j,t+\theta_i)\).  The original
continuous cocycle is a special element of this class, with
\(B_{ij}(t)=A_i(t)\).  The edge notation is necessary after a
state-indexed invariant family has been trivialized: the resulting
coordinate matrix may then depend on both endpoints of the edge.

Continuity and measurable boundedness will play different roles.
Continuity makes the projective kernel Feller and gives estimates
uniform in the initial torus point.  The analytic argument needed for
dimension reduction uses only torus-integrated estimates and will be
proved separately for cocycles in \(\mathcal B_r(U)\).  In particular,
the measurable result below is not obtained by approximating a
measurable cocycle by continuous ones.

Define
\[
 S(U)=\left\{Q=(q_{ij}):
 q_{ij}>0\text{ on }\cE,\ q_{ij}=0\text{ off }\cE,\
 \sum_jq_{ij}=1\right\},
\]
\[
 H(U)=\left\{Z=(z_{ij})\in\C^{N\times N}:
 z_{ij}=0\text{ off }\cE,\ \sum_jz_{ij}=1\right\}.
\]

\begin{theorem}\label{thm:irreducible-core}
Let \(P\in S(U)\) be primitive and suppose
Assumption~\ref{ass:cycle} holds.  Let
\(A_i\in C^0(\T^m,\operatorname{GL}_d(\R))\), and assume that
\((\mathbf A,\boldsymbol\theta,U)\) is irreducible and
\[
 \lambda_1(P)>\lambda_2(P).
\]
Then there are a relatively open neighborhood
\(\Omega\subset H(U)\) of \(P\) and a holomorphic function
\(\Lambda:\Omega\to\C\) such that
\[
 \Lambda(Q)=\lambda_+(Q)
\]
for every \(Q\in\Omega\cap S(U)\).
\end{theorem}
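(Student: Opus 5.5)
The proof follows the route announced in the introduction: express \(\lambda_+(Q)\) as a limit of logarithmic increments of a Markov transfer operator, show that each finite-stage quantity extends polynomially to complex \(Z\in H(U)\), and use conditional projective contraction to get locally uniform convergence. The Weierstrass theorem then gives the holomorphic limit.

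\textbf{Step 1 (transfer operator and increments).} On the state-indexed space \(C(X\times\T^m\times\mathbb P(\R^d))\), define for \(Z\in H(U)\)
\[
 (\mathcal Q_Z\varphi)(i,t,\hat v)=\sum_{j}z_{ij}\,\varphi\bigl(j,t+\theta_i,\widehat{A_i(t)v}\bigr).
\]
For \(Z=Q\in S(U)\) this is the forward Markov kernel conditioned on the current state. Set \(\psi(i,t,\hat v)=\log\|A_i(t)v\|/\|v\|\), fix a reference point \(\hat v_0\), and form
\[
 \Lambda_n(Z)=\sum_i\pi_i(Z)\int_{\T^m}\bigl(\mathcal Q_Z^{\,n}\psi\bigr)(i,t,\hat v_0)\,d\Leb(t).
\]
Each \(\mathcal Q_Z^n\psi\) is a finite sum over admissible words of length \(n+1\), weighted by products \(z_{i_0i_1}\cdots z_{i_{n-1}i_n}\), so it is a polynomial in the free entries of \(Z\). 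By Lemma~\ref{lem:stationary-analytic}, whose Cramer formula continues holomorphically to a complex neighborhood, \(\pi(Z)\) is holomorphic. Hence each \(\Lambda_n\) is holomorphic on a neighborhood \(\Omega_0\subset H(U)\) of \(P\).

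For real \(Q\), \(\Lambda_n(Q)\) is the expected increment \(\log\|A^{n+1}v_0\|-\log\|A^nv_0\|\). Its Cesàro means tend to \(\lambda_+(Q)\) by Furstenberg--Kifer type arguments, and convergence of \(\Lambda_n(Q)\) itself will come from Step 2. The hypotheses of Theorem~\ref{thm:irreducible-core} (primitivity, Assumption~\ref{ass:cycle}, irreducibility, and \(\lambda_1>\lambda_2\)) are open in \(Q\) along the fixed support where needed, or are simply kept fixed. The exception is the gap, which persists near \(P\) by upper semicontinuity of \(\lambda_1+\lambda_2\) together with the convergence proved below; alternatively, the contraction estimate is proved once at \(P\) and transported by a perturbation argument.

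\textbf{Step 2 (uniform conditional contraction; main obstacle).} The goal is an estimate of the form
\[
 \sup_{i\in X,\;t\in\T^m}\ \sup_{\hat v,\hat w}\ \int_{\Sigma_i}\delta\bigl(\widehat{A^{n_1}_x(t)v},\widehat{A^{n_1}_x(t)w}\bigr)^{\alpha}\,d\mu_{P,i}(x)\le\tfrac12\,\delta(\hat v,\hat w)^{\alpha},
\]
with a fixed \(n_1\) and \(\alpha>0\), where \(\delta\) is the projective distance. The plan is to obtain it from uniform directional growth,
\[
 \tfrac1n\log\|A^n_x(t)v\|\to\lambda_1 \quad\text{in } L^1(\mu_{P,i}),
\]
uniformly in \((i,t,\hat v)\). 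This in turn is proved by contradiction using compactness and the Feller property coming from continuity: a failing sequence produces a state-indexed stationary measure whose projective component charges a measurable invariant family with smaller exponent. Irreducibility (in fact quasi-irreducibility), together with ergodicity from Proposition~\ref{prop:base-ergodic} and uniqueness of Lebesgue measure as the relevant torus marginal, rules this out. Applying the same argument to \(\wedge^2\) and using \(\lambda_1>\lambda_2\) gives the contraction, following Cai--Duarte--Klein. The delicate points are keeping the estimate uniform in the initial state \(i\) rather than averaged over \(\pi\), and uniform in \(t\), since only Lebesgue-a.e. information is available from ergodicity. I expect this step to be the main obstacle.

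\textbf{Step 3 (complex perturbation and Cauchy property).} The contraction gives a spectral-gap type bound for \(\mathcal Q_P\) on the space of \(\alpha\)-Hölder functions in \(\hat v\) (sup norm over \(i,t\)): one has \(\|\mathcal Q_P^{n_1}\varphi\|_\alpha\le\tfrac12\|\varphi\|_\alpha\) on the Hölder seminorm. Since \(\|\mathcal Q_Z^{n_1}-\mathcal Q_P^{n_1}\|=O(|Z-P|)\) on this space, with each word weight perturbed polynomially and absolute weight sums near \(1\), a small complex neighborhood \(\Omega\) still satisfies
\[
 |\mathcal Q_Z^{n}\psi(i,t,\hat v)-\mathcal Q_Z^{n}\psi(i,t,\hat w)|\le C\kappa^n,\qquad \kappa<1.
\]
Combined with \(\mathcal Q_Z\mathbf 1=\mathbf 1\), this yields \(|\Lambda_{n+1}(Z)-\Lambda_n(Z)|\le C'\kappa^n\) uniformly on \(\Omega\). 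Indeed, \(\Lambda_{n+1}-\Lambda_n\) is the integral of \(\mathcal Q_Z^{n}\) applied to the difference \(\mathcal Q_Z\psi-\psi\) evaluated along different projective points, and integrating against \(\pi(Z)\otimes\Leb\) uses stationarity, \(\pi(Z)Z=\pi(Z)\), and Lebesgue invariance under the translations.

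Hence \(\Lambda_n\) converges uniformly on \(\Omega\) to a holomorphic \(\Lambda\) by the Weierstrass theorem. On \(\Omega\cap S(U)\) the limit equals the Cesàro limit \(\lambda_+(Q)\) from Step 1, which completes the proof.
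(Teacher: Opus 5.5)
Your proposal is correct and follows essentially the paper's route. The shared steps are: the complexified forward transfer operator normalized by the holomorphic left fixed vector \(\pi(Z)\); projective contraction uniform in the initial state and torus point, derived from uniform averaged growth and the \(\wedge^2\) upper bound (only convergence of the conditional means is needed there, not the \(L^1\) convergence you state); the Cauchy estimate via \(\pi(Z)Z=\pi(Z)\) and translation invariance of \(\Leb\); Weierstrass; and Cesàro identification on real \(Q\).

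The one variation is how contraction is carried to complex \(Z\). You perturb a single block \(\mathcal Q_Z^{n_1}\) in the Hölder seminorm. The paper instead uses the pathwise domination \(|w_Z(\mathbf i)|\le\gamma^{-n}w_P(\mathbf i)\) on \(D_\gamma\), with \(e^{-\zeta}<\gamma<1\), against the decay \(\widehat K_n(\alpha,P)\le C(\alpha)e^{-\zeta n}\). Both give geometrically summable increments.
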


\begin{theorem}[Main theorem]\label{thm:main}
Let \(P\in S(U)\) be primitive and suppose
Assumption~\ref{ass:cycle} holds.  Let
\(A_i\in C^0(\T^m,\operatorname{GL}_d(\R))\), and assume only that
\[
 \lambda_1(P)>\lambda_2(P).
\]
Then there are a relatively open neighborhood
\(\Omega\subset H(U)\) of \(P\) and a holomorphic function
\(\Lambda:\Omega\to\C\) such that
\[
 \Lambda(Q)=\lambda_+(Q)
\]
for every \(Q\in\Omega\cap S(U)\).
\end{theorem}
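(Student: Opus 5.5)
The plan is to prove Theorem~\ref{thm:main} by induction on the fiber dimension \(d\), working throughout in the measurable bounded class \(\mathcal B_r(U)\). The first step is to formulate an analogue of Theorem~\ref{thm:irreducible-core} for cocycles \(\mathbf B\in\mathcal B_r(U)\) that are irreducible in the edge-indexed sense, meaning there is no measurable family \(\mathcal V_i\) with \(B_{ij}(t)\mathcal V_i(t)=\mathcal V_j(t+\theta_i)\) along admissible edges, and that have a simple top exponent. For this class I will redo the transfer-operator argument with torus-integrated estimates in place of Feller continuity. Given this, the induction runs as follows. If the cocycle is irreducible, we are done. If not, there is a state-indexed measurable invariant family \(\mathcal V_i(t)\in\Gr(k,d)\). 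Using a measurable orthonormal frame of \(\mathcal V_i(t)\) and of its orthogonal complement, I conjugate each \(B_{ij}(t)\) to block upper-triangular form
\[
 \begin{pmatrix} B'_{ij}(t) & *\\ 0 & B''_{ij}(t)\end{pmatrix},
\]
where \(B'\in\mathcal B_k(U)\) and \(B''\in\mathcal B_{d-k}(U)\). The entries stay essentially bounded with bounded inverses because the frames are orthonormal. The key point is that invariance is imposed only edge by edge and does not involve the probabilities. The same family is therefore invariant for every \(Q\in S(U)\), and the same decomposition can be used on a whole neighborhood.

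The second step is the exponent bookkeeping. For every \(Q\in S(U)\) near \(P\) (ergodic by Proposition~\ref{prop:base-ergodic}, since primitivity and non-resonance are open in the relative interior), the Lyapunov spectrum of the triangular cocycle is the union, with multiplicity, of the spectra of \(B'\) and \(B''\). This follows from the standard Oseledets argument for triangular cocycles, and the bounded off-diagonal block causes no integrability issues. Hence \(\lambda_+(Q)=\max\{\lambda_+'(Q),\lambda_+''(Q)\}\). At \(Q=P\), simplicity forces exactly one block, say \(B'\), to carry \(\lambda_1(P)\), and forces that block to have a simple top exponent. The other block satisfies \(\lambda''_+(P)\le\lambda_2(P)<\lambda_1(P)\). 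By induction, \(\lambda_+'\) extends holomorphically to a neighborhood \(\Omega\). To conclude \(\lambda_+=\lambda_+'\) near \(P\), I need upper semicontinuity of \(Q\mapsto\lambda''_+(Q)\), as an infimum of \(\frac1n\int\log\|B''^n\|\) over \(n\) with each term continuous in \(Q\), together with lower semicontinuity (indeed continuity) of \(\lambda_+'\), which comes from its holomorphic extension. Together these give \(\lambda''_+(Q)<\lambda'_+(Q)\) on a smaller neighborhood, so \(\Lambda:=\) the extension of \(\lambda'_+\) works.

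The base case and the terminal case are the measurable irreducible analytic theorem. Its proof follows the core route. First define the complexified Markov transfer operator acting on state-indexed families of functions on \(\T^m\times\PP(\R^r)\), with \(n\)-step integrals \(\sum_i\pi_i\int\log\|B^n v\|\) polynomial in the entries \(z_{ij}\); \(\pi\) is handled by Lemma~\ref{lem:stationary-analytic}, or avoided by conditioning on the initial state. Next, show that the increments \(\phi_n(Z)=\) (\(n\)-th minus \((n-1)\)-th) averaged log-norm are holomorphic and converge exponentially fast, uniformly on a complex neighborhood. Weierstrass's theorem then gives a holomorphic limit, which equals \(\lambda_+\) on \(S(U)\).

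I expect the main obstacle to be the uniform projective contraction in the measurable setting. I need an estimate of the form \(\sup_i\int_{\T^m}\int\cdots\,\delta(B^n\hat u,B^n\hat v)^{\alpha}\le C\tau^n\), uniform over the initial state, obtained without continuity in \(t\). My first attempt would be to derive it from the simple gap plus irreducibility, using Oseledets' unstable direction and dominated convergence to get one large \(n_0\) with averaged contraction below \(1/2\) conditioned on every initial state. Primitivity supplies the mixing across states. I would then propagate this by submultiplicativity, and finally perturb to complex \(Z\) near \(P\) via a polynomial bound on finite-step kernels with \(\sum|z_{ij}|\le1+\varepsilon\).
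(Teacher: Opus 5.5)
Your overall architecture matches the paper's proof. You induct on the fiber dimension inside the measurable class $\mathcal B_r(U)$. A state-indexed invariant family is invariant for every $Q\in S(U)$ because the support is fixed. You split into restriction and quotient blocks whose spectra combine as a union, let simplicity select a unique dominant block with simple top exponent, and finish with a measurable irreducible core, the paper's Proposition~\ref{prop:bundle-core}, proved by torus-integrated transfer-operator estimates.

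You differ genuinely in how you keep the same block dominant near $P$. The paper invokes Proposition~\ref{prop:block-continuity}: continuity of the top exponent of every measurable bundle cocycle, proved by a separate induction whose irreducible step needs $L_n(Q,\xi)\to\lambda_+(Q)$ locally uniformly in $Q$. You use only two facts:
\begin{itemize}
\item $\lambda''_+(Q)=\inf_n\frac1n\int\log\|(B'')^n\|\,d(\mu_Q\times\Leb)$ is an infimum of functions continuous in $Q$, hence upper semicontinuous;
\item $\lambda'_+$ is continuous, because the induction hypothesis already gives it a holomorphic extension.
\end{itemize}
This is correct. It makes Proposition~\ref{prop:block-continuity} unnecessary and turns the paper's iterated descent into a clean induction on one measurable statement. (Primitivity and Assumption~\ref{ass:cycle} depend only on $U$ and $\boldsymbol\theta$, so they hold on all of $S(U)$, not just near $P$.)

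The gap is in the terminal measurable case, at exactly the step you flag.
\begin{itemize}
\item Oseledets plus dominated convergence gives $\int\!\int d(B^n_x(t)\xi(t),B^n_x(t)\eta(t))^\alpha\to0$ for each fixed pair of measurable sections. (Irreducibility, via the non-random filtration, ensures a fixed section a.s.\ avoids the slow hyperplane.) It gives neither a rate nor uniformity over pairs. You need exponential decay to beat the $\gamma^{-n}$ loss. You need uniformity to iterate, because after one block the directions $B^{n_0}_x(t)\xi(t)$ are new, path-dependent sections.
\item More seriously, a torus-integrated coefficient is not submultiplicative. Disintegrating as in Lemma~\ref{lem:submult}, the $(n_0+n)$-step integral becomes $\int_{\T^m}\sum w_P\,R_{n_0}(t)^\alpha\,G_n(i_{n_0},t_{n_0};u_{n_0},v_{n_0})\,dt$. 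The second-block conditional integral $G_n$ is small only after integration in $t$. The weight $R_{n_0}(t)^\alpha$ is not small pointwise; it is only bounded by a constant exponentially large in $n_0$. So the product of the two integrals does not bound the integral of the product. This is exactly the content of the Remark preceding Lemma~\ref{lem:submult}, and pointwise-in-$t$ control is what the measurable class lacks.
\end{itemize}
The paper does not iterate a one-block integrated estimate. It gets uniformity over sections from the non-random filtration. It takes the exponential bound $\mathfrak K_n(\alpha,P)\le C_{\mathrm b}e^{-\zeta_{\mathrm b}n}$ as the output of a measurable Furstenberg--Kifer contraction theorem. It then uses that bound only at a single time $n$, for the two specific sections $B_{ij}(s-\theta_i)v$ and $v$ in the analogue of \eqref{eq:direct-difference}. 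You need either that input or an iteration scheme that genuinely handles the $t$-dependence; ``dominated convergence plus submultiplicativity'' supplies neither.

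There is also a smaller point in the complex perturbation. A row-sum bound $\sum_j|z_{ij}|\le1+\varepsilon$ does not let you transfer a $\mu_P$-averaged contraction to the $|w_Z|$-weighted path sum. You need the entrywise comparison $|z_{ij}|\le\gamma^{-1}p_{ij}$ on $\cE$, which yields the pathwise domination \eqref{eq:path-comparison}. It holds automatically for $Z$ close to $P$ on the fixed support, so this is easily repaired.
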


\begin{remark}[Two irreducible inputs]\label{rem:two-cores}
The proof uses two related but distinct forms of the irreducible
argument.  For the original continuous cocycle,
Lemma~\ref{lem:uniform-growth} below gives convergence uniformly in
the state, torus point, and projective direction; its proof uses the
Markov projective kernel.  After dimension reduction the restriction
and quotient maps are generally only measurable.  For those cocycles
we use Proposition~\ref{prop:bundle-core}, whose estimates are
integrated over the torus and require only
\eqref{eq:measurable-bounded-class}.  No uniform-in-\(t\) conclusion
is asserted in that measurable category.
\end{remark}

\begin{corollary}\label{cor:all-exponents}
If the full spectrum at \(P\) is simple, then every Lyapunov exponent
is real-analytic near \(P\) in \(S(U)\).
\end{corollary}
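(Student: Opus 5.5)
The plan is to reduce the statement to Theorem~\ref{thm:main} by passing to exterior powers. For each \(r\), set \(\Lambda^r A_i:\T^m\to\operatorname{GL}(\Lambda^r\R^d)\). These maps are continuous and are driven by the same Markov base \((F,\mu_Q\times\Leb)\). By Oseledets' theorem, or equivalently by the singular value description \(\norm{\Lambda^r A_x^n(t)}=\prod_{s\le r}\sigma_s(A_x^n(t))\), the exponents of the exterior power cocycle are the sums \(\lambda_{s_1}(Q)+\cdots+\lambda_{s_r}(Q)\) with \(s_1<\cdots<s_r\). In particular its top exponent is
\[
 \lambda_+^{(r)}(Q)=\lambda_1(Q)+\cdots+\lambda_r(Q),
\]
and its gap is \(\lambda_r(Q)-\lambda_{r+1}(Q)\). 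This holds for every \(Q\in S(U)\) near \(P\).

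Assume the full spectrum at \(P\) is simple, so \(\lambda_r(P)>\lambda_{r+1}(P)\) for each \(r<d\). Then \(\Lambda^r\mathbf A\) satisfies the hypotheses of Theorem~\ref{thm:main} at \(P\). The matrix \(P\) is primitive and Assumption~\ref{ass:cycle} holds, since both conditions concern only \(P\) and \(\boldsymbol\theta\). The maps \(\Lambda^rA_i\) are continuous with values in \(\operatorname{GL}\), and the top exponent of \(\Lambda^r\mathbf A\) is simple. Note that Theorem~\ref{thm:main} needs no irreducibility, which is why it is the right tool here: exterior powers are typically reducible. It gives a relatively open \(\Omega_r\ni P\) in \(H(U)\) and a holomorphic \(\Lambda^{(r)}\) on \(\Omega_r\) that agrees with \(\lambda_1+\cdots+\lambda_r\) on \(\Omega_r\cap S(U)\). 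For \(r=d\), use \(\log|\det A_x^n(t)|\) directly: \(\lambda_1+\cdots+\lambda_d\) equals \(\sum_i\pi_i(Q)\int\log|\det A_i|\,d\Leb\). This is real-analytic by Lemma~\ref{lem:stationary-analytic}; we need \(Q\) irreducible, which holds near \(P\) on \(S(U)\) because the support is fixed. Alternatively, \(r=d\) is again covered by Theorem~\ref{thm:main}, since \(\Lambda^d\) is one-dimensional and has trivially simple top exponent.

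Now intersect \(\Omega=\bigcap_r\Omega_r\) and set \(\lambda_r=\Lambda^{(r)}-\Lambda^{(r-1)}\) on \(\Omega\cap S(U)\), with \(\Lambda^{(0)}=0\). This difference is the restriction of a holomorphic function on the complex affine space \(H(U)\). Since \(S(U)\) is a real-analytic (indeed affine) submanifold whose complexification is \(H(U)\), the restriction is real-analytic in the free real coordinates \(q_{ij}\), \((i,j)\in\cE\), with one coordinate per row eliminated by the row-sum constraint.

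The only point needing care is the identification of the top exponent of \(\Lambda^r\mathbf A\) with \(\lambda_1+\cdots+\lambda_r\), uniformly for all \(Q\) in a neighborhood and not just at \(P\). This is standard. Ergodicity of \((F,\mu_Q\times\Leb)\) holds for every irreducible \(Q\in S(U)\) by Proposition~\ref{prop:base-ergodic}, because Assumption~\ref{ass:cycle} depends only on the support \(U\). Boundedness of \(\log^\pm\) norms makes Oseledets' theorem and Kingman's subadditive theorem applicable. One also needs \(\Lambda^r\) of a cocycle to again be a cocycle of the form \(F_{\Lambda^r\mathbf A}\), which is immediate from functoriality. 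I expect no real obstacle beyond this bookkeeping.
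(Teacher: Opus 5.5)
Your proposal is correct and follows the paper's argument. You apply Theorem~\ref{thm:main} to each exterior power \(\bigwedge^k\mathbf A\), whose top exponent is \(\lambda_1+\cdots+\lambda_k\), and recover \(\lambda_k\) as the difference of consecutive top exponents; your extra checks (the gap of \(\bigwedge^k\mathbf A\) is \(\lambda_k-\lambda_{k+1}>0\), the base hypotheses depend only on \(U\) and \(\boldsymbol\theta\), and no irreducibility is required) make explicit what the paper's two-line proof leaves implicit.
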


\begin{proof}
For \(1\le k\le d\),
\[
 \lambda_1(P)+\cdots+\lambda_k(P)
 =\lambda_+\!\left(\bigwedge^k\mathbf A,P\right).
\]
Apply Theorem~\ref{thm:main} to consecutive exterior powers and use
\[
 \lambda_k(P)
 =\lambda_+\!\left(\bigwedge^k\mathbf A,P\right)
 -\lambda_+\!\left(\bigwedge^{k-1}\mathbf A,P\right).
\]
\end{proof}

\section{Conditional projective contraction}

On \(\PP^{d-1}\), use the distance
\[
 d(\bar u,\bar v)
 =\frac{\norm{u\wedge v}}{\norm u\,\norm v}.
\]
For \(\alpha>0\), define the contraction quantity uniformly in the
initial Markov state and the initial torus point:
\[
 \widehat K_n(\alpha,P)
 =\max_{i\in X}\sup_{t\in\T^m}\sup_{u\ne v}
 \int_{\Sigma_i}
 \left(
 \frac{d(A_x^n(t)u,A_x^n(t)v)}{d(u,v)}
 \right)^\alpha
 d\mu_{P,i}(x),
\]
where \(\mu_{P,i}\) is the conditional path measure defined in
\eqref{eq:conditional-path-measure}.


\begin{remark}
   A bound obtained only after averaging with respect to the stationary
Markov measure,
\[
 \int_\Sigma H\,d\mu_P
 =
 \sum_{j=1}^N\pi_j
 \int_{\Sigma_j}H\,d\mu_{P,j},
\]
does not provide a bound for each state-conditioned integral
\[
 \int_{\Sigma_j}H\,d\mu_{P,j}.
\]
After conditioning on the prefix
\((x_0,\ldots,x_\ell)\), the future starts from the particular state
\(x_\ell\), and hence its law is \(\mu_{P,x_\ell}\).  Therefore the
inner conditional integral must be estimated uniformly over all
possible values of \(x_\ell\).  This explains the maximum over the
initial state in the definition of \(\widehat K_n(\alpha,P)\).

The supremum over \(t\) is equally important.  After the first block,
the second block begins at the random point
\[
 t_\ell=t+\sum_{r=0}^{\ell-1}\theta_{x_r}.
\]
Moreover, in the Cauchy estimate of
Proposition~\ref{prop:direct-cauchy}, one of the two initial
projective directions is \(A_i(s-\theta_i)v\), which depends on the
integration variable \(s\).  A coefficient in which \(t\) is
integrated before the supremum over directions does not control this
situation.  The pointwise in \(t\) definition above does.
\end{remark}

\begin{lemma}[Submultiplicativity]\label{lem:submult}
For all \(n,\ell\ge0\),
\[
 \widehat K_{n+\ell}(\alpha,P)
 \le \widehat K_n(\alpha,P)\widehat K_\ell(\alpha,P).
\]
\end{lemma}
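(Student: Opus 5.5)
The plan is to use the cocycle identity for the product of length \(n+\ell\), telescope the projective ratio, and condition on the first block using the Markov property.

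\textbf{Cocycle splitting.} For \(x\in\Sigma_i\), \(t\in\T^m\) and \(u,v\) non-collinear,
\[
 A_x^{n+\ell}(t)=A_{\sigma^\ell x}^{n}(t_\ell)\,A_x^\ell(t),
 \qquad t_\ell=t+\sum_{r=0}^{\ell-1}\theta_{x_r}.
\]
Put \(u'=A_x^\ell(t)u\) and \(v'=A_x^\ell(t)v\). These are non-collinear because the matrices are invertible, so \(d(u',v')>0\). The ratio then factors as
\[
 \frac{d(A_x^{n+\ell}(t)u,A_x^{n+\ell}(t)v)}{d(u,v)}
 =\frac{d(A^n_{\sigma^\ell x}(t_\ell)u',A^n_{\sigma^\ell x}(t_\ell)v')}{d(u',v')}\cdot\frac{d(u',v')}{d(u,v)}.
\]
Raise this to the power \(\alpha\).

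\textbf{Conditioning on the prefix.} Integrate against \(\mu_{P,i}\) by first conditioning on the cylinder \([x_0,\dots,x_\ell]\) with \(x_0=i\). By the conditional path formula \eqref{eq:conditional-path-measure}, the conditional law of \(\sigma^\ell x\) given this cylinder is \(\mu_{P,x_\ell}\) on \(\Sigma_{x_\ell}\). On the cylinder, the following quantities depend only on \((x_0,\dots,x_{\ell-1})\):
\begin{itemize}
 \item the second factor \(d(u',v')/d(u,v)\);
 \item the directions \(u',v'\);
 \item the point \(t_\ell\).
\end{itemize}
The inner integral over the future is therefore
\[
 \int_{\Sigma_{x_\ell}}\left(\frac{d(A^n_y(t_\ell)u',A^n_y(t_\ell)v')}{d(u',v')}\right)^\alpha d\mu_{P,x_\ell}(y)\le \widehat K_n(\alpha,P).
\]
This holds because the definition of \(\widehat K_n\) takes the maximum over the initial state and the supremum over all torus points and directions. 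This is the step where both uniformities stressed in the preceding remark are genuinely needed. Without them, the random state \(x_\ell\), the random point \(t_\ell\) and the random directions \(u',v'\) could not be absorbed.

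\textbf{Summing over the first block.} Summing over cylinders bounds the whole integral by
\[
 \widehat K_n(\alpha,P)\int_{\Sigma_i}\left(\frac{d(A^\ell_x(t)u,A^\ell_x(t)v)}{d(u,v)}\right)^\alpha d\mu_{P,i}\le \widehat K_n(\alpha,P)\,\widehat K_\ell(\alpha,P).
\]
Taking \(\sup_{u\ne v}\), \(\sup_t\) and \(\max_i\) gives the lemma.

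\textbf{Edge cases and technical points.}
\begin{itemize}
 \item When \(n=0\) or \(\ell=0\), \(\widehat K_0=1\), so the inequality is trivial.
 \item Measurability in \(x\) is clear: \(A^\ell_x(t)\) depends on finitely many coordinates, and the integrand is a continuous function of the matrix.
 \item The integrand is bounded, since continuity on compact sets bounds \(\|A_i\|\) and \(\|A_i^{-1}\|\). So all integrals are finite and Fubini/Tonelli applies.
\end{itemize}
The only point that needs real care is the Markov conditioning identity. It is checked on cylinders from the product formula \(\mu_{P,i}([i,i_1,\dots,i_{\ell+n}])=(p_{ii_1}\cdots p_{i_{\ell-1}i_\ell})(p_{i_\ell i_{\ell+1}}\cdots)\), and this factorization is exactly the statement that the future is distributed as \(\mu_{P,x_\ell}\).
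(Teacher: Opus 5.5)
Your proposal is correct and follows the paper's proof essentially step by step. Both arguments use the cocycle splitting \(A_x^{n+\ell}(t)=A^n_{\sigma^\ell x}(t_\ell)A_x^\ell(t)\) and the factorization of the projective ratio, then disintegrate over the prefix with the future distributed as \(\mu_{P,x_\ell}\), bound the inner integral by \(\widehat K_n(\alpha,P)\) using the uniformity in state, torus point and directions, and bound the remaining prefix sum by \(\widehat K_\ell(\alpha,P)\); your explicit check that \(\widehat K_0=1\) handles the cases \(n=0\) or \(\ell=0\), which the paper leaves implicit.
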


\begin{proof}
Fix \(n,\ell\geq1\) and \(\alpha>0\).  We prove that
\[
 \widehat K_{n+\ell}(\alpha,P)
 \leq
 \widehat K_n(\alpha,P)\,
 \widehat K_\ell(\alpha,P).
\]

Fix an initial state \(i\in X\), a torus point \(t\in\T^m\), and
distinct projective points \(u,v\in\PP^{d-1}\).  We want to  estimate
\[
 \int_{\Sigma_i}
 \left(
 \frac{
 d(A_x^{n+\ell}(t)u,A_x^{n+\ell}(t)v)}
 {d(u,v)}
 \right)^\alpha
 d\mu_{P,i}(x).
\]

The cocycle identity gives
\begin{equation}\label{eq:cocycle-splitting-projective}
 A_x^{n+\ell}(t)
 =
 A_{\sigma^\ell x}^{n}(t_\ell)\,
 A_x^\ell(t).
\end{equation}
Define the projective contraction ratio of the first \(\ell\) steps by
\[
 R_\ell(x;t,u,v)
 =
 \frac{
 d(A_x^\ell(t)u,A_x^\ell(t)v)}
 {d(u,v)}.
\]
Since every matrix \(A_i(t)\) is invertible and \(u\neq v\), the
projective points
\[
 u_\ell=A_x^\ell(t)u,
 \qquad
 v_\ell=A_x^\ell(t)v
\]
are still distinct.  We may therefore define the contraction ratio of
the remaining \(n\) steps by
\[
 R_n'(\sigma^\ell x;t_\ell,u_\ell,v_\ell)
 =
 \frac{
 d\bigl(
 A_{\sigma^\ell x}^{n}(t_\ell)u_\ell,
 A_{\sigma^\ell x}^{n}(t_\ell)v_\ell
 \bigr)}
 {d(u_\ell,v_\ell)}.
\]
Using \eqref{eq:cocycle-splitting-projective}, we obtain 
$$
 \frac{
 d(A_x^{n+\ell}(t)u,A_x^{n+\ell}(t)v)}
 {d(u,v)} =  R_n'(\sigma^\ell x;t_\ell,u_\ell,v_\ell) R_\ell(x;t,u,v).
$$
After raising both sides to the power \(\alpha\), this becomes
\begin{equation}\label{eq:factor-projective-ratio}
 \left(
 \frac{
 d(A_x^{n+\ell}(t)u,A_x^{n+\ell}(t)v)}
 {d(u,v)}
 \right)^\alpha
 =
 R_\ell(x;t,u,v)^\alpha
 R_n'(\sigma^\ell x;t_\ell,u_\ell,v_\ell)^\alpha.
\end{equation}
Under
\(\mu_{P,i}\), the initial coordinate is \(x_0=i\), and a prefix
\[
 (x_0,x_1,\ldots,x_\ell)
 =
 (i,i_1,\ldots,i_\ell)
\]
has probability
\[
 p_{ii_1}p_{i_1i_2}\cdots p_{i_{\ell-1}i_\ell}.
\]
Conditioned on this prefix, the path after time \(\ell\) is a Markov
path starting at \(i_\ell\).  In other words, its conditional
distribution is \(\mu_{P,i_\ell}\). More precisely, if \(H\) is a non-negative measurable function of the
prefix and of the future path, then
\begin{align}
 &\int_{\Sigma_i}
 H(x_0,\ldots,x_\ell;\sigma^\ell x)
 \,d\mu_{P,i}(x)
 \notag\\
 &\quad=
 \sum_{i_1,\ldots,i_\ell}
 p_{ii_1}p_{i_1i_2}\cdots p_{i_{\ell-1}i_\ell}
 \int_{\Sigma_{i_\ell}}
 H(i,i_1,\ldots,i_\ell;y)
 \,d\mu_{P,i_\ell}(y),
 \label{eq:explicit-markov-disintegration}
\end{align}
where the sum is taken over all admissible prefixes and \(y_0=i_\ell\).

Once a prefix \((i,i_1,\ldots,i_\ell)\) is fixed, the following
quantities are fixed:
 $R_\ell,\, x_\ell=i_\ell,\, t_\ell,\, u_\ell,\, v_\ell.$
Only the future path remains to be integrated.  By the definition of
\(\widehat K_n(\alpha,P)\), applied with initial state \(i_\ell\),
initial torus point \(t_\ell\), and initial projective points
\(u_\ell,v_\ell\), we have
\begin{align}
 &\int_{\Sigma_{i_\ell}}
 \left(
 \frac{
 d\bigl(A_y^n(t_\ell)u_\ell,
         A_y^n(t_\ell)v_\ell\bigr)}
 {d(u_\ell,v_\ell)}
 \right)^\alpha
 d\mu_{P,i_\ell}(y)
 \notag\\
 &\qquad\leq
 \widehat K_n(\alpha,P).
 \label{eq:conditional-Kn-bound}
\end{align}
The estimate is applicable because the definition of
\(\widehat K_n\) takes the supremum over every possible initial
state, torus point, and pair of distinct projective directions.

Applying the disintegration formula
\eqref{eq:explicit-markov-disintegration} to
\eqref{eq:factor-projective-ratio}, and then using
\eqref{eq:conditional-Kn-bound}, gives
\begin{align*}
 &\int_{\Sigma_i}
 \left(
 \frac{
 d(A_x^{n+\ell}(t)u,A_x^{n+\ell}(t)v)}
 {d(u,v)}
 \right)^\alpha
 d\mu_{P,i}(x)
 \\
 &=
 \sum_{i_1,\ldots,i_\ell}
 p_{ii_1}\cdots p_{i_{\ell-1}i_\ell}
 R_\ell(i,i_1,\ldots,i_{\ell-1};t,u,v)^\alpha
 \\
 &\qquad\qquad\times
 \int_{\Sigma_{i_\ell}}
 \left(
 \frac{
 d\bigl(A_y^n(t_\ell)u_\ell,
         A_y^n(t_\ell)v_\ell\bigr)}
 {d(u_\ell,v_\ell)}
 \right)^\alpha
 d\mu_{P,i_\ell}(y)
 \\
 &\leq
 \widehat K_n(\alpha,P)
 \sum_{i_1,\ldots,i_\ell}
 p_{ii_1}\cdots p_{i_{\ell-1}i_\ell}
 R_\ell(i,i_1,\ldots,i_{\ell-1};t,u,v)^\alpha.
\end{align*}
The last sum is exactly
\[
 \int_{\Sigma_i}
 R_\ell(x;t,u,v)^\alpha
 \,d\mu_{P,i}(x).
\]
Consequently,
\begin{align*}
 &\int_{\Sigma_i}
 \left(
 \frac{
 d(A_x^{n+\ell}(t)u,A_x^{n+\ell}(t)v)}
 {d(u,v)}
 \right)^\alpha
 d\mu_{P,i}(x)
 \\
 &\qquad\leq
 \widehat K_n(\alpha,P)
 \int_{\Sigma_i}
 \left(
 \frac{
 d(A_x^\ell(t)u,A_x^\ell(t)v)}
 {d(u,v)}
 \right)^\alpha
 d\mu_{P,i}(x)
 \\
 &\qquad\leq
 \widehat K_n(\alpha,P)
 \widehat K_\ell(\alpha,P).
\end{align*}
The second inequality follows directly from the definition of
\(\widehat K_\ell(\alpha,P)\).

The initial data \(i\), \(t\), \(u\), and \(v\) were arbitrary.
Taking the maximum over \(i\) and the suprema over \(t\) and
\(u\neq v\) yields
\[
 \widehat K_{n+\ell}(\alpha,P)
 \leq
 \widehat K_n(\alpha,P)
 \widehat K_\ell(\alpha,P),
\]
as claimed.
%
\end{proof}

\subsection{Uniform averaged growth}

\subsection{Stationary unit vectors}

We use the stationary measure-vector notation of
Malheiro--Viana~\cite[Section~3]{MV}.  A vector
\[
 \boldsymbol\eta=(\eta_i)_{i\in X}
\]
of positive measures on \(\T^m\) is called a \emph{unit vector} if
every component \(\eta_i\) is a probability measure.  Define the
operator \(\mathcal P_P\) on unit vectors by
\begin{equation}\label{eq:measure-vector-operator}
 (\mathcal P_P\boldsymbol\eta)_j
 =
 \sum_{i=1}^N
 \frac{\pi_i(P)p_{ij}}{\pi_j(P)}
 (f_i)_*\eta_i,
 \qquad j\in X,
\end{equation}
where \(f_i(t)=t+\theta_i\).  The coefficients form a probability
vector for each fixed \(j\), because
\[
 \sum_i\frac{\pi_i(P)p_{ij}}{\pi_j(P)}
 =
 \frac{(\boldsymbol\pi(P)P)_j}{\pi_j(P)}=1.
\]
We say that \(\boldsymbol\eta\) is a
\emph{\(\mathcal P_P\)-stationary unit vector} if
\(\mathcal P_P\boldsymbol\eta=\boldsymbol\eta\).

Associated with a unit vector is the probability measure
\begin{equation}\label{eq:skew-product-measure-vector}
 \mu_P\ltimes\boldsymbol\eta
 =
 \sum_{i=1}^N
 \bigl(\mu_P|_{[0;i]}\bigr)\times\eta_i
\end{equation}
on \(\Sigma\times\T^m\).  Here
\(\mu_P|_{[0;i]}\) has total mass \(\pi_i(P)\).  A direct calculation
on cylinders shows that
\(\mu_P\ltimes\boldsymbol\eta\) is invariant under
\[
 F(x,t)=(\sigma x,t+\theta_{x_0})
\]
if and only if \(\boldsymbol\eta\) is
\(\mathcal P_P\)-stationary.  Thus \(\eta_i\) is precisely the
conditional distribution of the torus coordinate given \(x_0=i\).

\begin{proposition}[Uniqueness of the stationary torus unit vector]
\label{prop:unique-torus-vector}
Assume that \(P\) is irreducible and that
Assumption~\ref{ass:cycle} holds.  Then the unique
\(\mathcal P_P\)-stationary unit vector is
\[
 \boldsymbol\eta^*
 =(\Leb,\ldots,\Leb).
\]
Equivalently, \(\mu_P\times\Leb\) is the unique \(F\)-invariant
probability whose marginal on \(\Sigma\) is \(\mu_P\) and whose
conditional torus measure depends only on the present state.
\end{proposition}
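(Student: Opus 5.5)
We argue by Fourier analysis on the torus, in the same way as the proof of Theorem~\ref{thm:ergodicity-criterion}. First, \(\boldsymbol\eta^*\) is stationary: each \((f_i)_*\Leb=\Leb\), and the coefficients in \eqref{eq:measure-vector-operator} sum to one for each \(j\). For uniqueness, let \(\boldsymbol\eta\) be any \(\mathcal P_P\)-stationary unit vector. For \(k\in\Z^m\), write
\[
 b_j(k)=\int_{\T^m}e^{-2\pi i\langle k,t\rangle}\,d\eta_j(t),
 \qquad b(k)=(b_1(k),\ldots,b_N(k))^T .
\]
Since \(\widehat{(f_i)_*\eta_i}(k)=e^{-2\pi i\langle k,\theta_i\rangle}b_i(k)\), stationarity is equivalent to
\[
 \pi_j b_j(k)=\sum_i \pi_i p_{ij}\,e^{-2\pi i\langle k,\theta_i\rangle}b_i(k)
 \qquad\text{for all } j.
\]
Set \(c_i=\pi_i b_i(k)\). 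The system becomes \(c^T=c^TD_{-k}P\), that is, \(c^T\) is a left fixed vector of \(D_{-k}P\).

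For \(k\neq 0\), the matrices \(D_{-k}P\) and its transpose have the same spectrum. Condition (3) of Theorem~\ref{thm:ergodicity-criterion} for \(-k\) follows from Assumption~\ref{ass:cycle} applied to \(k\), since the same cycle works. Hence (2) holds for \(-k\), so \(1\notin\operatorname{spec}(D_{-k}P)\). Therefore \(c=0\). Since \(\pi_i>0\), this gives \(b(k)=0\) for every \(k\ne0\). For \(k=0\), \(b_j(0)=1\) because each \(\eta_j\) is a probability measure. A finite measure on \(\T^m\) is determined by its Fourier coefficients, so \(\eta_j=\Leb\) for every \(j\).

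For the equivalent formulation, we use the cylinder computation stated just before the proposition. An \(F\)-invariant probability with marginal \(\mu_P\) and state-dependent conditional torus measures has the form \(\mu_P\ltimes\boldsymbol\eta\), and it is invariant exactly when \(\boldsymbol\eta\) is stationary. This reduces the claim to the first part. The only delicate point is the sign and transpose bookkeeping in the Fourier identity: the relation is a left eigenvector condition for \(D_{-k}P\), so we must pass through the spectrum of the transpose and the symmetry \(k\mapsto -k\) of the assumption. No analytic difficulty is expected.
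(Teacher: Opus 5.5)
Your proof is correct. It follows the paper's Fourier strategy, but you handle the key step differently.

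The paper writes stationarity as $\widehat{\boldsymbol\eta}(k)=CD_k^-\widehat{\boldsymbol\eta}(k)$, where $C_{ji}=\pi_i p_{ij}/\pi_j$ is the reversed chain. It then reruns the maximum-modulus argument directly on $C$:
\begin{itemize}
\item equality in the triangle inequality makes $|\widehat\eta_i(k)|$ constant over all states;
\item it also yields the edge relation $\widehat\eta_j(k)=e^{-2\pi i\langle k,\theta_i\rangle}\widehat\eta_i(k)$ along every admissible edge $i\to j$;
\item multiplying this relation around a cycle contradicts Assumption~\ref{ass:cycle}.
\end{itemize}

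You instead weight the Fourier modes by $\pi$. This amounts to the similarity $CD_k^-=\Pi^{-1}(D_{-k}P)^T\Pi$, with $\Pi=\operatorname{diag}(\pi_1,\ldots,\pi_N)$. A nonzero Fourier mode therefore gives a left fixed vector of $D_{-k}P$. Invariance of the spectrum under transposition and the already proved implication (3)$\Rightarrow$(2) of Theorem~\ref{thm:ergodicity-criterion} then finish the argument.

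What each route buys:
\begin{itemize}
\item Yours avoids repeating the maximum principle. It also makes explicit that uniqueness of the stationary torus vector is the transposed, dual form of the ergodicity criterion.
\item The paper's is self-contained and works directly with the reversed chain, without relying on the earlier theorem.
\end{itemize}

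One small simplification: the passage from $k$ to $-k$ is unnecessary. Assumption~\ref{ass:cycle} holds for every nonzero $k$, so it applies to $-k$ directly.
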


\begin{proof}
Because every translation preserves Haar measure,
\(\boldsymbol\eta^*=(\Leb,\ldots,\Leb)\) is stationary.
We prove uniqueness by Fourier analysis.

Let \(\boldsymbol\eta=(\eta_i)_i\) be any stationary unit vector and
define
\[
 \widehat\eta_i(k)
 =
 \int_{\T^m}e^{-2\pi i\langle k,t\rangle}\,d\eta_i(t),
 \qquad k\in\Z^m.
\]
Taking the \(k\)-th Fourier coefficient in
\eqref{eq:measure-vector-operator} gives
\begin{equation}\label{eq:reversed-fourier-equation}
 \widehat\eta_j(k)
 =
 \sum_i
 \frac{\pi_i(P)p_{ij}}{\pi_j(P)}
 e^{-2\pi i\langle k,\theta_i\rangle}
 \widehat\eta_i(k).
\end{equation}
Introduce the reversed transition matrix
\[
 C_{ji}
 =
 \frac{\pi_i(P)p_{ij}}{\pi_j(P)}
\]
and the diagonal unitary matrix
\[
 D_k^-=
 \operatorname{diag}\bigl(
 e^{-2\pi i\langle k,\theta_1\rangle},
 \ldots,
 e^{-2\pi i\langle k,\theta_N\rangle}\bigr).
\]
The matrix \(C\) is row-stochastic and irreducible, with reversed
edges \(j\to i\) precisely when \(p_{ij}>0\).  In vector notation,
\eqref{eq:reversed-fourier-equation} is
\[
 \widehat{\boldsymbol\eta}(k)
 =CD_k^-\widehat{\boldsymbol\eta}(k).
\]

Fix \(k\ne0\) and suppose that
\(\widehat{\boldsymbol\eta}(k)\ne0\).  Put
\[
 R=\max_i|\widehat\eta_i(k)|
\]
and choose \(j\) for which
\(|\widehat\eta_j(k)|=R\).  Equation
\eqref{eq:reversed-fourier-equation} gives
\[
\begin{aligned}
 R
 &=|\widehat\eta_j(k)|\\
 &\le
 \sum_iC_{ji}|\widehat\eta_i(k)|
 \le R.
\end{aligned}
\]
Every inequality is therefore an equality.  Hence every predecessor
\(i\) with \(p_{ij}>0\) has
\(|\widehat\eta_i(k)|=R\).  Irreducibility propagates this fact to all
vertices, so \(R>0\) and
\[
 |\widehat\eta_i(k)|=R
 \quad\text{for every }i.
\]
Equality in the triangle inequality also implies that, for every
admissible edge \(i\to j\),
\begin{equation}\label{eq:fourier-edge-phase}
 \widehat\eta_j(k)
 =
 e^{-2\pi i\langle k,\theta_i\rangle}
 \widehat\eta_i(k).
\end{equation}

Let
\[
 i_0\to i_1\to\cdots\to i_{\ell-1}\to i_\ell=i_0
\]
be any admissible directed cycle.  Multiplying
\eqref{eq:fourier-edge-phase} around the cycle yields
\[
 1=
 \exp\left(
 -2\pi i
 \left\langle k,
 \sum_{r=0}^{\ell-1}\theta_{i_r}
 \right\rangle\right).
\]
Thus every admissible directed cycle is resonant for \(k\), contrary
to Assumption~\ref{ass:cycle}.  Therefore
\[
 \widehat\eta_i(k)=0
 \quad\text{for every }i\text{ and every }k\ne0.
\]
Since each \(\eta_i\) is a probability measure,
\(\widehat\eta_i(0)=1\).  These are exactly the Fourier coefficients
of Haar measure.  Uniqueness of Fourier coefficients on \(\T^m\)
gives \(\eta_i=\Leb\) for every \(i\).
\end{proof}

We record separately the uniform-growth input used in the contraction
argument.  Cai, Duarte and Klein prove the Bernoulli version, uniformly
in the torus point and projective direction, in
\cite[Theorem~5.4]{CDK}.  The next proof records the finite-state
Markov adaptation.

\begin{lemma}[Uniform averaged growth]\label{lem:uniform-growth}
Assume the hypotheses of
Theorem~\ref{thm:irreducible-core}.  Then
\[
 L_n(i,t,v):=
 \frac1n\int_{\Sigma_i}
 \log\frac{\norm{A_x^n(t)v}}{\norm v}\,d\mu_{P,i}(x)
\]
converges to \(\lambda_1(P)\), uniformly in
\((i,t,v)\in X\times\T^m\times\PP^{d-1}\).  Moreover,
\[
 \limsup_{n\to\infty}\ 
 \max_i\sup_t\frac1n
 \int_{\Sigma_i}
 \log\norm{\textstyle\bigwedge^2 A_x^n(t)}
 \,d\mu_{P,i}(x)
 \le\lambda_1(P)+\lambda_2(P).
\]
\end{lemma}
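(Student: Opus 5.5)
We follow the Furstenberg-type argument of Cai--Duarte--Klein \cite[Theorem~5.4]{CDK}, carried out on the extended space \(X\times\T^m\times\PP^{d-1}\).

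\emph{Reduction to an upper bound.} Define the Markov projective kernel
\[
 (\mathcal Q\varphi)(i,t,v)=\sum_j p_{ij}\,\varphi(j,t+\theta_i,A_i(t)v).
\]
Since each \(A_i\) is continuous, \(\mathcal Q\) is Feller on the compact space \(X\times\T^m\times\PP^{d-1}\). Put
\[
 \phi(i,t,v)=\log\frac{\norm{A_i(t)v}}{\norm v}.
\]
By the Markov property, \(L_n=\frac1n\sum_{k<n}\mathcal Q^k\phi\). The functions \(L_n\) are continuous and uniformly bounded. By the cocycle identity and the disintegration formula \eqref{eq:explicit-markov-disintegration}, the functions \(g_n=nL_n\) satisfy
\[
 g_{n+\ell}=g_\ell+\mathcal Q^\ell g_n .
\]
The standard argument for uniformly bounded continuous Birkhoff averages of a Feller kernel then applies. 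Argue by contradiction, take weak-\(*\) limits of the empirical measures \(\frac1n\sum_{k<n}\delta\mathcal Q^k\) along a bad sequence, and use compactness. One obtains
\[
 \limsup_n\sup L_n=\max_\nu\int\phi\,d\nu,\qquad \liminf_n\inf L_n\ge\min_\nu\int\phi\,d\nu,
\]
where \(\nu\) ranges over the \(\mathcal Q\)-stationary probabilities. It therefore suffices to show that \(\int\phi\,d\nu=\lambda_1(P)\) for every stationary \(\nu\).

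\emph{Identifying stationary measures.} Let \(\nu\) be \(\mathcal Q\)-stationary. Its marginal on \(X\) is \(P\)-stationary, hence equals \(\pi(P)\). The conditional torus measures \(\eta_i\) form a \(\mathcal P_P\)-stationary unit vector after the obvious time reversal. More directly, the push-forward relation gives \(\nu_{X\times\T^m}\) invariant under the forward chain \((i,t)\mapsto(j,t+\theta_i)\). The same Fourier argument as in Proposition~\ref{prop:unique-torus-vector}, now with the matrix \(D_kP\) and using Theorem~\ref{thm:ergodicity-criterion}, forces \(\eta_i=\Leb\).

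So \(\nu\) projects to \(\pi\times\Leb\), and it lifts to an \(\hat F\)-invariant measure \(\hat\nu\) on \(\Sigma\times\T^m\times\PP^{d-1}\), where \(\hat F\) is the projective cocycle. It remains to prove \(\int\phi\,d\nu=\lambda_1\). By ergodic decomposition over the ergodic base \(\mu_P\times\Leb\) (Proposition~\ref{prop:base-ergodic}) and Oseledets, \(\int\phi\,d\hat\nu\) equals the exponent of a measurable invariant family carrying \(\hat\nu\), and this value lies in \(\{\lambda_1,\dots,\lambda_\ell\}\). The key point is that \(\hat\nu\) depends only on \((x_0,t,v)\) and the past.

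\emph{Main obstacle.} The main obstacle is excluding \(\int\phi\,d\nu<\lambda_1\). Suppose \(\nu\) charged the slower Oseledets subspaces. Because \(\nu\) is measurable with respect to the present state rather than the future, the support of the conditional measures \(\nu_{i,t}\) would generate an \(\mathcal Q\)-invariant family of proper subspaces. Concretely, take the linear span of the conditional measures of the part of \(\nu\) on \(V_2\); this gives a state-indexed measurable family \(\mathcal V_i(t)\) with \(A_i(t)\mathcal V_i(t)=\mathcal V_j(t+\theta_i)\) along edges, contradicting irreducibility (quasi-irreducibility suffices). I would follow Kifer's non-random filtration argument \cite{Kifer}. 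The step requiring most care is showing that this span really depends only on \((i,t)\) and not on the full path. The planned route is a martingale or backward-limit argument: the future-measurable subspace \(V_2(x,t)\) averaged against the conditional future law yields a state-measurable invariant object. Conditioning on the current state makes the invariance hold edge by edge.

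\emph{Second assertion.} Apply the first assertion's machinery to \(\bigwedge^2\mathbf A\), but only for the upper bound. The Feller argument above gives
\[
 \limsup\sup_t\frac1n\int\log\norm{\textstyle\bigwedge^2A^n_x(t)}\le\max_\nu\int\phi^{(2)}\,d\nu
\]
over stationary measures on the Grassmannian-type projective space of \(\bigwedge^2\R^d\). Each such value is a Lyapunov exponent of \(\bigwedge^2\mathbf A\) over the ergodic base, hence at most \(\lambda_1+\lambda_2\). The norm of \(\bigwedge^2A\) is the supremum over unit decomposable vectors, and a finite net handles passing from vectors to norms uniformly, with continuity in \(t\) giving the uniformity.
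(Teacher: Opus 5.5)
The first assertion follows the paper's proof. You use the Feller kernel on \(X\times\T^m\times\PP^{d-1}\), Krylov--Bogolyubov limits of occupation measures, Lebesgue torus marginals via the Fourier/cycle argument (your use of \(D_kP\) works after replacing \(k\) by \(-k\)), and Kifer's non-random filtration plus irreducibility to exclude \(\int\phi\,d\nu<\lambda_1(P)\). The path-dependence you worry about does not arise if you define the filtration directly as \(\mathcal L^\beta_i(t)=\{v:\limsup_n\frac1n\log\norm{A^n_x(t)v}\le\beta\text{ for }\mu_{P,i}\text{-a.e. }x\}\). This is a subspace depending only on \((i,t)\), and \(A_i(t)\mathcal L^\beta_i(t)\subseteq\mathcal L^\beta_j(t+\theta_i)\) along admissible edges. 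Integrating dimensions against \(\pi\times\Leb\) upgrades the inclusion to equality, so no martingale argument is needed.

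The genuine gap is in the second assertion, at ``a finite net handles passing from vectors to norms.'' The Krylov--Bogolyubov argument for \(\bigwedge^2\mathbf A\) bounds \(\frac1n\int_{\Sigma_i}\log\norm{\bigwedge^2A^n_x(t)w}\,d\mu_{P,i}(x)\), uniformly in \((i,t)\), for each \emph{fixed} unit \(w\). The target has the supremum over \(w\) \emph{inside} the path integral, and the maximizing direction depends on \(x\).

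A finite net \(\{w_1,\dots,w_K\}\) gives \(\log\norm{B}\le C+\max_k\log\norm{Bw_k}\). That leaves you with \(\int\max_k\log\norm{B_n(x)w_k}\,d\mu\), whereas your directional estimate controls only \(\max_k\int\log\norm{B_n(x)w_k}\,d\mu\). These can differ by order \(n\). For instance, let \(B_n(x)\) be \(\operatorname{diag}(e^{na},e^{nb})\) or \(\operatorname{diag}(e^{nb},e^{na})\) with probability \(\frac12\) each, and let \(w_1,w_2\) be the basis vectors. Then \(\int\max_k=na\) while \(\max_k\int=n(a+b)/2\). Continuity in \(t\) does not address this.

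Two fixes are available.

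\emph{Keeping your route.} If \(e\) is a top right singular vector of \(B\), then \(Be\perp B(e^\perp)\), so \(\norm{Bw}\ge|\langle w,e\rangle|\,\norm{B}\) for unit \(w\). Moreover \(c_D=-\int_{S^{D-1}}\log|\langle w,e\rangle|\,d\omega(w)<\infty\), with \(\omega\) the rotation-invariant probability and \(D=\binom d2\), and this constant does not depend on \(e\). Averaging over the unit sphere of \(\bigwedge^2\R^d\) and applying Fubini gives \(\frac1n\int_{\Sigma_i}\log\norm{\bigwedge^2A^n_x(t)}\,d\mu_{P,i}\le\sup_w\frac1n\int_{\Sigma_i}\log\norm{\bigwedge^2A^n_x(t)w}\,d\mu_{P,i}+c_D/n\). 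Your stationary-measure bound \(\le\lambda_1+\lambda_2\) then applies, provided you run it on the full projective space of \(\bigwedge^2\R^d\) and not only on decomposable vectors.

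\emph{The paper's route.} It uses subadditivity of \(h_n(i,t)=\int_{\Sigma_i}\log\norm{\bigwedge^2A^n_x(t)}\,d\mu_{P,i}\), namely \(h_{n+\ell}\le h_n+\mathcal K_P^nh_\ell\) for the base kernel on \(X\times\T^m\), whose unique stationary measure is \(\pi\times\Leb\). A blocking argument gives \(\limsup\frac1n\sup h_n\le\inf_q\frac1q\int h_q\,d(\pi\times\Leb)\), and Kingman identifies the right side with \(\lambda_1+\lambda_2\).
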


\begin{proof}

\noindent\emph{Stationary measure vectors for the projective
cocycle.}
Let
\[
 \mathcal Z=\T^m\times\PP^{d-1}.
\]
For each \(i\in X\), define
\[
 F_i:\mathcal Z\longrightarrow\mathcal Z,
 \qquad
 F_i(t,[v])
 =
 \bigl(t+\theta_i,[A_i(t)v]\bigr).
\]
Let $\boldsymbol\eta=(\eta_i)_{i\in X}$ be a vector of finite positive measures on \(\mathcal Z\).  Following
the measure-vector notation of Malheiro--Viana, we call
\(\boldsymbol\eta\) a \emph{unit vector} when every component
\(\eta_i\) is a probability measure.

Let $\boldsymbol\pi(P)=(\pi_1,\ldots,\pi_N)$
be the stationary probability vector of \(P\).  Define the operator \(\mathcal P_{\mathbf A,P}\) by
\begin{equation}\label{eq:projective-measure-vector-operator}
 \bigl(\mathcal P_{\mathbf A,P}\boldsymbol\eta\bigr)_j
 =
 \sum_{i\in X}
 \frac{\pi_i p_{ij}}{\pi_j}\,(F_i)_*\eta_i.
\end{equation}
The coefficients
\[
 C_{ji}
 =
 \frac{\pi_i p_{ij}}{\pi_j}
\]
are the transition probabilities of the reversed stationary Markov
chain.  Indeed,
\[
 \sum_{i\in X}C_{ji}
 =
 \frac{1}{\pi_j}\sum_{i\in X}\pi_i p_{ij}
 =
 \frac{\pi_j}{\pi_j}
 =
 1.
\]
It follows that \(\mathcal P_{\mathbf A,P}\) maps unit vectors to unit
vectors.

We say that \(\boldsymbol\eta\) is
\emph{\(\mathcal P_{\mathbf A,P}\)-stationary} when
$
 \mathcal P_{\mathbf A,P}\boldsymbol\eta
 =
 \boldsymbol\eta.
$
Equivalently, for every \(j\in X\) and every
\(\varphi\in C(\mathcal Z)\),
\begin{align}
 \int_{\mathcal Z}\varphi\,d\eta_j
 &=
 \sum_{i\in X}
 \frac{\pi_i p_{ij}}{\pi_j}
 \int_{\mathcal Z}\varphi\circ F_i\,d\eta_i
 \notag\\
 &=
 \sum_{i\in X}
 \frac{\pi_i p_{ij}}{\pi_j}
 \int_{\mathcal Z}
 \varphi\bigl(t+\theta_i,[A_i(t)v]\bigr)
 \,d\eta_i(t,[v]).
 \label{eq:projective-stationary-vector-integral}
\end{align}



Define the continuous Markov operator \(\mathcal Q\) on
$
 Y=X\times\T^m\times\PP^{d-1}
$
whose transition kernel is
\begin{equation}\label{eq:projective-Markov-kernel}
 \mathcal Q\bigl((i,t,[v]),\cdot\bigr)
 =
 \sum_{j\in X}p_{ij}\,
 \delta_{\left(
 j,\,
 t+\theta_i,\,
 [A_i(t)v]
 \right)}.
\end{equation}
To a unit vector \(\boldsymbol\eta\), associate the probability
measure
\begin{equation}\label{eq:kernel-measure-from-vector}
 \overline\nu_{\boldsymbol\eta}
 =
 \sum_{i\in X}\pi_i\,\delta_i\times\eta_i
\end{equation}
on \(Y\).  A direct calculation shows that
\begin{equation}\label{eq:kernel-vector-correspondence}
 \mathcal Q_*\overline\nu_{\boldsymbol\eta}
 =
 \overline\nu_{
 \mathcal P_{\mathbf A,P}\boldsymbol\eta}.
\end{equation}
Therefore
\[
 \boldsymbol\eta
 \text{ is \(\mathcal P_{\mathbf A,P}\)-stationary}
 \quad\Longleftrightarrow\quad
 \overline\nu_{\boldsymbol\eta}
 \text{ is \(\mathcal Q\)-stationary}.
\]

Conversely, let \(\nu\) be an arbitrary \(\mathcal Q\)-stationary
probability measure on \(Y\).  Its projection onto \(X\) is stationary
for \(P\).  Since \(P\) is irreducible, this projection is necessarily
\(\boldsymbol\pi(P)\).  Hence \(\nu\) has a unique disintegration of
the form
\[
 \nu
 =
 \sum_{i\in X}\pi_i\,\delta_i\times\eta_i
\]
for some unit vector
\(\boldsymbol\eta=(\eta_i)_{i\in X}\).  The stationarity of \(\nu\)
and \eqref{eq:kernel-vector-correspondence} then imply
\[
 \mathcal P_{\mathbf A,P}\boldsymbol\eta
 =
 \boldsymbol\eta.
\]
Thus stationary probabilities for \(\mathcal Q\) are in one-to-one
correspondence with stationary projective unit vectors.

We now identify the torus marginal of such a vector.  Let
\[
 \eta_i^0
 =
 \bigl(\operatorname{proj}_{\T^m}\bigr)_*\eta_i.
\]
Projecting \eqref{eq:projective-stationary-vector-integral} onto the
torus coordinate gives
\[
 \eta_j^0
 =
 \sum_{i\in X}
 \frac{\pi_i p_{ij}}{\pi_j}
 (f_i)_*\eta_i^0,
 \qquad
 f_i(t)=t+\theta_i.
\]
Thus
\[
 \boldsymbol\eta^0=(\eta_i^0)_{i\in X}
\]
is a stationary unit vector for the torus measure vector operator
\(\mathcal P_P\).

By Proposition~\ref{prop:unique-torus-vector}, that stationary torus
unit vector is unique and is given by
\[
 \boldsymbol\eta^0
 =
 (\Leb,\ldots,\Leb).
\]
Consequently,
\begin{equation}\label{eq:projective-vector-torus-marginal}
 \bigl(\operatorname{proj}_{\T^m}\bigr)_*\eta_i
 =
 \Leb
 \qquad\text{for every }i\in X.
\end{equation}
In particular, every component \(\eta_i\) admits a disintegration
over Lebesgue measure:
\begin{equation}\label{eq:projective-vector-disintegration}
 d\eta_i(t,[v])
 =
 d\Leb(t)\,d\eta_{i,t}([v]),
\end{equation}
where
\[
 t\longmapsto\eta_{i,t}\in\Prob(\PP^{d-1})
\]
is a measurable family, defined for Lebesgue almost every \(t\).

Substituting \eqref{eq:projective-vector-disintegration} into the
stationarity equation gives, for every \(j\in X\) and
Lebesgue-almost every \(s\in\T^m\),
\begin{equation}\label{eq:conditional-projective-vector-equation}
 \eta_{j,s}
 =
 \sum_{i\in X}
 \frac{\pi_i p_{ij}}{\pi_j}
 \bigl(A_i(s-\theta_i)\bigr)_*
 \eta_{i,s-\theta_i}.
\end{equation}
This is the projective, torus dependent version of the stationary
measure vector equation of Malheiro and Viana.

Define $g:Y\longrightarrow\R$ by
\begin{equation}\label{eq:projective-logarithmic-observable}
 g(i,t,[v])
 =
 \log
 \frac{\|A_i(t)v\|}{\|v\|},
\end{equation}
we have, 
\[
 \|A_i(t)^{-1}\|^{-1}
 \leq
 \frac{\|A_i(t)v\|}{\|v\|}
 \leq
 \|A_i(t)\|.
\]
Therefore, if
\[
 C_0
 =
 \max_{\substack{i\in X\\t\in\T^m}}
 \max\left\{
 \log\|A_i(t)\|,
 \log\|A_i(t)^{-1}\|
 \right\},
\]
then
\[
 |g(i,t, [v] )|\leq C_0.
\]
The function \(g\) is continuous because the matrices \(A_i(t)\)
depend continuously on \(t\).

For a projective unit vector \(\boldsymbol\eta\), define 
\begin{align}
 \alpha(\boldsymbol\eta)
 &:=
 \int_Y g\,d\overline\nu_{\boldsymbol\eta}
 \notag\\
 &=
 \sum_{i\in X}\pi_i
 \int_{\T^m\times\PP^{d-1}}
 \log\frac{\|A_i(t)v\|}{\|v\|}
 \,d\eta_i(t, [v]).
 \label{eq:Furstenberg-functional-vector}
\end{align}
If \(\boldsymbol\eta\) is stationary, then
\(\overline\nu_{\boldsymbol\eta}\) is stationary for \(\mathcal Q\).
The cocycle identity gives
\[
 \log
 \frac{\|A_x^n(t)v\|}{\|v\|}
 =
 \sum_{r=0}^{n-1}
 g\left(
 x_r,\,
 t_r,\,
 [A_x^r(t)v]
 \right),
\]
where
\[
 t_r=t+\sum_{q=0}^{r-1}\theta_{x_q}.
\]
Integrating this identity with respect to the corresponding invariant
measure gives
\begin{equation}\label{eq:integrated-n-step-projective-expansion}
 \int
 \log\frac{\|A_x^n(t)v\|}{\|v\|}
 \,d\bigl(\mu_P\ltimes\boldsymbol\eta\bigr)
 =
 n\alpha(\boldsymbol\eta).
\end{equation}

The Furstenberg and Kifer formula in the Markov setting
states that
\begin{equation}\label{eq:Furstenberg-formula-projective-vectors}
 \lambda_1(P)
 =
 \max\left\{
 \alpha(\boldsymbol\eta):
 \boldsymbol\eta
 \text{ is a stationary projective unit vector}
 \right\}.
\end{equation}
The maximum formula, by itself, does not say that
every stationary projective unit vector realizes the top exponent. To obtain that, we use the non-random filtration of Furstenberg and Kifer.  If
\(\boldsymbol\eta\) is an extremal stationary projective unit vector,
then the associated stationary measure
\(\overline\nu_{\boldsymbol\eta}\) is ergodic for \(\mathcal Q\), and
\(\alpha(\boldsymbol\eta)\) is one of the exponents from the
non-random filtration.  If
\[
 \alpha(\boldsymbol\eta)<\lambda_1(P),
\]
the filtration produces a proper measurable family of subspaces
\[
 \mathcal L_i(t)\subsetneq\R^d
\]
satisfying, along every admissible edge \(i\to j\),
\begin{equation}\label{eq:lower-filtration-invariant-family}
 A_i(t)\mathcal L_i(t)
 =
 \mathcal L_j(t+\theta_i)
\end{equation}
for Lebesgue-almost every \(t\).

The irreducibility hypothesis excludes every such proper invariant
family.  Consequently, every extremal stationary projective unit
vector satisfies
\[
 \alpha(\boldsymbol\eta)=\lambda_1(P).
\]
An arbitrary stationary unit vector is a convex combination of extremal
stationary unit vectors.  Since
\(\alpha\) is affine, the same equality holds for every stationary
projective unit vector:
\begin{equation}\label{eq:every-stationary-vector-realizes-top}
 \alpha(\boldsymbol\eta)=\lambda_1(P)
 \qquad
 \text{for every stationary projective unit vector }
 \boldsymbol\eta.
\end{equation}
Equivalently, every \(\mathcal Q\)-stationary probability measure
\(\nu\) satisfies
\begin{equation}\label{eq:every-stationary-measure-realizes-top}
 \int_Y g\,d\nu=\lambda_1(P).
\end{equation}

\medskip
\noindent\emph{Uniform convergence.}
For \(i\in X\), \(t\in\T^m\), and \([v]\in\PP^{d-1}\), define
\begin{equation}\label{eq:averaged-finite-time-expansion}
 L_n(i,t,[v])
 =
 \frac1n
 \int_{\Sigma_i}
 \log\frac{\|A_x^n(t)v\|}{\|v\|}
 \,d\mu_{P,i}(x),
\end{equation}
where \(v\neq0\) is any representative of \([v]\).

We claim that
\begin{equation}\label{eq:uniform-averaged-growth}
 \lim_{n\to\infty}
 \sup_{\substack{i\in X,\;t\in\T^m\\
                 [v]\in\PP^{d-1}}}
 \left|
 L_n(i,t,[v])-\lambda_1(P)
 \right|
 =
 0.
\end{equation}

Fix $z=(i,t,[v])\in Y$. The cocycle identity gives
\begin{equation}\label{eq:additive-cocycle-identity}
 \log\frac{\|A_x^n(t)v\|}{\|v\|}
 =
 \sum_{\ell=0}^{n-1}
 g\left(
 x_\ell,\,
 t_\ell,\,
 [A_x^\ell(t)v]
 \right).
\end{equation}
The distribution of $\left(
 x_\ell,\,
 t_\ell,\,
 [A_x^\ell(t)v]
 \right)$
under \(\mu_{P,i}\) is precisely
\(\mathcal Q^\ell_*\delta_z\).  Integrating
\eqref{eq:additive-cocycle-identity} therefore yields
\begin{align}
 L_n(i,t,\bar v)
 &=
 \frac1n
 \sum_{\ell=0}^{n-1}
 \int_Yg\,d\bigl(\mathcal Q^\ell_*\delta_z\bigr).
 \label{eq:Ln-kernel-iterates}
\end{align}
Let
\begin{equation}\label{eq:projective-occupation-measure}
 \nu_{n,z}
 =
 \frac1n
 \sum_{\ell=0}^{n-1}
 \mathcal Q^\ell_*\delta_z.
\end{equation}
Then
\begin{equation}\label{eq:Ln-occupation-integral}
 L_n(i,t,[v])
 =
 \int_Yg\,d\nu_{n,z}.
\end{equation}

We have that, 
\begin{align}
 \mathcal Q_*\nu_{n,z}-\nu_{n,z}
 &=
 \frac1n
 \sum_{\ell=0}^{n-1}
 \left(
 \mathcal Q^{\ell+1}_*\delta_z
 -
 \mathcal Q^\ell_*\delta_z
 \right)
 \notag\\
 &=
 \frac1n
 \left(
 \mathcal Q^n_*\delta_z-\delta_z
 \right).
 \label{eq:occupation-stationarity-defect}
\end{align}
Hence, for every \(\varphi\in C(Y)\),
\begin{align}
 \left|
 \int_Y\varphi\,d(\mathcal Q_*\nu_{n,z})
 -
 \int_Y\varphi\,d\nu_{n,z}
 \right|
 &=
 \frac1n
 \left|
 \int_Y\varphi\,d(\mathcal Q^n_*\delta_z)
 -
 \varphi(z)
 \right|
 \notag\\
 &\leq
 \frac{2\|\varphi\|_\infty}{n}.
 \label{eq:uniform-stationarity-defect}
\end{align}
This estimate is independent of the initial point \(z\).

Suppose, by contradiction, that
\eqref{eq:uniform-averaged-growth} fails.  Then there exist
\(\varepsilon>0\), integers \(n_r\to\infty\), and points $ z_r=(i_r,t_r,[v_r])\in Y$ such that
\begin{equation}\label{eq:failure-of-uniform-growth}
 \left|
 L_{n_r}(i_r,t_r,[v_r])-\lambda_1(P)
 \right|
 \geq\varepsilon
 \qquad\text{for every }r.
\end{equation}

The phase space \(Y\) is compact.  Therefore \(\Prob(Y)\) is
weak* compact, and, after passing to a subsequence, we may assume
that $ \nu_{n_r,z_r}\longrightarrow\nu$ weakly for some \(\nu\in\Prob(Y)\).

We first show that \(\nu\) is stationary for \(\mathcal Q\).  Let
\(\varphi\in C(Y)\). 
\[
 (\mathcal Q\varphi)(z)
 =
 \int_Y\varphi(w)\,d\mathcal Q(z,w)
\]
is continuous.  It follows that
\begin{align*}
 \int_Y\varphi\,d(\mathcal Q_*\nu)
 -
 \int_Y\varphi\,d\nu
 &=
 \int_Y\mathcal Q\varphi\,d\nu
 -
 \int_Y\varphi\,d\nu
 \\
 &=
 \lim_{r\to\infty}
 \left[
 \int_Y\mathcal Q\varphi\,d\nu_{n_r,z_r}
 -
 \int_Y\varphi\,d\nu_{n_r,z_r}
 \right].
\end{align*}
By \eqref{eq:uniform-stationarity-defect}, the absolute value of the
expression inside the limit is at most
$ 2\|\varphi\|_\infty/{n_r},
$
which converges to zero.  Thus
\[
 \int_Y\varphi\,d(\mathcal Q_*\nu)
 =
 \int_Y\varphi\,d\nu
 \qquad
 \text{for every }\varphi\in C(Y),
\]
and consequently
\[
 \mathcal Q_*\nu=\nu.
\]

By the correspondence established above, there exists a
stationary projective unit vector
\(\boldsymbol\eta=(\eta_i)_{i\in X}\) such that
\[
 \nu
 =
 \sum_{i\in X}\pi_i\,\delta_i\times\eta_i.
\]
The previously proved uniqueness of the stationary torus unit vector
ensures that
\[
 \bigl(\operatorname{proj}_{\T^m}\bigr)_*\eta_i
 =
 \Leb
 \qquad\text{for every }i.
\]
Since the cocycle is irreducible, Step~1 gives
\[
 \int_Yg\,d\nu
 =
 \lambda_1(P).
\]

Finally, \(g\) is continuous and
\(\nu_{n_r,z_r}\to\nu\) weakly.  Therefore,
using \eqref{eq:Ln-occupation-integral},
$$
 \lim_{r\to\infty}
 L_{n_r}(i_r,t_r,[v_r])
 =
 \lim_{r\to\infty}
 \int_Yg\,d\nu_{n_r,z_r} = 
 \int_Yg\,d\nu =  \lambda_1(P).
$$
This contradicts
\eqref{eq:failure-of-uniform-growth}.  Hence
\[
 \sup_{\substack{i\in X,\;t\in\T^m\\
                 [v]\in\PP^{d-1}}}
 \left|
 \frac1n
 \int_{\Sigma_i}
 \log\frac{\|A_x^n(t)v\|}{\|v\|}
 \,d\mu_{P,i}(x)
 -
 \lambda_1(P)
 \right|
 \longrightarrow0.
\]
Hence uniform convergence.

\noindent\emph{The uniform exterior squar upper bound.}
Consider
\[
 h_n(i,t)=
 \int_{\Sigma_i}
 \log\norm{{\textstyle\bigwedge^2}A_x^n(t)}
 \,d\mu_{P,i}(x).
\]
These continuous functions satisfy the Markov subadditive inequality
\[
 h_{n+\ell}\le h_n+\mathcal K_P^nh_\ell,
\]
where \(\mathcal K_P\) is the base Markov operator
\[
 (\mathcal K_Ph)(i,t)
 =\sum_jp_{ij}h(j,t+\theta_i).
\]
The compact base
has the unique stationary probability given in
Proposition~\ref{prop:unique-torus-vector}.  The standard blocking
argument for a subadditive sequence yields
\[
 \limsup_{n\to\infty}\frac1n\sup_{i,t}h_n(i,t)
 \le
 \inf_{q\ge1}\frac1q
 \sum_i\pi_i\int_{\T^m}h_q(i,t)\,dt.
\]

Kingman's theorem identifies that infimum with
\[
 \lambda_+\!\left({\textstyle\bigwedge^2}\mathbf A,P\right)
 =\lambda_1(P)+\lambda_2(P).
\]
This proves the required uniform upper estimate.
\end{proof}


Now let
\[
 M=\max_{i,t}\norm{A_i(t)},\qquad
 m_*=\min_{i,t}\norm{A_i(t)^{-1}}^{-1},\qquad
 \kappa=M/m_*.
\]

\begin{lemma}[Uniform distortion bound]\label{lem:distortion}
For every \(n\ge1\), every path \(x\), every \(t\), and every
distinct \(u,v\in\PP^{d-1}\),
\[
 \kappa^{-2n}
 \le
 \frac{d(A_x^n(t)u,A_x^n(t)v)}{d(u,v)}
 \le
 \kappa^{2n}.
\]
\end{lemma}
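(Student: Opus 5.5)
The plan is to reduce the statement to a one-step estimate for a single invertible matrix and then iterate it multiplicatively along the path. Fix \(g\in\operatorname{GL}_d(\R)\) and non-zero representatives \(u,v\). Since the distance \(d\) is independent of the representatives, we may write
\[
 \frac{d(gu,gv)}{d(u,v)}
 =
 \frac{\norm{gu\wedge gv}}{\norm{u\wedge v}}\cdot
 \frac{\norm u}{\norm{gu}}\cdot\frac{\norm v}{\norm{gv}} .
\]
This identity gives the one-step bounds directly. Using \(gu\wedge gv=({\textstyle\bigwedge^2}g)(u\wedge v)\) and \(\norm{{\textstyle\bigwedge^2}g}\le\norm g^2\), the first factor is at most \(\norm g^2\). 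The remaining two factors are each at most \(\norm{g^{-1}}\). Hence
\[
 \frac{d(gu,gv)}{d(u,v)}\le\bigl(\norm g\,\norm{g^{-1}}\bigr)^2 .
\]
For the reverse inequality, apply the same bound to \(g^{-1}\) and the pair \(gu,gv\). This yields \(d(gu,gv)/d(u,v)\ge(\norm g\,\norm{g^{-1}})^{-2}\).

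For each single factor \(g=A_{x_r}(t_r)\), the definitions of \(M\) and \(m_*\) give \(\norm g\le M\) and \(\norm{g^{-1}}\le m_*^{-1}\). So the one-step ratio lies in \([\kappa^{-2},\kappa^2]\).

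Finally, write \(A_x^n(t)\) as the product of its \(n\) factors and telescope the ratio:
\[
 \frac{d(A_x^n(t)u,A_x^n(t)v)}{d(u,v)}
 =\prod_{r=0}^{n-1}\frac{d(u_{r+1},v_{r+1})}{d(u_r,v_r)},
\]
where \(u_r=A_x^r(t)u\) and \(v_r=A_x^r(t)v\). Every intermediate pair remains distinct by invertibility, exactly as in the proof of Lemma~\ref{lem:submult}. Each factor lies in \([\kappa^{-2},\kappa^{2}]\), so the product lies in \([\kappa^{-2n},\kappa^{2n}]\).

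I do not expect a real obstacle here. The only point needing care is the exterior-power norm bound \(\norm{{\textstyle\bigwedge^2}g}\le\norm g^2\), which follows from the singular value description \(\norm{{\textstyle\bigwedge^2}g}=s_1s_2\le s_1^2\).
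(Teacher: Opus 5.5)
Your proof is correct and essentially the paper's: both rest on the single-matrix bound $(\|g\|\,\|g^{-1}\|)^{-2}\le d(gu,gv)/d(u,v)\le(\|g\|\,\|g^{-1}\|)^{2}$, obtained from $\|{\textstyle\bigwedge^2}g\|\le\|g\|^2$ with the reverse inequality coming from $g^{-1}$. The only cosmetic difference is that the paper applies this bound once to the product $A_x^n(t)$ and uses submultiplicativity to get $\|A_x^n(t)\|\,\|A_x^n(t)^{-1}\|\le\kappa^n$, whereas you apply it to each factor and telescope; both give $\kappa^{\pm2n}$.
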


\begin{proof}
Let \(B\in\operatorname{GL}_d(\R)\).  For non-zero representatives
of \(u,v\),
\[
\begin{aligned}
 d(Bu,Bv)
 &=\frac{\norm{Bu\wedge Bv}}{\norm{Bu}\norm{Bv}}\\
 &\le
 \frac{\norm{\bigwedge^2B}\,\norm{u\wedge v}}
      {\norm{B^{-1}}^{-2}\norm u\,\norm v}\\
 &\le \norm B^2\norm{B^{-1}}^2d(u,v).
\end{aligned}
\]
Applying this inequality to \(B^{-1}\), with \(Bu,Bv\) in place of
\(u,v\), gives the reverse bound
\[
 d(Bu,Bv)\ge
 \bigl(\norm B\norm{B^{-1}}\bigr)^{-2}d(u,v).
\]
Every one-step matrix has norm at most \(M\) and least singular value
at least \(m_*\).  Hence
\[
 \norm{A_x^n(t)}
 \norm{A_x^n(t)^{-1}}
 \le(M/m_*)^n=\kappa^n.
\]
Substitution proves both inequalities.
\end{proof}

Now let
\[
 \psi_n(x,t;u,v)
 =\frac{d(A_x^n(t)u,A_x^n(t)v)}{d(u,v)}
\]
Lemma~\ref{lem:distortion} gives
\begin{equation}\label{eq:logbound}
 |\log\psi_n(x,t;u,v)|\le2n\log\kappa.
\end{equation}
If \(\kappa=1\), every \(A_i(t)\) is conformal up to scale and no
simple top exponent exists when \(d>1\).  Thus the interesting
projective case has \(\kappa>1\).

\begin{proposition}[Projective contraction]\label{prop:contraction}
Under the hypotheses of Theorem~\ref{thm:irreducible-core}, there are
\(\alpha_*>0\), \(n_*\ge1\), and \(c_*>0\) such that, for every fixed
\(\alpha\in(0,\alpha_*]\),
\[
 \widehat K_n(\alpha,P)
 \le C(\alpha)e^{-\zeta(\alpha)n},
 \qquad n\ge0,
\]
where one may take
\[
 \zeta(\alpha)
 =-\frac1{n_*}\log\widehat K_{n_*}(\alpha,P)>0.
\]
For small \(\alpha\), the construction gives
\(\zeta(\alpha)\ge c_*\alpha\gap\).
\end{proposition}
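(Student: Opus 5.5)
The argument follows the standard Le~Page route: first obtain a single block length \(n_*\) with \(\widehat K_{n_*}(\alpha,P)<1\) for all small \(\alpha\), and then propagate it to all \(n\) with Lemma~\ref{lem:submult}. The starting point is the pointwise identity
\[
 \log\psi_n(x,t;u,v)
 =\log\frac{\norm{\bigwedge^2A_x^n(t)(u\wedge v)}}{\norm{u\wedge v}}
 -\log\frac{\norm{A_x^n(t)u}}{\norm u}
 -\log\frac{\norm{A_x^n(t)v}}{\norm v}.
\]
Bound the first term by \(\log\norm{\bigwedge^2A_x^n(t)}\) and integrate against \(\mu_{P,i}\). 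Lemma~\ref{lem:uniform-growth} then gives, uniformly in \((i,t,u,v)\),
\[
 \frac1n\int_{\Sigma_i}\log\psi_n\,d\mu_{P,i}
 \le \frac1n\max_j\sup_s h_n(j,s)-L_n(i,t,u)-L_n(i,t,v)
 \le (\lambda_1+\lambda_2)-2\lambda_1+o(1)=-\gap+o(1).
\]
Choose \(n_*\) so that the \(o(1)\) term is at most \(\gap/2\). This yields
\[
 \int_{\Sigma_i}\log\psi_{n_*}\,d\mu_{P,i}\le-\tfrac12 n_*\gap
\]
for every \(i,t\) and every pair \(u\ne v\). This step is the place where both uniformities (over the initial state and over the torus point) provided by Lemma~\ref{lem:uniform-growth} are used.

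Next, pass from the logarithmic average to the \(\alpha\)-moment with the elementary inequality \(e^y\le 1+y+\tfrac{y^2}{2}e^{|y|}\). Apply it with \(y=\alpha\log\psi_{n_*}\) and use the distortion bound \eqref{eq:logbound}, \(|\log\psi_{n_*}|\le 2n_*\log\kappa\):
\[
 \int_{\Sigma_i}\psi_{n_*}^\alpha\,d\mu_{P,i}
 \le 1-\tfrac12\alpha n_*\gap+\tfrac12\alpha^2(2n_*\log\kappa)^2\kappa^{2\alpha n_*}.
\]
Again the bound is uniform in \((i,t,u,v)\). Pick \(\alpha_*\) so that the quadratic term is at most \(\tfrac14\alpha n_*\gap\) for all \(\alpha\le\alpha_*\). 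Then
\[
 \widehat K_{n_*}(\alpha,P)\le 1-\tfrac14\alpha n_*\gap<1 .
\]
Consequently \(\zeta(\alpha)=-\tfrac1{n_*}\log\widehat K_{n_*}(\alpha,P)\ge\tfrac14\alpha\gap\), which is the asserted linear lower bound with \(c_*=1/4\).

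For general \(n\), write \(n=qn_*+r\) with \(0\le r<n_*\). Lemma~\ref{lem:submult} gives
\[
 \widehat K_n\le \widehat K_{n_*}^{\,q}\,\widehat K_r .
\]
Lemma~\ref{lem:distortion} gives \(\widehat K_r\le\kappa^{2\alpha r}\le\kappa^{2\alpha n_*}\). Together these yield \(\widehat K_n\le C(\alpha)e^{-\zeta(\alpha)n}\) with \(C(\alpha)=\kappa^{2\alpha n_*}e^{\zeta(\alpha)n_*}\). The case \(n=0\) is trivial, since \(\widehat K_0=1\).

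The main obstacle is the first step. It requires the exterior-square upper bound and the lower bound on \(L_n\) to hold uniformly in the initial state and torus point, and for the fixed \(t\) rather than after averaging over the torus. For the first term, the initial point \((i,t)\) is simply dominated by \(\max_j\sup_s h_n\), which Lemma~\ref{lem:uniform-growth} controls uniformly. Once that uniform input is granted, everything else is elementary calculus together with submultiplicativity.
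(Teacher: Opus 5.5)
Your proposal is correct and follows the paper's proof essentially step by step. You fix one block length \(n_*\) using the uniform directional growth and the uniform exterior-square bound of Lemma~\ref{lem:uniform-growth}. You then apply \(e^s\le 1+s+\tfrac12 s^2e^{|s|}\) together with the distortion bound to get \(\widehat K_{n_*}(\alpha,P)\le 1-\tfrac14\alpha n_*\gap\). Finally, submultiplicativity and the remainder bound from Lemma~\ref{lem:distortion} give the same constants \(c_*=1/4\) and \(C(\alpha)=\widehat K_{n_*}(\alpha,P)^{-1}\kappa^{2\alpha n_*}\). The paper additionally imposes \(\tfrac14\alpha_*n_*\gap<1\) when choosing \(\alpha_*\), but your version rightly does without it: \(\widehat K_{n_*}(\alpha,P)\ge\kappa^{-2\alpha n_*}>0\) already forces \(1-\tfrac14\alpha n_*\gap>0\).
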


\begin{proof}
Write \(\Delta=\gap =\lambda_1(P) - \lambda_2(P)>0\). 

\smallskip

Lemma~\ref{lem:uniform-growth} gives
\[
 \frac1n\int_{\Sigma_i}
 \log\norm{A_x^n(t)v}\,d\mu_{P,i}(x)
 \longrightarrow\lambda_1(P)
\]
uniformly in \(i,t\), and \(v\in\PP^{d-1}\).  Moreover,
\[
 \limsup_{n\to\infty}\max_i\sup_t\frac1n
 \int_{\Sigma_i}
 \log\norm{\textstyle\bigwedge^2 A_x^n(t)}
 \,d\mu_{P,i}(x)
 \le\lambda_1(P)+\lambda_2(P).
\]
Choose \(\varepsilon=\Delta/8\).  By the definition of uniform
convergence and of the uniform limsup, there exists \(N_\varepsilon\)
such that every \(n\ge N_\varepsilon\) satisfies, simultaneously for
all \(i,t,u,v\),
\[
 \frac1n\int_{\Sigma_i}
 \log\norm{{\textstyle\bigwedge^2}A_x^{n}(t)}
 \,d\mu_{P,i}(x)
 \le\lambda_1+\lambda_2+\varepsilon
\]
and
\[
 \frac1n\int_{\Sigma_i}
 \log\norm{A_x^{n}(t)u}\,d\mu_{P,i}(x)
 \ge\lambda_1-\varepsilon,\qquad
 \frac1n\int_{\Sigma_i}
 \log\norm{A_x^{n}(t)v}\,d\mu_{P,i}(x)
 \ge\lambda_1-\varepsilon.
\]
Fix once and for all one \(n_*\ge N_\varepsilon\).
Consequently,
\[
 \max_i\sup_t\sup_{u\ne v}
 \int_{\Sigma_i}\log\psi_{n_*}(x,t;u,v)
 \,d\mu_{P,i}(x)
 \le-\frac58n_*\Delta
 \le-\frac12n_*\Delta.
\]

\smallskip
Using
\[
 e^s\le1+s+\frac12s^2e^{|s|}
\]
and \eqref{eq:logbound}, and setting
\[
 X=\log\psi_{n_*}(x,t;u,v),\qquad
 B_*=2n_*\log\kappa.
\]
Then \(|X|\le B_*\), and uniformly in \(i,t,u,v\),
\[
\begin{aligned}
 \int_{\Sigma_i}e^{\alpha X}\,d\mu_{P,i}
 &\le 1+\alpha\int_{\Sigma_i}X\,d\mu_{P,i}
       +\frac{\alpha^2}{2}
        \int_{\Sigma_i}X^2e^{\alpha|X|}\,d\mu_{P,i}\\
 &\le 1-\frac12\alpha n_*\gap
       +\frac12\alpha^2B_*^2e^{\alpha B_*}.
\end{aligned}
\]
The function
\[
 a\longmapsto aB_*^2e^{aB_*}
\]
is continuous and vanishes at \(a=0\).  Hence one can choose
\(\alpha_*>0\) so small that both
\[
 \alpha_*B_*^2e^{\alpha_*B_*}
 \le\frac12n_*\Delta
\]
and
\[
 \frac14\alpha_*n_*\Delta<1
\]
hold.  For 
\(0<\alpha\le\alpha_*\), 
$\frac12\alpha^2B_*^2e^{\alpha B_*} \le\frac14\alpha n_*\Delta$.
This gives
\[
 \widehat K_{n_*}(\alpha,P)
 \le
 1-\frac14\alpha n_*\Delta<1.
\]
This Taylor estimate is used only at the fixed block length \(n_*\).

\smallskip
\noindent\emph{Iterate the good block.}
Set
\[
 a_\alpha=\widehat K_{n_*}(\alpha,P),\qquad
 C_{\mathrm{rem}}(\alpha)
 =\max_{0\le r<n_*}\widehat K_r(\alpha,P).
\]
Every remainder coefficient is finite by
Lemma~\ref{lem:distortion}; in fact
\[
 C_{\mathrm{rem}}(\alpha)
 \le\max_{0\le r<n_*}\kappa^{2\alpha r}.
\]
Writing \(n=qn_*+r\), \(0\le r<n_*\), gives
\[
 \widehat K_n(\alpha,P)
 \le
 a_\alpha^qC_{\mathrm{rem}}(\alpha)
\]
by Lemma~\ref{lem:submult}.  Define
\[
 \zeta(\alpha)=-\frac1{n_*}\log a_\alpha>0.
\]
Since \(q\ge n/n_*-1\),
\[
 a_\alpha^q
 =e^{-qn_*\zeta(\alpha)}
 \le e^{n_*\zeta(\alpha)}e^{-n\zeta(\alpha)}
 =a_\alpha^{-1}e^{-n\zeta(\alpha)}.
\]
Thus one may take
\[
 C(\alpha)=a_\alpha^{-1}C_{\mathrm{rem}}(\alpha).
\]
Finally, \(-\log(1-s)\ge s\), with
\(s=\alpha n_*\Delta/4\), shows that
\[
 \zeta(\alpha)
 =-\frac1{n_*}\log a_\alpha
 \ge\frac14\alpha\Delta.
\]
\end{proof}

\section{The Markov transfer operator}
Fix \(0<\alpha\leq1\).  Let
\[
 \mathcal B_\alpha
 =
 \left\{
 \Phi\in
 C\bigl(X\times\T^m\times\PP^{d-1};\C\bigr):
 [\Phi]_\alpha<\infty
 \right\},
\]
where
\begin{equation}\label{eq:projective-holder-seminorm}
 [\Phi]_\alpha
 =
 \max_{i\in X}
 \sup_{t\in\T^m}
 \sup_{\substack{u, v\in\PP^{d-1}\\
                  u\neq v}}
 \frac{
 |\Phi(i,t,u)-\Phi(i,t,v)|
 }{
 d(u,v)^\alpha
 }.
\end{equation}
Thus functions in \(\mathcal B_\alpha\) are continuous in all
variables and uniformly \(\alpha\)-Hölder in the projective
coordinate.  We equip \(\mathcal B_\alpha\) with the norm
\begin{equation}\label{eq:projective-holder-norm}
 \|\Phi\|_{\mathcal B_\alpha}
 =
 \|\Phi\|_\infty+[\Phi]_\alpha.
\end{equation}
With this norm, \(\mathcal B_\alpha\) is a complex Banach space.

Let
\[
 E(U)
 =
 \left\{
 Z=(z_{ij})\in\C^{N\times N}:
 z_{ij}=0\text{ whenever }u_{ij}=0
 \right\}.
\]
This is the complex vector space of matrices supported on \(U\).
The complex affine space of row-stochastic perturbations is
\[
 H(U)
 =
 \left\{
 Z\in E(U):
 \sum_{j=1}^Nz_{ij}=1
 \text{ for every }i
 \right\}.
\]

For \(Z\in E(U)\), define the forward operator
\(\mathcal T_Z:\mathcal B_\alpha\to\mathcal B_\alpha\) by
\begin{equation}\label{eq:forward-Markov-operator}
 (\mathcal T_Z\Phi)(i,t, v)
 =
 \sum_{j=1}^Nz_{ij}\,
 \Phi\left(
 j,\,
 t+\theta_i,\,
 A_i(t)v
 \right),
\end{equation}
%
\(i\) is the present state, so the matrix and translation used
during the current step are \(A_i(t)\) and \(\theta_i\), while \(j\)
is the state reached after that step.

For \(i\in X\) and \(n\geq1\), let \(P_n(i)\) denote the
set of admissible paths
\[
 \mathbf i=(i_0,i_1,\ldots,i_n)
\]
of length \(n\) such that \(i_0=i\) and
\[
 u_{i_ri_{r+1}}=1,
 \qquad 0\leq r\leq n-1.
\]
For such a path, define its complex weight by
\begin{equation}\label{eq:complex-path-weight}
 w_Z(\mathbf i)
 =
 \prod_{r=0}^{n-1}z_{i_ri_{r+1}}.
\end{equation}
Starting from \(t_0=t\), define inductively $t_{r+1}=t_r+\theta_{i_r}$.
Equivalently,
\begin{equation}\label{eq:path-torus-points}
 t_r
 =
 t+\sum_{s=0}^{r-1}\theta_{i_s},
 \qquad 1\leq r\leq n.
\end{equation}
The matrix product associated with \(\mathbf i\) is
\begin{equation}\label{eq:path-matrix-product}
 A_{\mathbf i}^n(t)
 =
 A_{i_{n-1}}(t_{n-1})
 \cdots
 A_{i_1}(t_1)
 A_{i_0}(t_0).
\end{equation}

An induction using \eqref{eq:forward-Markov-operator} gives
\begin{equation}\label{eq:iterate-forward-Markov-operator}
 (\mathcal T_Z^n\Phi)(i,t, v)
 =
 \sum_{\mathbf i\in P_n(i)}
 w_Z(\mathbf i)\,
 \Phi\left(
 i_n,\,
 t_n,\,
 A_{\mathbf i}^n(t)v
 \right).
\end{equation}
For fixed \(n\) and \(\Phi\), the right-hand side is a
\(\mathcal B_\alpha\)-valued homogeneous polynomial of degree \(n\)
in the entries of \(Z\in E(U)\).  Its restriction to the affine
subspace \(H(U)\) is therefore a holomorphic polynomial of degree at
most \(n\).

\begin{lemma}\label{lem:operator-bounded}
For every \(Z\in E(U)\), the operator
\[
 \mathcal T_Z:\mathcal B_\alpha\longrightarrow\mathcal B_\alpha
\]
is bounded.  More precisely, if
\[
 M(Z)=\max_{i\in X}\sum_{j=1}^N|z_{ij}|
\]
then
\begin{equation}\label{eq:operator-norm-bound}
 \|\mathcal T_Z\|_{\mathcal L(\mathcal B_\alpha)}
 \leq
 M(Z)\max\{1,\kappa^{2\alpha}\}.
\end{equation}
Consequently, if \(V\subset E(U)\) is bounded, then
\[
 \sup_{Z\in V}
 \|\mathcal T_Z\|_{\mathcal L(\mathcal B_\alpha)}
 <\infty.
\]
Moreover,
\[
 E(U)\ni Z\longmapsto
 \mathcal T_Z\in\mathcal L(\mathcal B_\alpha)
\]
is complex-linear and hence entire.  In particular, its restriction
to \(H(U)\) is holomorphic.
\end{lemma}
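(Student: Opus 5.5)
We bound the two parts of the norm \eqref{eq:projective-holder-norm} separately, directly from the definition \eqref{eq:forward-Markov-operator}. Fix \(Z\in E(U)\) and \(\Phi\in\mathcal B_\alpha\). First we check that \(\mathcal T_Z\Phi\) is again continuous on \(X\times\T^m\times\PP^{d-1}\). It is a finite sum of the functions
\[
(i,t,v)\mapsto\Phi\bigl(j,t+\theta_i,A_i(t)v\bigr).
\]
Each of these is continuous, because \(\Phi\) is continuous and \((t,v)\mapsto(t+\theta_i,[A_i(t)v])\) is continuous, the \(A_i\) being continuous with values in \(\operatorname{GL}_d(\R)\).

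For the sup norm, the triangle inequality gives
\[
|(\mathcal T_Z\Phi)(i,t,v)|\le\sum_j|z_{ij}|\,\|\Phi\|_\infty\le M(Z)\|\Phi\|_\infty .
\]

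For the Hölder seminorm, fix \(i\), \(t\) and distinct \(u,v\). Then
\[
|(\mathcal T_Z\Phi)(i,t,u)-(\mathcal T_Z\Phi)(i,t,v)|
\le\sum_j|z_{ij}|\,[\Phi]_\alpha\, d\bigl(A_i(t)u,A_i(t)v\bigr)^\alpha .
\]
By the one-step inequality in the proof of Lemma~\ref{lem:distortion}, we have \(d(Bu,Bv)\le\|B\|^2\|B^{-1}\|^2d(u,v)\le\kappa^2 d(u,v)\) for \(B=A_i(t)\). Hence
\[
[\mathcal T_Z\Phi]_\alpha\le M(Z)\kappa^{2\alpha}[\Phi]_\alpha .
\]
Adding the two estimates gives
\[
\|\mathcal T_Z\Phi\|_{\mathcal B_\alpha}\le M(Z)\max\{1,\kappa^{2\alpha}\}\|\Phi\|_{\mathcal B_\alpha},
\]
which is \eqref{eq:operator-norm-bound}. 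Since \(\kappa\ge1\), the maximum is simply \(\kappa^{2\alpha}\).

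The remaining claims follow from this bound and from linearity. On a bounded set \(V\subset E(U)\), the quantity \(M(Z)\) is bounded, so the operator norms are uniformly bounded. The map \(Z\mapsto\mathcal T_Z\) is complex-linear: it equals \(\sum_{(i,j)\in\cE}z_{ij}\,T_{ij}\), where \(T_{ij}\Phi(k,t,v)=\delta_{ki}\Phi(j,t+\theta_i,A_i(t)v)\). Each \(T_{ij}\) is bounded by the estimates above, applied to a matrix unit. A linear map from the finite-dimensional space \(E(U)\) into \(\mathcal L(\mathcal B_\alpha)\) is continuous, hence entire. Its restriction to the affine subspace \(H(U)\) is therefore holomorphic.

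There is no real obstacle here. The only point requiring care is to apply the projective Lipschitz constant \(\kappa^2\) for a single step, rather than the \(n\)-step bound, and to keep the uniformity in \(t\) and \(i\) so that the seminorm's maximum and suprema are preserved.
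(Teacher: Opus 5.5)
Your proposal is correct and follows the paper's proof essentially step for step. You check continuity, bound the sup norm by $M(Z)\|\Phi\|_\infty$, and bound the Hölder seminorm using the one-step distortion constant $\kappa^{2}$. You then conclude entireness from the decomposition $\mathcal T_Z=\sum_{(i,j)\in\cE}z_{ij}\mathcal L_{ij}$. Your remark that $\kappa\ge 1$ is also correct.
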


\begin{proof}
First observe that \(\mathcal T_Z\Phi\) is continuous.  This follows
from the continuity of \(\Phi\), the continuity of \(t\mapsto A_i(t)\),
and the fact that the sum in
\eqref{eq:forward-Markov-operator} is finite.

For the uniform norm, we have
\begin{align}
 |(\mathcal T_Z\Phi)(i,t, v)|
 &\leq
 \sum_{j=1}^N|z_{ij}|
 \left|
 \Phi\left(
 j,t+\theta_i,A_i(t)v
 \right)
 \right|
 \notag\\
 &\leq
 \left(\sum_{j=1}^N|z_{ij}|\right)
 \|\Phi\|_\infty.
\end{align}
Taking the supremum over \(i,t, v\) gives
\begin{equation}\label{eq:operator-sup-bound}
 \|\mathcal T_Z\Phi\|_\infty
 \leq
 M(Z)\|\Phi\|_\infty.
\end{equation}

We next estimate the projective Hölder seminorm.
Let \( u\neq  v\).  We have that
\begin{align*}
 &\left|
 (\mathcal T_Z\Phi)(i,t,u)
 -
 (\mathcal T_Z\Phi)(i,t, v)
 \right|
 \\
 &\quad\leq
 \sum_{j=1}^N|z_{ij}|
 \left|
 \Phi\left(
 j,t+\theta_i, A_i(t)u
 \right)
 -
 \Phi\left(
 j,t+\theta_i,A_i(t)v
 \right)
 \right|
 \\
 &\quad\leq
 \left(\sum_{j=1}^N|z_{ij}|\right)
 [\Phi]_\alpha
 d\left( A_i(t)u, A_i(t)v
 \right)^\alpha
 \\
 &\quad\leq
 M(Z)\kappa^{2\alpha}
 [\Phi]_\alpha
 d(u, v)^\alpha.
\end{align*}
It follows that
\begin{equation}\label{eq:operator-holder-bound}
 [\mathcal T_Z\Phi]_\alpha
 \leq
 M(Z)\kappa^{2\alpha}[\Phi]_\alpha.
\end{equation}
Combining \eqref{eq:operator-sup-bound} and
\eqref{eq:operator-holder-bound}, we get
\begin{align*}
 \|\mathcal T_Z\Phi\|_{\mathcal B_\alpha}
 &\leq
 M(Z)\|\Phi\|_\infty
 +
 M(Z)\kappa^{2\alpha}[\Phi]_\alpha\\
 &\leq
 M(Z)\max\{1,\kappa^{2\alpha}\}
 \|\Phi\|_{\mathcal B_\alpha}.
\end{align*}
This proves \eqref{eq:operator-norm-bound}.

If \(V\subset E(U)\) is bounded, then, because \(E(U)\) is finite
dimensional,
\[
 M_V:=\sup_{Z\in V}M(Z)<\infty.
\]
Hence
\[
 \sup_{Z\in V}
 \|\mathcal T_Z\|_{\mathcal L(\mathcal B_\alpha)}
 \leq
 M_V\max\{1,\kappa^{2\alpha}\}<\infty.
\]

Finally, for each admissible edge \((i,j)\in\mathcal E\), define the
bounded operator \(\mathcal L_{ij}\) by
\[
 (\mathcal L_{ij}\Phi)(r,t, v)
 =
 \mathbf 1_{\{i\}}(r)\,
 \Phi\left(
 j,t+\theta_i,A_i(t)v \right).
\]
Then
\begin{equation}\label{eq:operator-linear-decomposition}
 \mathcal T_Z
 =
 \sum_{(i,j)\in\mathcal E}z_{ij}\mathcal L_{ij}.
\end{equation}
This is a finite sum of bounded operators and shows that
\[
 Z\longmapsto\mathcal T_Z
\]
is complex-linear from \(E(U)\) into
\(\mathcal L(\mathcal B_\alpha)\).  In particular, it is entire in
the operator norm.  Its restriction to the affine subspace \(H(U)\)
is therefore holomorphic.
\end{proof}

Now we fix \(\alpha\in(0,\alpha_*]\) and let 
\(\zeta=\zeta(\alpha)\).  Choose
\[
 e^{-\zeta}<\gamma<1,
\]
and define
\[
 D_\gamma
 =\left\{Z\in H(U):
       \max_{(i,j)\in\cE}\frac{|z_{ij}|}{p_{ij}}
       <\gamma^{-1}\right\}.
\]
The maximum is taken only over the support, so no division by zero
occurs.  For every admissible path,
\begin{align}
 |w_Z(\mathbf i)|
 &=\prod_{r=0}^{n-1}|z_{i_ri_{r+1}}|\notag\\
 &\le
 \prod_{r=0}^{n-1}
 \bigl(\gamma^{-1}p_{i_ri_{r+1}}\bigr)\notag\\
 &=\gamma^{-n}w_P(\mathbf i).
\label{eq:path-comparison}
\end{align}

\section{Holomorphic extension}
\label{sec:direct}

We now prove analyticity. The argument has
three ingredients: the normalized complex left eigenvector, polynomial
dependence of finite iterates, and a summable estimate for consecutive
logarithmic increments.

\subsection{The normalized complex left eigenvector}

A complex matrix does not have a stationary probability distribution.
Nevertheless, because every \(Z\in H(U)\) satisfies
\[
 Z\mathbf{e}=\mathbf{e},
\]
where $\mathbf{e}:=(1,\ldots,1)^T\in\C^N$,  the eigenvalue \(1\) remains present after complexification.

\begin{lemma}\label{lem:direct-left-vector}
There are a complex neighborhood \(V\subset H(U)\) of \(P\) and a
unique holomorphic map
\[
 Z\longmapsto
 \boldsymbol\ell(Z)=(\ell_1(Z),\ldots,\ell_N(Z))
\]
such that
\[
 \boldsymbol\ell(Z)Z=\boldsymbol\ell(Z),
 \qquad
 \sum_i\ell_i(Z)=1.
\]
For every real \(Q\in V\cap S(U)\),
\(\boldsymbol\ell(Q)=\boldsymbol\pi(Q)\), where $\boldsymbol{\pi}(Q)$ is the stationary distrobution of $Q$.
\end{lemma}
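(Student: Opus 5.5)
The plan is to reuse the Cramer's rule argument of Lemma~\ref{lem:stationary-analytic}, now over complex \(Z\in H(U)\). Consider the linear system
\[
 (Z^T-I)\boldsymbol\ell^T=0,\qquad \mathbf e^T\boldsymbol\ell^T=1,
\]
and form the \(N\times N\) matrix \(M(Z)\) obtained from \(Z^T-I\) by replacing one fixed row, say the last, by \(\mathbf e^T\). The entries of \(M(Z)\) are affine in the free coordinates of \(Z\in H(U)\). The first step is to show that \(\det M(P)\neq0\). Since \(P\) is primitive, hence irreducible, the eigenvalue \(1\) of \(P^T\) is algebraically simple by Perron--Frobenius. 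Consequently \(\ker(P^T-I)\) is spanned by \(\boldsymbol\pi(P)^T\), and the rows of \(P^T-I\) have rank \(N-1\).

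Any single row is redundant, because the columns of \(P^T-I\) sum to zero (\(\mathbf e^T(P^T-I)=((P-I)\mathbf e)^T=0\)). So the remaining \(N-1\) rows span the full row space. Now suppose \(M(P)y=0\). Then \(y\) is annihilated by the row space of \(P^T-I\), so \(y\in\ker(P^T-I)=\C\boldsymbol\pi(P)^T\). The last equation then gives \(\mathbf e^Ty=0\), and since \(\mathbf e^T\boldsymbol\pi(P)^T=1\) we get \(y=0\). Hence \(M(P)\) is invertible.

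Next, \(\det M(Z)\) is a polynomial in \(Z\), so it is non-zero on a complex neighborhood \(V\subset H(U)\) of \(P\). On \(V\), define \(\boldsymbol\ell(Z)^T=M(Z)^{-1}e_N\), where \(e_N\) is the last standard basis vector. By Cramer's rule each \(\ell_i\) is a rational function of \(Z\) with non-vanishing denominator, hence holomorphic on \(V\).

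It remains to check that \(\boldsymbol\ell(Z)\) solves the full system and not only the reduced one. We know \(\mathbf e^T\boldsymbol\ell^T=1\) and that the first \(N-1\) rows of \((Z^T-I)\boldsymbol\ell^T\) vanish. For \(Z\in H(U)\) we still have \(Z\mathbf e=\mathbf e\), so the column sums of \(Z^T-I\) are zero. Therefore the omitted last row equals minus the sum of the others, and it vanishes as well. This gives \(\boldsymbol\ell(Z)Z=\boldsymbol\ell(Z)\).

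Uniqueness follows because any solution of the full system also solves \(M(Z)y=0\) shifted by \(e_N\), that is \(M(Z)y=e_N\), and \(M(Z)\) is invertible on \(V\). Uniqueness as a holomorphic map follows from pointwise uniqueness.

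Finally, for real \(Q\in V\cap S(U)\), the stationary vector \(\boldsymbol\pi(Q)\) solves the same system, so uniqueness gives \(\boldsymbol\ell(Q)=\boldsymbol\pi(Q)\). This requires \(Q\) to be irreducible, and it is: \(Q\) has the same support as \(P\). Even without that observation, invertibility of \(M(Q)\) forces the solution to be unique and \(\boldsymbol\pi(Q)\) is one solution. I expect no real obstacle. The only point requiring care is the row-redundancy argument in the complex affine space \(H(U)\), which rests solely on the identity \(Z\mathbf e=\mathbf e\).
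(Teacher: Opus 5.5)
Your proof is correct and uses the same mechanism as the paper's proof. You build an augmented square system that is invertible at \(P\) because the eigenvalue \(1\) is simple, invert it holomorphically near \(P\), and then use the identity \(Z\mathbf e=\mathbf e\) to recover the full equation \(\boldsymbol\ell(Z)Z=\boldsymbol\ell(Z)\). The only difference is cosmetic: the paper borders with the rank-one term, \(B(Z)=I-Z^T+\mathbf e\mathbf e^T\), whereas you replace a redundant row as in Lemma~\ref{lem:stationary-analytic}. Your write-up is also more explicit than the paper's about why the matrix is invertible at \(P\) and why the solution is unique.
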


\begin{proof}
Define
$$
 \mathcal R:\C^N\longrightarrow\C^N,
 \qquad
 \mathcal Rv
 :=
 \left(\sum_{i=1}^N v_i\right)\mathbf e.
$$
Thus, \(\mathcal Rv\) is the constant vector whose entries are all
equal to the sum of the coordinates of \(v\).

For \(Z\in H(U)\), define $B(Z):=I-Z^T+\mathcal R$.
Since \(P\) is irreducible, the eigenvalue \(1\) of \(P\) is simple. Hence $B(P)$ is invertible
%
After shrinking the complex neighborhood \(V\) of \(P\), the matrix
\(B(Z)\) remains invertible for every \(Z\in V\).  Define
\[
 \boldsymbol\ell(Z)^T:=B(Z)^{-1}\mathbf e.
\]
Equivalently,
\begin{equation}\label{eq:normalized-left-vector-system}
 (I-Z^T)\boldsymbol\ell(Z)^T
 +
 \left(\sum_{i=1}^N\ell_i(Z)\right)\mathbf e
 =
 \mathbf e.
\end{equation}

Since \(Z\mathbf e=\mathbf e\), we have
\[
 \mathbf e^T(I-Z^T)=0.
\]
Multiplying \eqref{eq:normalized-left-vector-system} on the left by
\(\mathbf e^T\) gives
\[
 N\sum_{i=1}^N\ell_i(Z)=N.
\]
Therefore
\[
 \sum_{i=1}^N\ell_i(Z)=1.
\]
Substituting this identity back into
\eqref{eq:normalized-left-vector-system} yields
\[
 (I-Z^T)\boldsymbol\ell(Z)^T=0.
\]
Equivalently,
\[
 \boldsymbol\ell(Z)Z=\boldsymbol\ell(Z).
\]

The map \(Z\mapsto B(Z)\) is holomorphic, and matrix inversion is
holomorphic on the set of invertible matrices.  Hence
\[
 Z\longmapsto\boldsymbol\ell(Z)
\]
is holomorphic on \(V\).

Finally, if \(Q\in V\cap S(U)\) is real and stochastic, then
\(\boldsymbol\ell(Q)\) is a normalized left fixed vector of \(Q\).
By uniqueness of the stationary probability vector,
\[
 \boldsymbol\ell(Q)=\boldsymbol\pi(Q).
\]
\end{proof}

\begin{remark}
For non-real \(Z\), the entries \(\ell_i(Z)\) may be complex.
They are not probabilities.  The vector
\(\boldsymbol\ell(Z)\) is used only as a normalized holomorphic
functional on the finite state space.
\end{remark}


Now let
\[
 g(i,t,v)=\log\frac{\norm{A_i(t)v}}{\norm v}.
\]
Since the matrices and their inverses are uniformly bounded,
\(g\) is bounded and \(\alpha\)-Hölder in \(v\), uniformly in
\((i,t)\).

For \(n\ge0\), \(Z\in V\), and \(v\in\PP^{d-1}\), put
\begin{equation}\label{eq:direct-bn}
 b_n(Z,v)
 =
 \sum_{i=1}^N\ell_i(Z)
 \int_{\T^m}(\mathcal T_Z^ng)(i,t,v)\,dt.
\end{equation}

\begin{lemma}\label{lem:direct-holomorphic}
For every fixed \(n\) and \(v\), the function
\[
 Z\longmapsto b_n(Z,v)
\]
is holomorphic on \(V\).
\end{lemma}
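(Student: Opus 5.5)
Since \(\boldsymbol\ell\) is holomorphic on \(V\) by Lemma~\ref{lem:direct-left-vector}, and \(b_n(Z,v)\) is a finite sum over \(i\) of products \(\ell_i(Z)\cdot c_{n,i}(Z,v)\), it suffices to show that each
\[
 c_{n,i}(Z,v)=\int_{\T^m}(\mathcal T_Z^ng)(i,t,v)\,dt
\]
is holomorphic in \(Z\), in fact a polynomial in the entries of \(Z\).

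First we use the path expansion \eqref{eq:iterate-forward-Markov-operator} with \(\Phi=g\):
\[
 (\mathcal T_Z^ng)(i,t,v)=\sum_{\mathbf i\in P_n(i)}w_Z(\mathbf i)\,g\bigl(i_n,t_n,A^n_{\mathbf i}(t)v\bigr).
\]
The set \(P_n(i)\) is finite. The weight \(w_Z(\mathbf i)\) does not depend on \(t\), and \(t\mapsto g(i_n,t_n,A^n_{\mathbf i}(t)v)\) is continuous and bounded by \(C_0\), since the matrices and their inverses are uniformly bounded. Integration in \(t\) therefore commutes with the finite sum, giving
\[
 c_{n,i}(Z,v)=\sum_{\mathbf i\in P_n(i)}w_Z(\mathbf i)\,\gamma_{\mathbf i}(v),
 \qquad
 \gamma_{\mathbf i}(v)=\int_{\T^m}g\bigl(i_n,t_n,A^n_{\mathbf i}(t)v\bigr)\,dt,
\]
where each \(\gamma_{\mathbf i}(v)\) is a finite real constant independent of \(Z\). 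Each \(w_Z(\mathbf i)\) is a monomial of degree \(n\) in the entries \(z_{ij}\), \((i,j)\in\cE\). Hence \(c_{n,i}(\cdot,v)\) is a polynomial on \(E(U)\), and its restriction to the affine subspace \(H(U)\supset V\) is holomorphic. For \(n=0\) the function \(c_{0,i}\) is constant in \(Z\).

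An alternative route is operator-theoretic. By Lemma~\ref{lem:operator-bounded}, \(Z\mapsto\mathcal T_Z^n\in\mathcal L(\mathcal B_\alpha)\) is holomorphic. The map \(\Phi\mapsto\int_{\T^m}\Phi(i,t,v)\,dt\) is a bounded linear functional on \(\mathcal B_\alpha\), with norm at most \(1\) relative to the sup norm. Composing these preserves holomorphy, provided \(g\in\mathcal B_\alpha\), i.e.\ provided \(g\) is continuous and \(\alpha\)-Hölder in \(v\), which was noted above.

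The argument has no real obstacle. The only point to check is that \(g\) is integrable and that \(\gamma_{\mathbf i}\) is well defined, both of which follow from continuity and boundedness. Holomorphy of the product \(\ell_i(Z)c_{n,i}(Z,v)\) and of the finite sum over \(i\) then concludes the proof.
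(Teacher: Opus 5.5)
Your proof is correct and follows essentially the same route as the paper: the paper also combines holomorphy of \(\boldsymbol\ell\) from Lemma~\ref{lem:direct-left-vector} with the path expansion, which makes the torus integral of \(\mathcal T_Z^n g\) a polynomial in the supported entries of \(Z\), and closes with products and finite sums. Your operator-theoretic variant, composing \(Z\mapsto\mathcal T_Z^n\) with the bounded functional \(\Phi\mapsto\int_{\T^m}\Phi(i,t,v)\,dt\), is also valid but adds nothing beyond the direct polynomial argument.
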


\begin{proof}
The map \(Z\mapsto\boldsymbol\ell(Z)\) is holomorphic by
Lemma~\ref{lem:direct-left-vector}.  The path expansion shows that
\(\mathcal T_Z^ng\) is a homogeneous polynomial of degree \(n\) in
the supported entries of \(Z\).  Integration over the torus and the
finite sum over \(i\) preserve holomorphy.
\end{proof}


\begin{proposition}[Cauchy estimate]\label{prop:direct-cauchy}
Fix \(\alpha\in(0,\alpha_*]\), and let
\(\zeta=\zeta(\alpha)\) be given by
Proposition~\ref{prop:contraction}.  Choose
\[
 e^{-\zeta}<\gamma<1.
\]
After replacing \(V\) by a relatively compact neighborhood of \(P\)
contained in \(D_\gamma\), there is \(C_V>0\) such that
\[
 |b_{n+1}(Z,v)-b_n(Z,v)|
 \le
 C_V\bigl(\gamma^{-1}e^{-\zeta}\bigr)^n
\]
for every \(Z\in V\), \(v\in\PP^{d-1}\), and \(n\ge0\).
\end{proposition}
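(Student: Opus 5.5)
The plan is to write the increment as a single application of the transfer operator followed by a stationarity correction, and then to use projective contraction along the complex path weights.

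Since \(\boldsymbol\ell(Z)Z=\boldsymbol\ell(Z)\), put \(\Psi_n=\mathcal T_Z^ng\). Then
\[
 b_{n+1}(Z,v)=\sum_i\ell_i(Z)\int_{\T^m}\sum_j z_{ij}\Psi_n\bigl(j,t+\theta_i,A_i(t)v\bigr)\,dt .
\]
In each summand we substitute \(s=t+\theta_i\), which is allowed by translation invariance of \(\Leb\). This gives
\[
 b_{n+1}(Z,v)=\sum_i\ell_i(Z)\sum_j z_{ij}\int_{\T^m}\Psi_n\bigl(j,s,A_i(s-\theta_i)v\bigr)\,ds .
\]
We also have
\[
 b_n(Z,v)=\sum_j\ell_j(Z)\int\Psi_n(j,s,v)\,ds=\sum_i\ell_i(Z)\sum_j z_{ij}\int\Psi_n(j,s,v)\,ds,
\]
where the second equality uses the left eigenvector identity. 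Subtracting,
\[
 b_{n+1}-b_n=\sum_{i,j}\ell_i(Z)z_{ij}\int_{\T^m}\Bigl[\Psi_n\bigl(j,s,A_i(s-\theta_i)v\bigr)-\Psi_n(j,s,v)\Bigr]ds .
\]
This is exactly where the remark's point about a direction depending on the integration variable enters. The prefactors \(|\ell_i(Z)z_{ij}|\) are bounded uniformly on the relatively compact \(V\).

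The main step is a bound on \(|\Psi_n(j,s,u)-\Psi_n(j,s,v)|\) that is uniform in \(s\) and in the directions. We use the path expansion \eqref{eq:iterate-forward-Markov-operator}:
\[
 \Psi_n(j,s,u)-\Psi_n(j,s,v)=\sum_{\mathbf i\in P_n(j)}w_Z(\mathbf i)\Bigl[g\bigl(i_n,s_n,A^n_{\mathbf i}(s)u\bigr)-g\bigl(i_n,s_n,A^n_{\mathbf i}(s)v\bigr)\Bigr].
 \]
We then proceed in three moves.

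First, \(g\) is \(\alpha\)-Hölder in the projective variable, uniformly in \((i,t)\). This needs a short check from the bounds on \(A_i\) and \(A_i^{-1}\): \(\log\norm{Av}\) is Lipschitz in \(d\), and \(d\le1\), so the Hölder constant \(C_g\) is uniform for \(\alpha\le1\).

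Second, the path comparison \eqref{eq:path-comparison} gives \(|w_Z(\mathbf i)|\le\gamma^{-n}w_P(\mathbf i)\).

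Together these bound the \(n\)-term by
\[
 C_g\,\gamma^{-n}\int_{\Sigma_j}\psi_n(x,s;u,v)^\alpha\,d\mu_{P,j}(x)\,d(u,v)^\alpha\le C_g\,\gamma^{-n}\widehat K_n(\alpha,P),
\]
using \(d(u,v)\le1\).

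Third, Proposition~\ref{prop:contraction} gives \(\widehat K_n(\alpha,P)\le C(\alpha)e^{-\zeta n}\). Because \(\widehat K_n\) is a supremum over \(t\) and over pairs of directions, this bound holds for every \(s\) and applies to the pair \(u=A_i(s-\theta_i)v\), \(v\) pointwise. Integrating in \(s\) then costs nothing.

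Collecting constants, \(C_V=N^2\sup_V\max_{i,j}|\ell_i(Z)z_{ij}|\,C_g\,C(\alpha)\). This gives the bound \(C_V(\gamma^{-1}e^{-\zeta})^n\) for all \(n\ge0\), with \(n=0\) covered directly by boundedness of \(g\).

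The only delicate point is the replacement of \(V\). We must shrink to a relatively compact neighborhood of \(P\) inside \(D_\gamma\cap V\), where \(V\) is the domain of Lemma~\ref{lem:direct-left-vector}. This ensures that \(\boldsymbol\ell\) is bounded and that the path comparison holds on it. \(D_\gamma\) is open and contains \(P\), since \(\gamma<1\).

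I expect the main obstacle to be the pointwise-in-\(s\) use of contraction. The paper's choice of \(\widehat K_n\) as a supremum over \(t\) handles it. What needs care is that the complex weights are dominated termwise and not merely after integration, which is why the path expansion is used instead of operator norms.
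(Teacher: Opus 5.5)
Your proposal is correct and follows the paper's proof essentially step for step. Both arguments apply $\mathcal T_Z$ once and substitute $s=t+\theta_i$, then use $\sum_i\ell_i(Z)z_{ij}=\ell_j(Z)$ to rewrite $b_n$, and finally bound the difference pointwise in $s$ by combining the path expansion, the H\"older seminorm of $g$, the comparison $|w_Z(\mathbf i)|\le\gamma^{-n}w_P(\mathbf i)$, and the uniform-in-$t$ coefficient $\widehat K_n(\alpha,P)\le C(\alpha)e^{-\zeta n}$. The only differences are cosmetic: your constant $N^2\sup_V\max_{i,j}|\ell_i(Z)z_{ij}|$ replaces the paper's $L_VR_V$, and you check explicitly that $g$ is uniformly $\alpha$-H\"older, which the paper only asserts.
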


\begin{proof}

Starting from \eqref{eq:direct-bn} and applying the transfer operator
once gives
\[
\begin{aligned}
 b_{n+1}(Z,v)
 &=
 \sum_{i,j}\ell_i(Z)z_{ij}
 \int_{\T^m}
 (\mathcal T_Z^ng)
 \bigl(j,t+\theta_i,A_i(t)v\bigr)\,dt.
\end{aligned}
\]
Use the change of variables \(s=t+\theta_i\):
\[
\begin{aligned}
 b_{n+1}(Z,v)
 &=
 \sum_{i,j}\ell_i(Z)z_{ij}
 \int_{\T^m}
 (\mathcal T_Z^ng)
 \bigl(j,s,A_i(s-\theta_i)v\bigr)\,ds.
\end{aligned}
\]
On the other hand, the identity
\[
 \sum_i\ell_i(Z)z_{ij}=\ell_j(Z)
\]
allows us to write
\[
 b_n(Z,v)
 =
 \sum_{i,j}\ell_i(Z)z_{ij}
 \int_{\T^m}(\mathcal T_Z^ng)(j,s,v)\,ds.
\]
Subtracting,
\begin{align}
 b_{n+1}(Z,v)-b_n(Z,v)
 &=
 \sum_{i,j}\ell_i(Z)z_{ij}
 \int_{\T^m}
 \big[
 (\mathcal T_Z^ng)
   (j,s,A_i(s-\theta_i)v)\notag\\
 &\hspace{44mm}
 -(\mathcal T_Z^ng)(j,s,v)
 \big]\,ds.
\label{eq:direct-difference}
\end{align}

\smallskip
We estimate the integrand pointwise in \(s\) after expanding
\(\mathcal T_Z^n\) along paths beginning at \(j\).  If
\(u,w\in\PP^{d-1}\), then
\begin{align*}
 &
 \left|
 (\mathcal T_Z^ng)(j,s,u)
 -(\mathcal T_Z^ng)(j,s,w)
 \right|\\
 &\quad\le
 [g]_\alpha
 \sum_{\mathbf i:i_0=j}|w_Z(\mathbf i)|
 d(A_{\mathbf i}^n(s)u,A_{\mathbf i}^n(s)w)^\alpha.
\end{align*}
Here the terminal state and terminal torus point are the same in the
two terms; only the projective direction changes.  Hence the
\(\alpha\)-Hölder seminorm of \(g\) gives the displayed inequality.
For \(Z\in D_\gamma\), use
\eqref{eq:path-comparison}.  Factoring out \(d(u,w)^\alpha\) gives
\[
\begin{aligned}
 &\sum_{\mathbf i:i_0=j}|w_Z(\mathbf i)|
 d(A_{\mathbf i}^n(s)u,A_{\mathbf i}^n(s)w)^\alpha\\
 &\quad\le
 \gamma^{-n}d(u,w)^\alpha
 \sum_{\mathbf i:i_0=j}w_P(\mathbf i)
 \left(
 \frac{d(A_{\mathbf i}^n(s)u,A_{\mathbf i}^n(s)w)}
      {d(u,w)}
 \right)^\alpha.
\end{aligned}
\]
The sum with weights \(w_P\) is the integral over paths with prescribed
initial state \(j\), namely the integral against \(\mu_{P,j}\), because
\[
 \sum_{\mathbf i:i_0=j}w_P(\mathbf i)=1.
\]
By the definition of \(\widehat K_n\),
\[
\begin{aligned}
 &
 \left|
 (\mathcal T_Z^ng)(j,s,u)
 -(\mathcal T_Z^ng)(j,s,w)
 \right|\\
 &\qquad\le
 [g]_\alpha\gamma^{-n}
 \widehat K_n(\alpha,P)d(u,w)^\alpha\\
 &\qquad\le
 C(\alpha)[g]_\alpha
 \gamma^{-n}e^{-\zeta n}.
\end{aligned}
\]
This holds uniformly in \(j,s,u,w\).  We used \(d(u,w)\le1\) only in
the last line.

\smallskip
Because \(V\subset D_\gamma\), both
\[
 L_V:=\sup_{Z\in V}\sum_i|\ell_i(Z)|
\quad\text{and}\quad
 R_V:=\sup_{Z\in V}\max_i\sum_j|z_{ij}|
\]
are finite.  Applying the preceding estimate to
\[
 u=A_i(s-\theta_i)v,\qquad w=v
\]
is now legitimate for each \(s\), precisely because
\(\widehat K_n\) contains the supremum over the torus point and the
two directions.  Integrating the pointwise bound in
\eqref{eq:direct-difference} proves
\[
 |b_{n+1}(Z,v)-b_n(Z,v)|
 \le
 L_VR_VC(\alpha)[g]_\alpha
 \gamma^{-n}e^{-\zeta n}.
\]
Thus the proposition holds with
\[
 C_V=L_VR_VC(\alpha)[g]_\alpha.
\]
\end{proof}

\subsection{Uniform convergence and holomorphy}

\begin{proposition}\label{prop:direct-limit}
The sequence \((b_n(\,\cdot\,,v))_{n\ge0}\) converges uniformly on
\(V\), uniformly also in \(v\).  In particular, after fixing any
\(v_0\in\PP^{d-1}\), its limit is a holomorphic function
\[
 \Lambda:V\longrightarrow\C.
\]
\end{proposition}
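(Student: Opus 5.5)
The argument is a Weierstrass-type one: uniform Cauchy bounds from Proposition~\ref{prop:direct-cauchy}, holomorphy of each term from Lemma~\ref{lem:direct-holomorphic}, and the limit passes to a holomorphic function.

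First, by the choice $e^{-\zeta}<\gamma<1$, the ratio $\rho:=\gamma^{-1}e^{-\zeta}$ satisfies $\rho<1$. Proposition~\ref{prop:direct-cauchy} gives
\[
 |b_{n+1}(Z,v)-b_n(Z,v)|\le C_V\rho^n
\]
uniformly in $Z\in V$ and $v\in\PP^{d-1}$. Telescoping then gives, for $n'>n$,
\[
 \sup_{Z\in V}\sup_{v}|b_{n'}(Z,v)-b_n(Z,v)|\le C_V\rho^n/(1-\rho).
\]
So the sequence is uniformly Cauchy on $V\times\PP^{d-1}$ and converges uniformly to a limit $b_\infty(Z,v)$.

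We also need each $b_n(\cdot,v)$ to be bounded on $V$, so that the limit is finite. This is immediate for $b_0$, since $|g|\le C_0$ and $\sum_i|\ell_i(Z)|\le L_V$. Boundedness of the later terms then follows from the telescoping bound.

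Next, fix $v_0$ and set $\Lambda(Z):=b_\infty(Z,v_0)$. Each $b_n(\cdot,v_0)$ is holomorphic on $V$ by Lemma~\ref{lem:direct-holomorphic}. Since $V$ is an open subset of the finite-dimensional complex affine space $H(U)$, the Weierstrass theorem applies: a locally uniform limit of holomorphic functions is holomorphic. One can see this in several variables by applying the one-variable result on complex lines, or by passing the Cauchy integral formula on polydiscs to the limit. Hence $\Lambda$ is holomorphic on $V$.

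A remark on the dependence on $v_0$: the uniform estimate does not by itself show that the limit is independent of $v$. If needed, one can compare $b_n(Z,v)$ and $b_n(Z,v')$ via the same Hölder estimate used in the proof of Proposition~\ref{prop:direct-cauchy}, which shows that the difference is $O(\rho^n)$. For the statement as posed, however, fixing $v_0$ suffices.

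There is no serious obstacle. The only point to check is that the bound in Proposition~\ref{prop:direct-cauchy} is genuinely uniform in $Z$ over $V$, which is why $V$ was taken relatively compact inside $D_\gamma$ and on the domain of $\boldsymbol\ell$. It is also worth noting that holomorphy in several complex variables follows from uniform convergence, because $H(U)$ is identified with an open set in some $\C^{D}$ via the free entries.
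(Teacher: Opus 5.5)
Your proof is correct and follows the paper's argument: you telescope the geometric bound of Proposition~\ref{prop:direct-cauchy} to get a uniformly Cauchy sequence, then apply the Weierstrass theorem to the holomorphic terms from Lemma~\ref{lem:direct-holomorphic}. Your side remark that $|b_n(Z,v)-b_n(Z,v')|=O(\rho^n)$ by the same fiberwise H\"older estimate is also valid, and it gives independence of the limit from $v_0$ on all of $V$; the paper obtains this only on real stochastic matrices, through the later identification with $\lambda_+$.
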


\begin{proof}
Let
\[
 r=\gamma^{-1}e^{-\zeta}<1.
\]
For \(m>n\), Proposition~\ref{prop:direct-cauchy} gives
\[
\begin{aligned}
 |b_m(Z,v)-b_n(Z,v)|
 &\le\sum_{q=n}^{m-1}
       |b_{q+1}(Z,v)-b_q(Z,v)|\\
 &\le C_V\sum_{q=n}^{m-1}r^q
 \le\frac{C_Vr^n}{1-r}.
\end{aligned}
\]
The bound is independent of \(Z\in V\) and \(v\), so the sequence is
uniformly Cauchy.  Each term is holomorphic by
Lemma~\ref{lem:direct-holomorphic}.  The Weierstrass theorem implies
that the uniform limit is holomorphic.

Fix \(v_0\) and denote the resulting limit by \(\Lambda\).  The
identification below shows that on real stochastic matrices the limit
is independent of this choice.
\end{proof}

\subsection{Identification with top Lyapunov exponent for real stochastic matrices}

\begin{lemma}[Stability of projective contraction]\label{lem:real-stability}
For every \(Q\in V\cap S(U)\), projective contraction holds uniformly
in \(i,t,u,v\).  Consequently,
\[
 \frac1n\int_{\Sigma_i}
 \log\norm{A_x^n(t)v}\,d\mu_{Q,i}(x)
 \longrightarrow\lambda_+(Q)
\]
uniformly in \(i,t,v\).
\end{lemma}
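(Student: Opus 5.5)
The plan is to transfer the fixed-block contraction estimate from \(P\) to nearby real stochastic \(Q\) by a finite perturbation argument. After that, the uniform averaged growth for \(Q\) follows by rerunning the stationary-measure argument of Lemma~\ref{lem:uniform-growth}, with projective contraction replacing irreducibility. The neighborhood \(V\) may have to be shrunk once more; this is harmless for Propositions~\ref{prop:direct-cauchy} and~\ref{prop:direct-limit}.

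\emph{Step 1: the hypotheses persist for \(Q\).} Every \(Q\in S(U)\) has support \(U\), hence the same transition graph as \(P\). Therefore \(Q\) is primitive, and Assumption~\ref{ass:cycle}, which depends only on the directed cycles of the graph, still holds. Consequently Proposition~\ref{prop:base-ergodic} and Proposition~\ref{prop:unique-torus-vector} apply to \(Q\). Moreover, the proof of Lemma~\ref{lem:submult} uses only the Markov property of \(\mu_{Q,i}\), so it gives \(\widehat K_{n+\ell}(\alpha,Q)\le\widehat K_n(\alpha,Q)\widehat K_\ell(\alpha,Q)\).

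\emph{Step 2: stability of the good block (main step).} Fix \(\alpha\) and the block length \(n_*\) from Proposition~\ref{prop:contraction}, so that \(\widehat K_{n_*}(\alpha,P)\le 1-\tfrac14\alpha n_*\gap<1\). For any initial data \(i,t,u\ne v\), the integral defining \(\widehat K_{n_*}\) is a finite sum over \(\mathbf i\in P_{n_*}(i)\) of \(w_Q(\mathbf i)\,\psi_{n_*}^\alpha\). By Lemma~\ref{lem:distortion}, \(0\le\psi_{n_*}^\alpha\le\kappa^{2\alpha n_*}\) uniformly in \((\mathbf i,t,u,v)\). Hence
\[
 \bigl|\widehat K_{n_*}(\alpha,Q)-\widehat K_{n_*}(\alpha,P)\bigr|
 \le\kappa^{2\alpha n_*}\max_i\sum_{\mathbf i\in P_{n_*}(i)}|w_Q(\mathbf i)-w_P(\mathbf i)|,
\]
and the right-hand side is a continuous function of \(Q\) that vanishes at \(P\). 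The key point is that the suprema over \(t,u,v\) cause no trouble, because the bound on the integrand is uniform in them. Shrinking \(V\), we get \(\widehat K_{n_*}(\alpha,Q)\le a<1\) for all \(Q\in V\cap S(U)\), with \(a\) independent of \(Q\). The blocking argument of Proposition~\ref{prop:contraction} then yields \(\widehat K_n(\alpha,Q)\le C e^{-\zeta' n}\), uniformly in \(i,t,u,v\) and in \(Q\).

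\emph{Step 3: growth is asymptotically independent of the direction.} Let \(L^Q_n\) denote the averaged expansion with respect to \(\mu_{Q,i}\). Telescoping \(\log\|A_x^n(t)u\|/\|u\|-\log\|A_x^n(t)w\|/\|w\|\) into increments of \(g\) along the two orbits, and using the Hölder seminorm \([g]_\alpha\), gives
\[
 n\,|L^Q_n(i,t,u)-L^Q_n(i,t,w)|\le[g]_\alpha\sum_{r<n}\widehat K_r(\alpha,Q)\le C'.
\]
So \(\sup_{i,t,u,w}|L^Q_n(i,t,u)-L^Q_n(i,t,w)|=O(1/n)\).

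\emph{Step 4: identification of the limit.} Let \(\nu\) be any stationary measure of the \(Q\)-kernel \(\mathcal Q\) on \(Y\). Stationarity gives \(\int g\,d\nu=\int L^Q_n\,d\nu\) for every \(n\). By the correspondence in Lemma~\ref{lem:uniform-growth} and Proposition~\ref{prop:unique-torus-vector}, \(\nu=\sum_i\pi_i(Q)\delta_i\times\eta_i\), and each \(\eta_i\) has torus marginal \(\Leb\). Combined with Step~3, this gives
\[
 \int g\,d\nu=\sum_i\pi_i(Q)\int_{\T^m}L^Q_n(i,t,v_0)\,dt+O(1/n)
\]
for a fixed \(v_0\). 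The right-hand side does not depend on \(\nu\), so all stationary measures give the same value of \(\int g\,d\nu\). The Furstenberg--Kifer maximum formula \eqref{eq:Furstenberg-formula-projective-vectors} for \(Q\) shows that some stationary measure attains \(\lambda_+(Q)\); hence every stationary measure does. From here, the occupation-measure compactness argument in the proof of Lemma~\ref{lem:uniform-growth} applies verbatim: weak\(^*\) limits of \(\nu_{n,z}\) are \(\mathcal Q\)-stationary, and \(g\) is continuous. This yields \(L^Q_n\to\lambda_+(Q)\) uniformly in \((i,t,v)\).

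I expect the main obstacle to be Step~2. One must make sure that the perturbation bound is uniform over the suprema in the definition of \(\widehat K_{n_*}\), and that the constants \(C,\zeta'\) can be chosen uniformly on the shrunken \(V\). The finite path expansion together with the distortion bound of Lemma~\ref{lem:distortion} is exactly what makes this work.
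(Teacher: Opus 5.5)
Your proposal is correct. Steps~3--4 follow the paper's mechanism: projective contraction for $Q$ forces every stationary measure of the $Q$-kernel to have the same $g$-integral, the Furstenberg--Kifer formula identifies that value with $\lambda_+(Q)$, and the occupation-measure argument of Lemma~\ref{lem:uniform-growth} gives uniformity. The paper packages the middle step as a fiberwise coupling of two stationary measures tested against $\mathcal Q_Q^n g$. You instead use the $O(1/n)$ direction-independence of $L^Q_n$ together with the Lebesgue torus marginals, which is the same idea in different dress.

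The genuine difference is your Step~2. You transfer contraction from $P$ to $Q$ by continuity of the finite path sum at the fixed block length $n_*$, and then rebuild exponential decay through submultiplicativity for $Q$. This forces you to shrink $V$.

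The paper uses that $V$ was already chosen inside $D_\gamma$. For real $Q\in V$ one has the pathwise domination $w_Q(\mathbf i)\le\gamma^{-n}w_P(\mathbf i)$. Hence $\widehat K_n(\alpha,Q)\le\gamma^{-n}\widehat K_n(\alpha,P)\le C(\alpha)\bigl(\gamma^{-1}e^{-\zeta}\bigr)^n$ holds for every $n$ at once. This needs no shrinking and no re-blocking, works on exactly the $V$ of the statement, and has the same ratio as in Proposition~\ref{prop:direct-cauchy}.

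What your route buys is a pure continuity argument in $Q$, giving a decay rate arbitrarily close to $\zeta(\alpha)$ on a small enough set. Strictly, though, it proves the lemma only on a possibly smaller neighborhood than the one fixed in the statement. That is harmless for Theorem~\ref{thm:irreducible-core}, but unnecessary given the domination already built into $D_\gamma$.
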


\begin{proof}
Set
\[
 r=\gamma^{-1}e^{-\zeta}<1.
\]
For a \(Q\)-path of length \(n\), the defining inequality of
\(D_\gamma\) gives
\[
 q_{i_0i_1}\cdots q_{i_{n-1}i_n}
 \le\gamma^{-n}
 p_{i_0i_1}\cdots p_{i_{n-1}i_n}.
\]
Therefore the contraction coefficient computed with \(Q\) satisfies
\[
 \widehat K_n(\alpha,Q)
 \le\gamma^{-n}\widehat K_n(\alpha,P)
 \le C(\alpha)r^n.
\]

Let \(\mathcal Q_Q\) be the projective Markov kernel corresponding to
\(Q\).  If two copies start from \((i,t,u)\) and \((i,t,v)\) and are
driven by the same Markov path, then
\[
\begin{aligned}
 &\left|
 (\mathcal Q_Q^ng)(i,t,u)
 -(\mathcal Q_Q^ng)(i,t,v)
 \right|\\
 &\qquad\le
 [g]_\alpha
 \int_{\Sigma_i}
 d(A_x^n(t)u,A_x^n(t)v)^\alpha
 \,d\mu_{Q,i}(x)\\
 &\qquad\le
 [g]_\alpha C(\alpha)r^n d(u,v)^\alpha.
\end{aligned}
\]
Thus the fiber oscillation of \(\mathcal Q_Q^ng\) tends to zero
uniformly and exponentially.

Now let \(\nu_1,\nu_2\) be two stationary projective probabilities.
Their projections to \(X\times\T^m\) are both the unique stationary
base measure
\[
 \eta_Q=\sum_i\pi_i(Q)\delta_i\times\Leb.
\]
Disintegrate them as
\[
 d\nu_a(i,t,u)=d\nu_{a,i,t}(u)\,d\eta_Q(i,t),
 \qquad a=1,2.
\]
Because \(\nu_a\) is stationary,
\[
 \int g\,d\nu_a=\int\mathcal Q_Q^ng\,d\nu_a.
\]
Couple \(\nu_{1,i,t}\) and \(\nu_{2,i,t}\) arbitrarily in each fiber
and use the preceding oscillation bound.  Since \(d(u,v)\le1\),
\[
 \left|\int g\,d\nu_1-\int g\,d\nu_2\right|
 \le[g]_\alpha C(\alpha)r^n.
\]
Letting \(n\to\infty\) shows that every stationary projective
probability has the same \(g\)-integral.

The Furstenberg formula states that
\[
 \lambda_+(Q)
 =
 \max_{\nu\ {\rm stationary}}\int g\,d\nu.
\]
Therefore the common value is \(\lambda_+(Q)\).  Finally, repeat the contradiction argument (Krylov–Bogolyubov argument) from
Lemma~\ref{lem:uniform-growth}: if directional averages failed to
converge uniformly, a weak limit of violating measures
would be stationary but would have \(g\)-integral different from
\(\lambda_+(Q)\).  This proves the claimed uniform convergence.
\end{proof}

\begin{lemma}\label{lem:direct-identification}
For every real \(Q\in V\cap S(U)\),
\[
 \Lambda(Q)=\lambda_+(Q).
\]
\end{lemma}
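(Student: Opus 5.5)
For real \(Q\in V\cap S(U)\), the plan is to identify \(b_n(Q,v)\) with an average of the logarithmic increments of the cocycle and then apply Lemma~\ref{lem:real-stability}.

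First, for real stochastic \(Q\), Lemma~\ref{lem:direct-left-vector} gives \(\boldsymbol\ell(Q)=\boldsymbol\pi(Q)\). The path expansion \eqref{eq:iterate-forward-Markov-operator} has weights \(w_Q(\mathbf i)\), and these are exactly the cylinder probabilities of \(\mu_{Q,i}\). Hence
\[
 (\mathcal T_Q^ng)(i,t,v)=\int_{\Sigma_i} g\bigl(x_n,t_n,[A_x^n(t)v]\bigr)\,d\mu_{Q,i}(x).
\]
Summing over \(k=0,\dots,n-1\) and using the additive cocycle identity \eqref{eq:additive-cocycle-identity} gives
\[
 \frac1n\sum_{k=0}^{n-1}b_k(Q,v)=\sum_i\pi_i(Q)\int_{\T^m}\frac1n\int_{\Sigma_i}\log\frac{\norm{A_x^n(t)v}}{\norm v}\,d\mu_{Q,i}(x)\,dt .
\]
By Lemma~\ref{lem:real-stability}, the inner normalized integral converges to \(\lambda_+(Q)\) uniformly in \((i,t,v)\). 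Integrating against the probability \(\sum_i\pi_i(Q)\,\delta_i\times\Leb\) therefore shows that the Cesàro means of \(b_k(Q,v)\) converge to \(\lambda_+(Q)\), uniformly in \(v\).

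Second, Proposition~\ref{prop:direct-limit} says that \(b_k(Q,v)\) converges, uniformly in \(v\), and its limit at \(v=v_0\) is \(\Lambda(Q)\). A convergent sequence has Cesàro means tending to the same limit. Comparing with the first step gives \(\Lambda(Q)=\lambda_+(Q)\), and shows at the same time that the limit does not depend on \(v\) for real \(Q\).

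The only step that needs care is the probabilistic reading of \(\mathcal T_Q^n\). One must check that the path ordering in \eqref{eq:path-matrix-product} and the torus points \eqref{eq:path-torus-points} match the increments \(g(x_k,t_k,[A_x^k(t)v])\). This holds because \(g(i_k,t_k,\cdot)\) is evaluated at \(A^k_{\mathbf i}(t)v\) and is determined by the first \(k+1\) symbols, while the remaining path weights sum to one.
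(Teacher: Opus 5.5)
Your proposal is correct and is essentially the paper's own argument. Both proofs use \(\boldsymbol\ell(Q)=\boldsymbol\pi(Q)\) and the path expansion to read \(b_n(Q,v)\) as the expected one-step increment \(\log\bigl(\norm{A_x^{n+1}(t)v}/\norm{A_x^n(t)v}\bigr)\), telescope the Ces\`aro sums, apply Lemma~\ref{lem:real-stability}, and compare with the limit from Proposition~\ref{prop:direct-limit}; you also make explicit that the uniformity in \((i,t,v)\) is what lets the limit pass through the state and torus integrals, a step the paper leaves implicit.
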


\begin{proof}
For real \(Q\), Lemma~\ref{lem:direct-left-vector} gives
\(\boldsymbol\ell(Q)=\boldsymbol\pi(Q)\).  Expanding the paths in
\eqref{eq:direct-bn} therefore yields
\[
\begin{aligned}
 b_n(Q,v)
 &=
 \sum_{i_0,\ldots,i_n}
 \pi_{i_0}(Q)
 q_{i_0i_1}\cdots q_{i_{n-1}i_n}\\
 &\qquad\qquad\times
 \int_{\T^m}
 \log
 \frac{
 \norm{
 A_{i_n}(t_n)A_{\mathbf i}^n(t)v}}
 {\norm{A_{\mathbf i}^n(t)v}}
 \,dt.
\end{aligned}
\]
therefore 
\[
 b_n(Q,v)
 =
 \int_{\Sigma\times\T^m}
 \log
 \frac{\norm{A_x^{n+1}(t)v}}
      {\norm{A_x^n(t)v}}
 \,d\mu_Q(x)\,dt.
\]

The increments telescope:
\[
\begin{aligned}
 \frac1n\sum_{q=0}^{n-1}b_q(Q,v)
 &=
 \frac1n
 \int_{\Sigma\times\T^m}
 \log\frac{\norm{A_x^n(t)v}}{\norm v}
 \,d\mu_Q(x)\,dt\\
 &\longrightarrow\lambda_+(Q)
\end{aligned}
\]
by Lemma~\ref{lem:real-stability}.
Since:
\[
 \sum_{q=0}^{n-1}
 \log\frac{\norm{A_x^{q+1}(t)v}}
          {\norm{A_x^q(t)v}}
 =
 \log\frac{\norm{A_x^n(t)v}}{\norm v}.
\]

Proposition~\ref{prop:direct-limit} gives
\[
 b_n(Q,v)\longrightarrow\Lambda(Q).
\]
A convergent sequence and its Cesàro averages have the same limit.
Consequently \(\Lambda(Q)=\lambda_+(Q)\).
\end{proof}

\begin{proof}[Proof of Theorem~\ref{thm:irreducible-core}]
By Proposition~\ref{prop:direct-limit}, \(\Lambda\) is holomorphic on
a complex neighborhood \(V\subset H(U)\) of \(P\).
Lemma~\ref{lem:direct-identification} shows that its restriction to
the real stochastic matrices equals the top Lyapunov exponent.  Hence
\(Q\mapsto\lambda_+(Q)\) is real-analytic near \(P\).

\end{proof}

\section{Invariant subbundles}

\subsection{Restricted and quotient Markov bundle cocycles}

We now remove irreducibility by the same dimension reduction argument
used for the Bernoulli model in~\cite[Section~3 and the proof of
Theorem~1]{BST}.  The only structural change is that an invariant
family may retain the current Markov state.

Let
\[
 \mathcal V_i:\T^m\longrightarrow\Gr(k,d),
 \qquad i\in X,
\]
be measurable and suppose that
\begin{equation}\label{eq:markov-invariant-family}
 A_i(t)\mathcal V_i(t)
 =\mathcal V_j(t+\theta_i)
 \quad\text{for every }(i,j)\in\cE
\end{equation}
for almost every \(t\).  Since every matrix \(Q\in S(U)\) has exactly
the same admissible edges, \(\boldsymbol{\mathcal V}\) is invariant
for every \(Q\in S(U)\), not merely for the reference matrix \(P\).
This is the fixed-support analogue of the assumption \(p_i>0\) for
every symbol in the Bernoulli paper.

Choose measurable orthonormal trivializations
\[
 J_i(t):\mathcal V_i(t)\longrightarrow\R^k.
\]
The restricted bundle maps along admissible edges are
\[
 A^{\mathcal V}_{ij}(t)
 =
 J_j(t+\theta_i)\circ
 A_i(t)|_{\mathcal V_i(t)}
 \circ J_i(t)^{-1}.
\]
Because the trivializations are isometries and
\eqref{eq:markov-invariant-family} is an equality of subspaces,
\begin{equation}\label{eq:restricted-essential-bounds}
 \|A^{\mathcal V}_{ij}(t)\|\leq\|A_i(t)\|,
 \qquad
 \|(A^{\mathcal V}_{ij}(t))^{-1}\|
 \leq\|A_i(t)^{-1}\|
\end{equation}
for almost every \(t\).  In particular, the restriction belongs to
\(\mathcal B_k(U)\).
Similarly, using the orthogonal quotient
\(\mathcal V_i(t)^\perp\simeq\R^d/\mathcal V_i(t)\), we obtain
quotient maps \(A^{\mathrm{quot}}_{ij}(t)\) of dimension \(d-k\).
Equipping each quotient with its quotient norm gives
\begin{equation}\label{eq:quotient-essential-bounds}
 \|A^{\mathrm{quot}}_{ij}(t)\|\leq\|A_i(t)\|,
 \qquad
 \|(A^{\mathrm{quot}}_{ij}(t))^{-1}\|
 \leq\|A_i(t)^{-1}\|.
\end{equation}
Thus the quotient belongs to \(\mathcal B_{d-k}(U)\).  These maps may
be merely measurable in \(t\), even though every \(A_i\) is
continuous.  Different measurable
trivializations give measurably conjugate cocycles and hence the same
Lyapunov exponents.

This is the regularity point that must be handled in the reduction.
Applying a theorem stated only on
\(C^0(\mathbb T^m,\operatorname{GL}_r(\mathbb R))\) therefore requires
the measurable bounded extension recorded below.
It is natural to regard these as cocycles on the measurable vector
bundle over \(X\times\T^m\).  If one uses coordinates, the associated
transfer operator is
\[
 (\mathcal T_Z^{\mathcal V}\Phi)(i,t, v)
 =
 \sum_{j:(i,j)\in\cE}z_{ij}
 \Phi\bigl(j,t+\theta_i,
       A^{\mathcal V}_{ij}(t)v\bigr).
\]
Thus the finite path expansions remain polynomial in the entries of
\(Z\), exactly as in the irreducible proof.

\begin{proposition}[Furstenberg-Kifer]\label{prop:block-formula}
For every \(Q\in S(U)\),
\[
 \lambda_+(\mathbf A,Q)
 =\max\left\{
 \lambda_+(\mathbf A^{\boldsymbol{\mathcal V}},Q),
 \lambda_+(\mathbf A^{\mathrm{quot}},Q)
 \right\}.
\]
More generally, the Lyapunov spectrum of \(\mathbf A\), including
multiplicities, is the union of the spectra of the restriction and
the quotient.
\end{proposition}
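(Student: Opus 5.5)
The plan is to realize \(A_i(t)\) in block-triangular form and read the exponents off the diagonal blocks. For \(Q\in S(U)\) the support is unchanged, so \eqref{eq:markov-invariant-family} still holds \(\mu_Q\times\Leb\)-almost everywhere along every admissible path. The system \((F,\mu_Q\times\Leb)\) is ergodic by Proposition~\ref{prop:base-ergodic}, applied to \(Q\), which is irreducible with the same graph. For each state \(i\) and a.e.\ \(t\), choose the measurable orthonormal basis of \(\R^d\) given by \(J_i(t)^{-1}\) on \(\mathcal V_i(t)\), completed by an orthonormal basis of \(\mathcal V_i(t)^\perp\). In these coordinates, along an edge \(i\to j\),
\[
 A_i(t)\simeq
 \begin{pmatrix}
 A^{\mathcal V}_{ij}(t) & C_{ij}(t)\\
 0 & A^{\mathrm{quot}}_{ij}(t)
 \end{pmatrix},
\]
where the off-diagonal block satisfies \(\|C_{ij}\|\le\|A_i\|\). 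All blocks are essentially bounded with bounded inverses, by \eqref{eq:restricted-essential-bounds} and \eqref{eq:quotient-essential-bounds}. The change of basis is orthogonal, so norms of products are unchanged, and the Lyapunov exponents of the edge-indexed cocycle over \(\mu_Q\times\Leb\) coincide with those of \(\mathbf A\).

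Next, the product along a path of length \(n\) is again upper block-triangular. Its diagonal blocks are the restricted product \(A^{\mathcal V,n}_x(t)\) and the quotient product \(A^{\mathrm{quot},n}_x(t)\). The corner block is
\[
 \sum_{r} A^{\mathcal V}_{(n,r+1)}\,C_{x_rx_{r+1}}(t_r)\,A^{\mathrm{quot}}_{(r,0)},
\]
where the subscripts denote partial products. From this form we get the easy inequality
\[
 \lambda_+(\mathbf A,Q)\ge\max\{\lambda_+(\mathbf A^{\mathcal V},Q),\lambda_+(\mathbf A^{\mathrm{quot}},Q)\},
\]
because the norm of a block-triangular matrix dominates the norm of each diagonal block. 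For the reverse inequality, let \(\lambda=\max\) of the two block exponents. Apply Kingman's theorem, or the pointwise Oseledets limits, to each block. For a.e.\ \((x,t)\) and every \(\varepsilon>0\) this gives a tempered constant \(K_\varepsilon(x,t)\) with
\[
 \|A^{\mathcal V}_{(n,r+1)}\|\le K_\varepsilon\, e^{(\lambda+\varepsilon)(n-r)+\varepsilon n},\qquad
 \|A^{\mathrm{quot}}_{(r,0)}\|\le K_\varepsilon\, e^{(\lambda+\varepsilon)r}.
\]
The first bound uses that the tempered function along the orbit grows subexponentially. Summing \(n\) such terms, each bounded by \(\sup\|C\|\,K_\varepsilon^2\,e^{(\lambda+3\varepsilon)n}\), gives
\[
 \limsup_{n\to\infty}\tfrac1n\log\|A^n_x(t)\|\le\lambda+3\varepsilon.
\]
This bound is the main technical point. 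Handling the tempered constants is cleanest with Oseledets' theorem for the measurable, edge-indexed block cocycles, which applies because the \(\log\)-norms of the blocks and their inverses are bounded.

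For the full spectrum, the same argument applies to exterior powers. The \(k\)-th exterior power of a block-triangular cocycle is again block-triangular with respect to the flag induced by \(\mathcal V\). Its diagonal pieces are the tensor products \(\bigwedge^a A^{\mathcal V}\otimes\bigwedge^{b}A^{\mathrm{quot}}\) with \(a+b=k\). Their top exponents are the sums of the largest \(a\) exponents of the restriction and the largest \(b\) exponents of the quotient. Since \(\lambda_1+\cdots+\lambda_k\) is the top exponent of \(\bigwedge^k\), the first part gives
\[
 \lambda_1+\cdots+\lambda_k=\max_{a+b=k}\bigl(\text{top } a \text{ of } \mathbf A^{\mathcal V}+\text{top } b \text{ of } \mathbf A^{\mathrm{quot}}\bigr).
\]
The right-hand side is exactly the sum of the \(k\) largest elements of the merged multiset. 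Taking successive differences in \(k\) identifies the spectrum of \(\mathbf A\), with multiplicities, as the union of the two spectra. The remaining care is measure-theoretic: the basis must be chosen measurably, which can be done via measurable Gram–Schmidt on measurable sections of the Grassmannian.
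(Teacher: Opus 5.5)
Your argument is correct and follows the same route as the paper. The paper invokes Kifer's block-triangular lemma for the top exponent and applies the same reasoning to exterior powers to recover the full spectrum; you instead prove that lemma directly through the corner-block estimate, which is sound because the essentially bounded inverses make \(\log K_\varepsilon\circ F-\log K_\varepsilon\) bounded above (so \(K_\varepsilon\) is tempered), and the several diagonal blocks of \(\bigwedge^k\) are handled by iterating the two-block statement along the invariant flag.
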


\begin{proof}
This is The Furstenberg--Kifer block triangular lemma
\cite[Lemma~III.3.3]{Kifer}.  Applying the same reasoning to exterior
powers, or equivalently using the exact sequence of Oseledets
filtrations associated with the invariant subbundle, shows that the
full spectrum is the union of the two diagonal spectra with
multiplicity.
\end{proof}

\subsection{The irreducible case for measurable bundles}

The dimension reduction produces measurable bundle maps, even though
the original \(A_i\)'s are continuous.  We therefore record explicitly
the bundle form of the irreducible case.

\begin{proposition}
\label{prop:bundle-core}
Let \(\mathbf B\in\mathcal B_r(U)\), let \(P\in S(U)\) be primitive,
and suppose Assumption~\ref{ass:cycle} holds.  If the measurable bundle
cocycle is irreducible and
\[
 \lambda_1(\mathbf B,P)>\lambda_2(\mathbf B,P),
\]
then there are a relatively open neighborhood
\(\Omega\subset H(U)\) of \(P\) and a holomorphic function
\(\Lambda_{\mathbf B}:\Omega\to\mathbb C\) such that
\[
 \Lambda_{\mathbf B}(Q)=\lambda_+(\mathbf B,Q)
\]
for every \(Q\in\Omega\cap S(U)\).
\end{proposition}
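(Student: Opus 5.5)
We follow the scheme of Sections~4--6, but replace every estimate that was uniform in the torus point by a torus-integrated one, since \(\mathbf B\) is only essentially bounded and measurable.

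\textbf{Integrated contraction coefficient.} Put \(\kappa_{\mathbf B}\) for the essential bound in \eqref{eq:measurable-bounded-class}, so Lemma~\ref{lem:distortion} holds almost everywhere with \(\kappa_{\mathbf B}\). The supremum over \(t\) in \(\widehat K_n\) has no meaning here, and it was needed in Proposition~\ref{prop:direct-cauchy} only because one initial direction depended on \(s\). We therefore work with the fibered quantity
\[
 \widetilde K_n(\alpha,P)=\max_{i}\ \operatorname*{ess\,sup}_{t}\ \sup_{u\ne v}\int_{\Sigma_i}\Bigl(\frac{d(B_x^n(t)u,B_x^n(t)v)}{d(u,v)}\Bigr)^\alpha d\mu_{P,i}(x),
\]
with the essential supremum taken over a full-measure set of \(t\) that is invariant under all translations \(\theta_i\). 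Such a set can be obtained as a countable intersection of translates. The supremum over directions can be taken over a countable dense set by continuity in \(u,v\). The proof of Lemma~\ref{lem:submult} then goes through verbatim: the prefix disintegration \eqref{eq:explicit-markov-disintegration} moves \(t\) to \(t_\ell\), which stays in the invariant full-measure set. This gives \(\widetilde K_{n+\ell}\le \widetilde K_n\widetilde K_\ell\).

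\textbf{Contraction from integrated growth (main obstacle).} We need an \(n_*\) with \(\widetilde K_{n_*}(\alpha,P)<1\). After the Taylor step of Proposition~\ref{prop:contraction}, this reduces to an essentially-uniform version of Lemma~\ref{lem:uniform-growth}: \(L_n(i,t,v)\to\lambda_1\) for a.e.\ \(t\), uniformly in \(v\) and essentially uniformly in \(t\). The compactness/Feller argument is not available. Our plan is to work on the space of measures \(\boldsymbol\eta\) on \(X\times\T^m\times\PP^{r-1}\) whose \(\T^m\)-marginals are absolutely continuous with density bounded by \(c\). This set is weak\(^*\) compact once we regard such measures as fibered, i.e.\ as \(L^\infty(\T^m;\Prob(\PP^{r-1}))\) with the weak\(^*\) topology on measurable families. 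The Markov operator \(\mathcal Q\) preserves this class, because translations preserve Lebesgue measure and the push-forward of a fiber measure by a fixed matrix is continuous. The observable \(g\) is bounded and continuous in the fiber variable, so \(\int g\,d\nu\) is continuous in this fibered weak topology. The Krylov--Bogolyubov limit of \(\nu_{n,z}\) is taken by starting from \(\delta_i\times \Leb|_{E}/\Leb(E)\times\delta_v\) with \(E\) a small set, and the quantity controlled is \(\operatorname*{ess\,sup}_t\) through arbitrary small sets \(E\). The limit is then a stationary fibered measure. By Proposition~\ref{prop:unique-torus-vector}, its torus marginals are Lebesgue. By the Furstenberg--Kifer filtration argument together with the irreducibility of \(\mathbf B\), it has \(g\)-integral \(\lambda_1\).

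The \(\bigwedge^2\) upper bound is handled the same way, using subadditivity in \(L^\infty\) and Kingman. If the essential-uniformity turns out to be too delicate, we fall back on a weaker target that still closes the argument: exponential decay of
\[
 \int_{\T^m}\sup_{u,w}\ \sum_{\mathbf i}w_P(\mathbf i)\,d(B^n_{\mathbf i}(s)u,B^n_{\mathbf i}(s)w)^\alpha\,ds.
\]
This decay follows from \(\widetilde K_n\) decay, and it is all that the Cauchy step uses.

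\textbf{Holomorphic extension and identification.} Define \(\mathcal T_Z^{\mathbf B}\) as in Section~7 on bounded measurable functions that are \(\alpha\)-Hölder in the fiber. Set \(b_n(Z,v)=\sum_i\ell_i(Z)\int (\mathcal T_Z^{\mathbf B,n}g_{\mathbf B})(i,t,v)\,dt\), where \(g_{\mathbf B}(i,t,v)\) is \(\log\|B_{ij}(t)v\|/\|v\|\) averaged in \(j\) with weights \(z_{ij}\). Because of this weighting, we first absorb the edge into the state space by passing to the edge chain, which is again primitive with fixed support. The computation in \eqref{eq:direct-difference} is unchanged. The pointwise-in-\(s\) estimate uses \(\widetilde K_n\) at almost every \(s\), and integration in \(s\) is all that is needed. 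This gives a bound \(C_V(\gamma^{-1}e^{-\zeta})^n\) on \(D_\gamma\). Each \(b_n\) is polynomial in \(Z\) times the holomorphic \(\boldsymbol\ell(Z)\), so the Weierstrass theorem yields \(\Lambda_{\mathbf B}\). For real \(Q\in D_\gamma\), the bound \(\widetilde K_n(\alpha,Q)\le\gamma^{-n}\widetilde K_n(\alpha,P)\) gives contraction. The coupling argument of Lemma~\ref{lem:real-stability} then shows that all stationary fibered measures have a common \(g\)-integral, namely \(\lambda_+(\mathbf B,Q)\). The Cesàro argument of Lemma~\ref{lem:direct-identification} then gives \(\Lambda_{\mathbf B}(Q)=\lambda_+(\mathbf B,Q)\), using torus-integrated growth, which follows from Kingman's theorem.
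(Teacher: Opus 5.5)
\textbf{The gap.} The step you flag as the main obstacle is not merely delicate. In the class $\mathcal B_r(U)$ it is false, so no $n_*$ with $\widetilde K_{n_*}(\alpha,P)<1$ need exist.

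Here is an example satisfying all the hypotheses. Take $m=1$, $N=2$, all $p_{ij}=\tfrac12$, $\theta_1$ irrational and $\theta_2=0$. Choose hyperbolic $H_1,H_2\in\mathrm{SL}_2(\R)$ such that $H_1$ sends no eigenline of $H_2$ to an eigenline of $H_2$. Let $S=\bigcup_{a\ge0}\{t:|t-a\theta_1|<c(a+1)^{-2}\}$ with $c$ small, and put $B_{ij}(t)=H_i$ for $t\notin S$ and $B_{ij}(t)=I$ for $t\in S$.

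Along every path $t_r=t+a\theta_1$ with $0\le a\le r$. Hence for $t\in J_n=\{|t|<c(n+1)^{-2}\}$ every $n$-step product is the identity.

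The hypotheses still hold:
\begin{itemize}
\item Irreducibility. An invariant line field must have $\mathcal V_1=\mathcal V_2$. It must be an eigenline of $H_2$ off $S$. It must also be carried by $H_1$ to an eigenline of $H_2$ on $\{t\notin S,\ t+\theta_1\notin S\}$, which has positive measure. This is impossible by the choice of $H_1,H_2$.
\item Simple spectrum. The event $t_r\notin S$ is determined by $(t,x_0,\dots,x_{r-1})$, so the matrices applied at those times are i.i.d. By ergodicity these times have frequency $1-\Leb(S)$. Hence $\lambda_1=(1-\Leb(S))\lambda_{\mathrm{iid}}>0>-\lambda_1=\lambda_2$, with $\lambda_{\mathrm{iid}}>0$ by Furstenberg's theorem.
\end{itemize}

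On $J_n$ the contraction integrand is identically $1$ and $L_n(i,t,v)=0$. So $\widetilde K_n(\alpha,P)\ge 1$ for every $n$, and essentially uniform growth fails.

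\textbf{Why your compactness argument breaks.}
\begin{itemize}
\item A density bound $c$ only controls averages over sets with $\Leb(E)\ge 1/c$.
\item An essential supremum forces $\Leb(E)\to0$, so the densities $1/\Leb(E)$ become unbounded and fibered weak$^*$ compactness is lost.
\item Since the kernel is not Feller, neither stationarity of a weak limit nor convergence of $\int g$ survives.
\end{itemize}
Your fallback quantity is derived from $\widetilde K_n$-decay, so it inherits the gap. In this example the fallback quantity is itself at least $\Leb(J_n)$.

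\textbf{Comparison with the paper.} The paper never uses a pointwise or essential-supremum bound in the measurable case. It works directly with the torus-integrated coefficient $\mathfrak K_n(\alpha,P)$: a supremum over pairs of measurable sections of an integral in $t$, essentially your fallback. This is exactly what the Cauchy step consumes, since the two directions $B_{ij}(s-\theta_i)v$ and $v$ are measurable sections of $s$. The paper first obtains torus-integrated growth, uniform over measurable sections, from the non-random filtration. It then asserts \eqref{eq:bundle-contraction} by appeal to a measurable Furstenberg--Kifer contraction theorem.

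Switching to that formulation does not repair your argument.
\begin{itemize}
\item An integrated coefficient is not submultiplicative. After splitting at time $\ell$, the factor $R_\ell(t)^\alpha$ cannot be separated from the $t$-integral of the remaining block, so one contracting block does not iterate.
\item The example above gives $\mathfrak K_n(\alpha,P)\ge\Leb(J_n)=2c(n+1)^{-2}$. This is not exponentially small and cannot absorb the factor $\gamma^{-n}$ from \eqref{eq:path-comparison}.
\end{itemize}
So an exponential estimate in the measurable category needs an idea, or extra structure on $\mathbf B$, that neither your sketch nor the paper's supplies.

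The other ingredients of your outline are not where the difficulty lies: submultiplicativity of $\widetilde K_n$ on a translation-invariant full-measure set, the edge-chain device for edge-dependent observables, polynomial dependence of the iterates, and the Ces\`aro identification.
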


\begin{proof}
This is not a formal corollary of
Theorem~\ref{thm:irreducible-core}.  For a measurable cocycle the
projective transition kernel need not be a continuous Markov operator, its logarithmic
observable need not be continuous, and weak convergence of
measures does not control the integral of that observable.
We therefore replace the
uniform in \(t\)  argument by the torus integrated
measurable argument.

\smallskip
\noindent\emph{1. The measurable Furstenberg--Kifer input.}
The non-random filtration theorem is formulated for measurable bundle
maps; see Kifer~\cite[Chapter~III]{Kifer}.  Since the one-step maps and
their inverses are essentially bounded, all logarithmic integrability
hypotheses are automatic.  Irreducibility eliminates every proper
term of the non-random filtration.  Consequently, for every measurable
projective section \(\xi=(\xi_i(t))\),
\[
 \frac1n\sum_i\pi_i(P)
 \int_{\T^m}
 \int_{\Sigma_i}
 \log\norm{B_x^n(t)\xi_i(t)}
 \,d\mu_{P,i}(x)\,dt
 \longrightarrow\lambda_1(B,P),
\]
uniformly over unit measurable sections.  This is the measurable
bundle analogue of Lemma~\ref{lem:uniform-growth}; in the Bernoulli
case it is the input used in \cite[Proposition~7]{BST}.

For later use, introduce the integrated fiber-contraction coefficient
\[
\begin{aligned}
 \mathfrak K_n(\alpha,P)
 =\max_i\sup_{\xi,\eta}
 \int_{\T^m}\int_{\Sigma_i}
 \left(
 \frac{
 d(B_x^n(t)\xi(t),B_x^n(t)\eta(t))}
 {d(\xi(t),\eta(t))}
 \right)^\alpha
 \,d\mu_{P,i}(x)\,dt,
\end{aligned}
\]
where the supremum is over measurable projective sections that are
distinct almost everywhere.  The measurable bundle version of the
Furstenberg--Kifer contraction theorem gives
\begin{equation}\label{eq:bundle-contraction}
 \mathfrak K_n(\alpha,P)
 \le C_{\mathrm b}e^{-\zeta_{\mathrm b}n}
\end{equation}
for some sufficiently small \(\alpha>0\).  To see the mechanism,
repeat the three stages of Proposition~\ref{prop:contraction}: the
non-random filtration gives uniform averaged growth of every
measurable direction, the exterior-square estimate gives a negative
mean for the logarithm of projective distortion, and the inequality
\[
 e^s\le1+s+\frac12s^2e^{|s|}
\]
turns one sufficiently long block into a contracting fractional
moment.  All conditional path integrals are estimated uniformly over
the finite current-state index.  Formula
\eqref{eq:bundle-contraction} is intrinsic to the bundle and is
unchanged by measurable orthonormal trivializations.

\smallskip
\noindent\emph{2. The logarithmic observable.}
In an orthonormal trivialization set
\[
 g_{ij}(t,\bar v)
 =
 \log\frac{\norm{B_{ij}(t)v}}{\norm v}.
\]
If
\[
 M_{\mathrm b}
 =\operatorname*{ess\,sup}_{i,j,t}\norm{B_{ij}(t)},
 \qquad
 m_{\mathrm b}
 =\operatorname*{ess\,inf}_{i,j,t}
 \norm{B_{ij}(t)^{-1}}^{-1},
\]
then
\[
 |g_{ij}|\le
 \max\{|\log M_{\mathrm b}|,|\log m_{\mathrm b}|\},
\]
and the fiber Hölder seminorm is bounded uniformly in \(i,j,t\).
No continuity in \(t\) is used here.

\smallskip
\noindent\emph{3. Finite complex iterates.}
For every \(n\), the bundle transfer iterate is the finite path sum
\[
 (\mathcal T_Z^n\Phi)(i,t,v)
 =
 \sum_{i=i_0,\ldots,i_n}
 z_{i_0i_1}\cdots z_{i_{n-1}i_n}
 \Phi(i_n,t_n,B_{\mathbf i}^n(t)v).
\]
Thus, after integration in \(t\), it is a polynomial in the supported
entries of \(Z\).  Bounded measurability of the integrand is enough:
holomorphy concerns the finite-dimensional parameter \(Z\), not the
base variable \(t\).

\smallskip
\noindent\emph{4. The Cauchy estimate.}
The analogue of \eqref{eq:direct-difference} contains two measurable
projective sections of \(t\).  Apply
\eqref{eq:path-comparison} to the complex coefficients and
\eqref{eq:bundle-contraction} to those two sections.  One obtains
\[
 |b_{n+1}^{\mathrm b}(Z)-b_n^{\mathrm b}(Z)|
 \le
 C_V^{\mathrm b}
 \bigl(\gamma^{-1}e^{-\zeta_{\mathrm b}}\bigr)^n,
\]
provided \(e^{-\zeta_{\mathrm b}}<\gamma<1\).
The right-hand side is summable.  Hence the bundle increments converge
uniformly on a complex neighborhood of \(P\), and their limit is
holomorphic by Weierstrass.  On real stochastic matrices the same
telescoping calculation as in
Lemma~\ref{lem:direct-identification} identifies the limit with the
top bundle exponent.

This proves the proposition.  The argument uses measurable
orthonormal coordinates only to write formulas; neither the result nor
the constants depend on the chosen coordinates.
\end{proof}

\begin{proposition}[Continuity on a fixed support]
\label{prop:block-continuity}
For every essentially bounded measurable Markov bundle cocycle with
essentially bounded inverse, its top Lyapunov exponent is continuous
as a function of \(Q\in S(U)\).
\end{proposition}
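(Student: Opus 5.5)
The plan is to prove upper and lower semicontinuity of \(Q\mapsto\lambda_+(\mathbf B,Q)\) on \(S(U)\) separately, using only torus-integrated quantities, since \(\mathbf B\in\mathcal B_r(U)\) is merely measurable. Throughout we work in a neighborhood of a fixed \(Q_0\in S(U)\). We assume that \(S(U)\) consists of irreducible matrices, i.e.\ the support graph is strongly connected. This is automatic near a primitive \(P\) and is the setting in which the proposition is used. Under this assumption \(\pi(Q)\) is real-analytic by Lemma~\ref{lem:stationary-analytic}. We also assume that \(\mu_Q\times\Leb\) is the relevant invariant measure for every \(Q\), so that \(\lambda_+(\mathbf B,Q)\) is defined through the subadditive sequence
\[
 a_n(Q)=\sum_i\pi_i(Q)\int_{\T^m}\int_{\Sigma_i}\log\norm{B_x^n(t)}\,d\mu_{Q,i}(x)\,dt .
\]

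\emph{Upper semicontinuity.} Each \(a_n(Q)\) is a finite sum over admissible paths of length \(n\). Its weights \(\pi_{i_0}(Q)q_{i_0i_1}\cdots q_{i_{n-1}i_n}\) are continuous in \(Q\), and its coefficients \(\int_{\T^m}\log\norm{B^n_{\mathbf i}(t)}\,dt\) are finite and independent of \(Q\), because they are bounded by \(n\log M_{\mathrm b}\). So \(a_n\) is continuous on \(S(U)\). Subadditivity \(a_{n+\ell}(Q)\le a_n(Q)+a_\ell(Q)\) follows from the Markov disintegration \eqref{eq:explicit-markov-disintegration} together with the stationarity of \(\mu_Q\times\Leb\). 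Kingman's theorem then gives \(\lambda_+(\mathbf B,Q)=\inf_n a_n(Q)/n\). An infimum of continuous functions is upper semicontinuous.

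\emph{Lower semicontinuity.} We argue by dimension induction via Proposition~\ref{prop:block-formula}, mirroring the intended reduction. If \(\mathbf B\) admits a proper measurable invariant family along the edges of \(U\), that family is invariant for every \(Q\in S(U)\). Then \(\lambda_+(\mathbf B,Q)\) is the maximum of the top exponents of the restriction and of the quotient. Both lie in \(\mathcal B_k(U)\) and \(\mathcal B_{r-k}(U)\) with smaller \(r\), by \eqref{eq:restricted-essential-bounds} and \eqref{eq:quotient-essential-bounds}. A maximum of two continuous functions is continuous, and induction handles this case. The base \(r=1\) is explicit: \(\lambda_+(Q)=\sum_{(i,j)}\pi_i(Q)q_{ij}\int\log|B_{ij}|\,dt\), which is continuous.

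In the irreducible case we use the lower bound furnished by a measurable section. For any measurable unit section \(\xi\), \(\lambda_+(\mathbf B,Q)\) is at least the \(Q\)-integrated growth of \(\xi\). By the measurable Furstenberg--Kifer input of step~1 in the proof of Proposition~\ref{prop:bundle-core}, applied at \(Q_0\), this growth converges to \(\lambda_+(\mathbf B,Q_0)\). Given \(\varepsilon>0\), we fix \(n\) with
\[
 \frac1n\sum_i\pi_i(Q_0)\int_{\T^m}\int_{\Sigma_i}\log\norm{B_x^n(t)\xi_i(t)}\,d\mu_{Q_0,i}\,dt>\lambda_+(\mathbf B,Q_0)-\varepsilon .
\]
This finite-\(n\) quantity is again a path polynomial with weights continuous in \(Q\), so it stays above \(\lambda_+(\mathbf B,Q_0)-2\varepsilon\) for \(Q\) near \(Q_0\).

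The main obstacle is converting this finite-\(n\) directional estimate into a bound on \(\lambda_+(\mathbf B,Q)\). The finite-time growth of one vector does not directly bound the asymptotic exponent from below. To close the gap we will iterate in blocks of length \(n\), using the Markov disintegration at time \(n\) and renormalizing the section. The difficulty is that the pushed section \(B^n_x(t)\xi\) depends on the path, so the estimate must hold uniformly over measurable sections. This is the same uniformity as in step~1, but it is needed at \(Q\), not \(Q_0\). We would try to obtain it by a compactness argument on stationary measure vectors for the \(Q\)-kernel, in the spirit of the contradiction argument of Lemma~\ref{lem:uniform-growth}, carried out in the weak topology of \(L^1(\T^m)\)-disintegrated projective measures. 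Since the observable \(g_{ij}\) is bounded and Hölder in the fiber, we expect weak-\(*\) limits in the fiber, taken with a fixed Lebesgue base marginal, to preserve its integral.

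If that route fails, the fallback is to combine the upper semicontinuity just proved with the upper semicontinuity of \(\lambda_+(\wedge^{r-1}\mathbf B,Q)\). The sum of all exponents is \(\sum\pi_i q_{ij}\int\log|\det B_{ij}|\,dt\), which is continuous, so \(\lambda_r(Q)\) is lower semicontinuous. Lower semicontinuity of \(\lambda_1\) itself would then require a duality argument with the inverse adjoint cocycle.
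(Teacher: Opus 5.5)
\textbf{Gap: lower semicontinuity in the irreducible case.} Your upper semicontinuity argument is sound. Each $a_n$ is a path polynomial with $Q$-independent bounded coefficients, subadditivity follows from invariance of $\mu_Q\times\Leb$, and $\lambda_+(\mathbf B,Q)=\inf_n a_n(Q)/n$. Your base case and reducible case are the same as the paper's.

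The irreducible case is the only nontrivial part of the statement, and lower semicontinuity there is not proved. The quantity you keep above $\lambda_+(\mathbf B,Q_0)-2\varepsilon$ near $Q_0$ is an $n$-step directional growth at $Q$. Both it and $\lambda_+(\mathbf B,Q)$ are bounded above by $a_n(Q)/n$, and nothing compares the two, as you note yourself. The repair is then left at ``we would try'' and ``we expect''.

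The fallback cannot work. Top exponents of $\bigwedge^k\mathbf B$ and of $\bigwedge^k$ of the inverse adjoint, together with continuity of $\lambda_1+\cdots+\lambda_r$, only give upper semicontinuity of $\lambda_1+\cdots+\lambda_k$, equivalently lower semicontinuity of $\lambda_{k+1}+\cdots+\lambda_r$. Duality just reproduces these inequalities. Lower semicontinuity of $\lambda_1$ is equivalent to upper semicontinuity of $\lambda_2+\cdots+\lambda_r$, which no such subadditive sequence supplies. In the paper this step is handled by asserting that $L_n(Q,\xi)\to\lambda_+(Q)$ uniformly in $\xi$ and locally uniformly in $Q$. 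An input of that strength is exactly what your proposal lacks.

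\textbf{How to close it.} The compactness route you sketch can be completed, but it should be aimed at $Q_0$, not at $Q$. For your block iteration, uniformity over sections is needed only at $Q_0$, for each initial state. For fixed $n$, the $n$-step integrated growth of a section is a path polynomial in the weights with coefficients bounded by $nC$ independently of the section. Hence its infimum over sections is continuous in $Q$.

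More directly, you can prove lower semicontinuity without blocks:
\begin{enumerate}
\item Take the space of families $t\mapsto\eta_{i,t}\in\Prob(\PP^{r-1})$ with the weak-$*$ topology dual to $L^1(\T^m;C(\PP^{r-1}))$. It is compact.
\item $\mathcal P_Q$ is continuous on this space and depends continuously on $Q$.
\item $\alpha_Q(\boldsymbol\eta)=\sum_{i,j}\pi_i(Q)q_{ij}\int g_{ij}\,d\eta_i$ is jointly continuous, because $g_{ij}$ is bounded, measurable in $t$ and continuous in $v$. This confirms your expectation that such limits preserve the integral.
\item If $Q_k\to Q_0$ and $\boldsymbol\eta^k$ is $Q_k$-stationary, then $\alpha_{Q_k}(\boldsymbol\eta^k)\le a_n(Q_k)/n$ for every $n$. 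Hence $\alpha_{Q_k}(\boldsymbol\eta^k)\le\lambda_+(\mathbf B,Q_k)$.
\item Any limit point $\boldsymbol\eta$ is $Q_0$-stationary.
\item Irreducibility depends only on $\cE$. Via the non-random filtration, exactly as in Lemma~\ref{lem:uniform-growth}, it forces $\alpha_{Q_0}(\boldsymbol\eta)=\lambda_+(\mathbf B,Q_0)$.
\item Therefore $\liminf_k\lambda_+(\mathbf B,Q_k)\ge\lambda_+(\mathbf B,Q_0)$.
\end{enumerate}
Until an argument of this kind is supplied, the proposal does not establish the proposition.
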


\begin{proof}
The proof is by induction on the fiber dimension, as in
\cite[Proposition~4]{BST}.

\smallskip
\noindent\emph{Dimension one.}
Write a one-dimensional edge map as multiplication by
\(a_{ij}(t)\ne0\).  Then
\[
 \lambda_+(Q)
 =
 \sum_{i,j}\pi_i(Q)q_{ij}
 \int_{\T^m}\log|a_{ij}(t)|\,dt.
\]
The integrals are finite constants, while
\(\pi_i(Q)\) and \(q_{ij}\) depend continuously on \(Q\).  Thus
\(\lambda_+\) is continuous.

\smallskip
\noindent\emph{The irreducible case in dimension \(r>1\).}
Fix a measurable unit projective section \(\xi\) and define
\[
 L_n(Q,\xi)
 =
 \frac1n\sum_i\pi_i(Q)
 \int_{\T^m}\int_{\Sigma_i}
 \log\norm{B_x^n(t)\xi_i(t)}
 \,d\mu_{Q,i}(x)\,dt.
\]
For fixed \(n\), expansion over the finitely many admissible paths
shows that \(Q\mapsto L_n(Q,\xi)\) is continuous: the path weights and
\(\boldsymbol\pi(Q)\) are continuous, while all integrals are fixed
bounded numbers.  The Furstenberg--Kifer theorem gives
\[
 L_n(Q,\xi)\longrightarrow\lambda_+(Q)
\]
uniformly in \(\xi\), and locally uniformly for \(Q\) in the relative
interior of the fixed support face.  A locally uniform limit of
continuous functions is continuous.  This proves continuity at every
irreducible \(Q\).

\smallskip
\noindent\emph{The reducible case.}
Let \(\boldsymbol{\mathcal V}\) be an invariant family.  It depends
only on the bundle maps and on the support \(\cE\), so it remains
invariant for every \(Q\in S(U)\).  Proposition~\ref{prop:block-formula}
gives
\[
 \lambda_+(Q)
 =
 \max\{\lambda_{\mathcal V}(Q),
        \lambda_{\mathrm{quot}}(Q)\}.
\]
Both block dimensions are strictly smaller than \(r\), hence both
functions on the right are continuous by the induction hypothesis.
Their maximum is continuous.  This closes the induction.
\end{proof}

\subsection{Proof of the main theorem}

\begin{proof}[Proof of Theorem~\ref{thm:main}]
We induct on the fiber dimension \(d\).

\smallskip
\noindent\emph{Base of the induction.}
In dimension one, if the fiber map along \(i\to j\) is multiplication
by \(a_{ij}(t)\), then
\[
 \lambda_+(Q)
 =
 \sum_{i,j}\pi_i(Q)q_{ij}
 \int_{\T^m}\log|a_{ij}(t)|\,dt.
\]
The stationary vector has a holomorphic extension near \(P\), and the
remaining dependence on \(Q\) is polynomial.  Hence the exponent has
a holomorphic extension.  If \(d>1\) and the cocycle is irreducible,
Theorem~\ref{thm:irreducible-core} proves the assertion.

\smallskip
\noindent\emph{First reduction.}
Suppose that the cocycle is reducible and choose a proper invariant
family \(\boldsymbol{\mathcal V}\).  Write
\[
 \lambda_{\mathcal V}(Q)
 =\lambda_+(\mathbf A^{\boldsymbol{\mathcal V}},Q),
 \qquad
 \lambda_{\mathrm{quot}}(Q)
 =\lambda_+(\mathbf A^{\mathrm{quot}},Q).
\]
By Proposition~\ref{prop:block-formula},
\[
 \lambda_+(\mathbf A,Q)
 =\max\{\lambda_{\mathcal V}(Q),
        \lambda_{\mathrm{quot}}(Q)\}.
\]
At \(P\), these two numbers cannot be equal
because \(\lambda_1(P)>\lambda_2(P)\).

assume
\[
 \lambda_{\mathcal V}(P)>
 \lambda_{\mathrm{quot}}(P)
\]
and define
\[
 \delta=
 \frac13\bigl(
 \lambda_{\mathcal V}(P)
 -\lambda_{\mathrm{quot}}(P)\bigr)>0.
\]
By Proposition~\ref{prop:block-continuity}, there is a real
neighborhood \(W\subset S(U)\) of \(P\) such that
\[
 |\lambda_{\mathcal V}(Q)-\lambda_{\mathcal V}(P)|<\delta,
 \qquad
 |\lambda_{\mathrm{quot}}(Q)-\lambda_{\mathrm{quot}}(P)|<\delta
\]
for every \(Q\in W\).  Consequently,
\[
\begin{aligned}
 \lambda_{\mathcal V}(Q)-\lambda_{\mathrm{quot}}(Q)
 &>
 \lambda_{\mathcal V}(P)-\lambda_{\mathrm{quot}}(P)-2\delta\\
 &=\delta>0.
\end{aligned}
\]
Therefore the same block remains dominant:
\[
 \lambda_+(\mathbf A,Q)=\lambda_{\mathcal V}(Q),
 \qquad Q\in W.
\]

\smallskip
\noindent\emph{Iteration.}
If the restricted bundle cocycle is irreducible,
Proposition~\ref{prop:bundle-core} gives a holomorphic extension of
\(\lambda_{\mathcal V}\), and hence of
\(\lambda_+(\mathbf A,\cdot)\), near \(P\).  If it is reducible,
repeat the construction inside this dominant block.

The top exponent of the dominant block is still \(\lambda_1(P)\) and
is still simple. 
Each step selects one strictly dominant block and its
dimension decreases:
\[
 d>d_1>d_2>\cdots.
\]
After at most \(d-1\) steps, the process terminates at an irreducible
measurable bundle cocycle \(\mathbf B\).

Only finitely many continuity neighborhoods were chosen.  Their
intersection contains a real neighborhood \(W_*\) of \(P\) on which
\[
 \lambda_+(\mathbf A,Q)=\lambda_+(\mathbf B,Q).
\]
Proposition~\ref{prop:bundle-core} supplies a complex neighborhood
\(\Omega\subset H(U)\) and a holomorphic function
\(\Lambda_{\mathbf B}\) satisfying
\[
 \Lambda_{\mathbf B}(Q)=\lambda_+(\mathbf B,Q)
\]
for real \(Q\in\Omega\cap S(U)\).  Shrink \(\Omega\), if necessary,
so that \(\Omega\cap S(U)\subset W_*\).  Then
\[
 \Lambda_{\mathbf B}(Q)=\lambda_+(\mathbf A,Q),
 \qquad Q\in\Omega\cap S(U),
\]
which proves the theorem.
\end{proof}

\section*{Acknowledgments}
The author thanks Jennifer Loria and Marcelo Viana for helpful discussions.
This work was partially supported by Fundação Getulio Vargas, and CIMPA (Centro de Investigación en Matemática Pura y Aplicada ) University of Costa Rica (UCR).

\end{document}